\providecommand{\tabularnewline}{\\}
\numberwithin{equation}{section}
\numberwithin{figure}{section}
  \theoremstyle{plain}
  \newtheorem{thm}{Theorem}[section]
  \theoremstyle{plain}
  \newtheorem{cor}[thm]{Corollary}
  \theoremstyle{plain}
  \newtheorem{lem}[thm]{Lemma}
  \theoremstyle{definition}
  \theoremstyle{remark}
  \newtheorem{rem}[thm]{Remark}
  \theoremstyle{remark}
  \newtheorem{notation}[thm]{Notation}
  \theoremstyle{plain}
  \newtheorem{prop}[thm]{Proposition}
 \theoremstyle{definition}
  \newtheorem{example}[thm]{Example}
  \theoremstyle{remark}
  \newtheorem*{acknowledgement*}{Acknowledgement}
\newcounter{casectr}
\newenvironment{caseenv}
{\begin{list}{{\itshape\ Case} \arabic{casectr}.}{%
 \setlength{\leftmargin}{\labelwidth}
 \addtolength{\leftmargin}{\parskip}
 \setlength{\itemindent}{\listparindent}
 \setlength{\itemsep}{\medskipamount}
 \setlength{\topsep}{\itemsep}}
 \setcounter{casectr}{0}
 \usecounter{casectr}}
{\end{list}}
\newcommand{\overseteqnum }[2]{\overset{\makebox{\tiny \eqref{#1}}}{#2}}
\newcommand{\Inn}[1]{\mathrm{Inn}(#1)}
\newcommand{\trid}{\triangleright}
\newcommand{\obl }{\lfloor }
\newcommand{\obr }{\rfloor }
\newcommand{\orbit}{\mathcal{O}}
\newcommand{\ndN }{\mathbb{N}_0}
\newcommand{\ndZ }{\mathbb{Z}}
\newcommand{\ndQ }{\mathbb{Q}}
\newcommand{\ndC }{\mathbb{C}}
\newcommand{\fie }{\Bbbk }
\newcommand{\NA}{\mathfrak{B}}
\newcommand{\lspan}[1][\fie]{\mathrm{span}_{#1}}
\begin{document}

\title{Nichols algebras of group type with many quadratic relations}
\author{M.~Gra{\~n}a}
\address{M.~Gra{\~n}a, Departamento de matem\'atica, FCEyN, Universidad de Buenos Aires, Pabell\'on 1, Ciudad Universitaria (1428), Buenos Aires, Argentina}
\email{matiasg@dm.uba.ar}

\author{I.~Heckenberger}
\address{I.~Heckenberger, Philipps-Universit\"at Marburg, FB Mathematik und
Informatik, Hans-Meerwein-Stra\ss e, 35032 Marburg, Germany}
\email{heckenberger@mathematik.uni-marburg.de}

\author{L.~Vendramin}
\address{L.~Vendramin, Departamento de matem\'atica, FCEyN, Universidad de Buenos Aires, Pabell\'on 1, Ciudad Universitaria (1428), Buenos Aires, Argentina}
\address{Instituto de Ciencias, Universidad de Gral. Sarmiento, J.M. Gutierrez 1150, Los
Polvorines (1653), Buenos Aires, Argentina}
\email{lvendramin@dm.uba.ar}

\begin{abstract}
  We classify Nichols algebras of irreducible Yetter-Drinfeld modules over
  nonabelian groups satisfying an inequality for the dimension
  of the homogeneous subspace of degree two.
  All such Nichols algebras are finite-dimensional,
  and all known finite-dimensional Nichols algebras of nonabelian group type
  appear in the result of our classification. We find a new finite-dimensional
  Nichols algebra over fields of characteristic two.
\end{abstract}

\maketitle

\section{Introduction}

In many mathematical theories the classification of fundamental objects is a
central and difficult problem. Successful classifications like the one of
finite Coxeter groups, finite simple groups, and finite-dimensional
simple Lie algebras serve as archetype for other theories. Often a finiteness
assumption is imposed to achieve a reasonable result.

The theory of Hopf algebras covers classes of examples like group algebras,
enveloping algebras of Lie algebras, regular functions on finite and
affine algebraic
groups, quantum groups and others. Additionally, many examples
admit various kinds of deformations. With the discovery of quantum groups by
Drinfeld and Jimbo \cite{MR934283, MR841713} it became clear that non-commutative and
non-cocommutative Hopf algebras are natural objects playing an important role
in many mathematical and physical theories. The large variety of examples
forces any classification ansatz to restrict oneself to a special class.

A Hopf algebra is \textit{pointed} if all irreducible subcomodules are
one-dimensional. Examples of pointed Hopf algebras are
universal enveloping algebras of Lie algebras and restricted
Lie algebras and quantized enveloping algebras. The first non-commutative
non-cocommutative examples have been the Sweedler Hopf algebra \cite{MR0252485}
and the Taft Hopf algebra \cite{MR0286868}. A systematic study of pointed Hopf algebras was
started by Nichols \cite{MR0506406}. The classification enjoyed significant progress
with the invention of the lifting method by Andruskiewitsch and Schneider \cite{MR1659895}.
The idea of the method is to understand first the coradically graded Hopf
algebras and then to lift them to non-graded Hopf algebras.
Coradically graded pointed Hopf algebras are biproducts (bosonizations) of a group
algebra and a graded braided Hopf algebra with primitives concentrated in
degree one.
The latter is a \textit{Nichols
algebra} if it is generated by the elements of degree one.
The subspace of primitives is a \textit{Yetter-Drinfeld module} over the group
algebra
and determines the Nichols algebra uniquely.
Based on the lifting method, Andruskiewitsch and Schneider \cite{as-ann} classified
finite-dimensional pointed Hopf algebras with abelian coradical
under the assumption that the order of any group-like element is
relatively prime to $2$, $3$, $5$, and $7$.
The proof of this remarkable result makes heavily use of the classification of
Nichols algebras of diagonal type \cite{MR2207786, MR2462836}.

The next natural step in the classification of pointed Hopf algebras is to ask
for finite-dimensional Nichols algebras over nonabelian groups.
This problem turns out to be extremely difficult. So far only a small number
of examples appeared in the literature, and there is no obvious way to
describe them in a unified way. Fomin and Kirillov \cite{MR1667680}
studied Nichols algebras over symmetric groups and they used them to
analyze the geometry of Schubert cells. These and other examples have been
found independently by Milinski and Schneider \cite{MR1800714}. Andruskiewitsch
and the first author investigated Nichols algebras in terms of racks and
detected yet another examples \cite{MR1994219}. At the moment, the calculation of the
Hilbert series of these algebras requires the use of computer algebra.
In general,
the lack of knowledge about the general structure of Nichols algebras
does not allow to decide if a given Nichols algebra is
finite-dimensional.

In this paper we introduce a condition on the Nichols algebra in form of an
inequality concerning the dimension of the degree two subspace.
Our goal is to classify Nichols algebras of irreducible Yetter-Drinfeld
modules over nonabelian groups under this assumption. Our main result is Theorem \ref{thm:YD} 
which states that all such Nichols algebras are finite-dimensional, and all known examples
appear this way. We recall the list of examples in Table \ref{tab:nichols}. 
Our approach is very general and does not need any assumption
on the base field. It relies heavily on our Theorem \ref{thm:racks} which is a classification of racks 
satisfying certain inequality. With Theorem \ref{thm:diff_char} we present a method 
to compare Nichols algebras defined over
different fields. By consequent application of our theory it was possible to
find a substantially new example of a finite-dimensional
Nichols algebra over fields of
characteristic $2$, see Proposition \ref{pro:Tchar2}.

\begin{table}[ht]
\caption{Finite-dimensional Nichols algebras}
\begin{center}
\label{tab:nichols}
\begin{tabular}{|r|r|l|l|}
\hline 
Rank & Dimension & Hilbert series & Remark \tabularnewline
\hline 
3 & $12$ & $(2)^2_t (3)_t$ & Prop. \ref{pro:D3_main}\\
\hline
4 & $36$ & $(2)^2_t (3)^2_t$ & Prop. \ref{pro:T_main}(3), $\mathrm{char}\,\fie=2$\\
\hline
4 & $72$ & $(2)^2_t (3)_t (6)_{t}$ & Prop. \ref{pro:T_main}(2), $\mathrm{char}\,\fie\ne2$ \\
\hline
5 & $1280$ & $(4)^4_t (5)_t$ & Prop. \ref{pro:aff_main}(2) \\
\hline
5 & $1280$ & $(4)^4_t (5)_t$ & Prop. \ref{pro:aff_main}(2)\\
\hline
6 & $576$ & $(2)^2_t (3)^2_t (4)^2_t$ & Prop. \ref{pro:A_main} \\
\hline
6 & $576$ & $(2)^2_t (3)^2_t (4)^2_t$ & Prop. \ref{pro:B_main} \\
\hline
7 & $326592$ & $(6)^6_{t} (7)_t$ & Prop. \ref{pro:aff_main}(3)\\
\hline
7 & $326592$ & $(6)^6_{t} (7)_t$ & Prop. \ref{pro:aff_main}(3)\\
\hline
10 & $8294400$ & $(4)^4_t (5)^2_t (6)^4_t$ & Prop. \ref{pro:C_main}\\
\hline
10 & $8294400$ & $(4)^4_t (5)^2_t (6)^4_t$ & Prop. \ref{pro:C_main}\\
\hline
\end{tabular}
\par\end{center}
\end{table}

\section{Racks}

\subsection{Generalities}

A \textit{rack} is a pair $(X,\trid)$, where $X$ is a non-empty
set and $\trid:X\times X\to X$ is a map (considered as a binary operation on
$X$) such that
\begin{enumerate}
  \item[(R1)] \label{en:rack1}
  the map $\varphi_i:X\to X$, where $x\mapsto i\trid x$, is bijective
  for all $i\in X$, and
  \item[(R2)] \label{en:rack2}
    $i\trid(j\trid k)=(i\trid j)\trid(i\trid k)$
    for all $i,j,k\in X$.
\end{enumerate}
A rack $(X,\trid )$, or shortly $X$,
is a \textit{quandle} if $i\trid i=i$ for all $i\in X$.

The \textit{inner group} of a rack $X$ is the group generated by 
the permutations
$\varphi_i$ of $X$, where $i\in X$. We write $\Inn X$
for the inner group of $X$.
Axiom~(R2) implies that 
\begin{gather}
\label{eq:conjugation}
\varphi_{i\trid j}=\varphi_i\varphi_j\varphi_i^{-1}
\end{gather}
for all $i,j\in X$.
More generally, the following holds.
\begin{lem}
\label{lem:conjugation}
Let $X$ be a rack and
let $k\in\ndN $ and $i_1,i_2,\dots ,i_k,j,l\in X$
such that $\varphi_{i_1}\varphi_{i_2}\cdots\varphi_{i_k}(j)=l$.
Then $\varphi_{i_1}\varphi_{i_2}\cdots\varphi_{i_k}\varphi_j\varphi_{i_k}^{-1}\cdots\varphi_{i_2}^{-1}\varphi_{i_1}^{-1}=\varphi_l$.
\end{lem}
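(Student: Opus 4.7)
The plan is to proceed by induction on $k$, the length of the word of permutations.

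For the base case $k=0$, the hypothesis reads $j=l$ and the claimed equality is $\varphi_j=\varphi_l$, which is trivial. For $k=1$, the hypothesis $\varphi_{i_1}(j)=l$ means $i_1\trid j=l$, and then equation \eqref{eq:conjugation} yields $\varphi_l=\varphi_{i_1\trid j}=\varphi_{i_1}\varphi_j\varphi_{i_1}^{-1}$, which is exactly what we want.

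For the inductive step, suppose the lemma holds for strings of length $k-1$ and let $\varphi_{i_1}\varphi_{i_2}\cdots\varphi_{i_k}(j)=l$. Set
\[
m:=\varphi_{i_2}\cdots\varphi_{i_k}(j),
\]
so that $\varphi_{i_1}(m)=l$. Applying the inductive hypothesis to the shorter string $\varphi_{i_2}\cdots\varphi_{i_k}$ together with the elements $j$ and $m$, we obtain
\[
\varphi_{i_2}\cdots\varphi_{i_k}\varphi_j\varphi_{i_k}^{-1}\cdots\varphi_{i_2}^{-1}=\varphi_m.
\]
Conjugating this identity by $\varphi_{i_1}$ and using the base case applied to $\varphi_{i_1}(m)=l$, we get
\[
\varphi_{i_1}\varphi_{i_2}\cdots\varphi_{i_k}\varphi_j\varphi_{i_k}^{-1}\cdots\varphi_{i_2}^{-1}\varphi_{i_1}^{-1}=\varphi_{i_1}\varphi_m\varphi_{i_1}^{-1}=\varphi_l,
\]
which finishes the induction.

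There is no genuine obstacle here: once one recognises that \eqref{eq:conjugation} is precisely the $k=1$ case of the lemma, the inductive step is just iterated conjugation. The only small care needed is to strip off one $\varphi_{i_1}$ at a time rather than trying to conjugate by the whole product at once, so that the inductive hypothesis applies to a string of length exactly $k-1$.
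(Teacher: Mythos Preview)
Your proof is correct and follows exactly the approach indicated in the paper: induction on $k$ using Equation~\eqref{eq:conjugation} for the base case and the conjugation step. The paper merely records this as ``By induction on $k$ using Equation~\eqref{eq:conjugation}'', and your write-up spells out precisely that argument.
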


\begin{proof}
By induction on $k$ using Equation \eqref{eq:conjugation}.
\end{proof}

A \textit{subrack} of a rack
$X$ is a non-empty subset $Y\subseteq X$ such
that $(Y,\trid)$ is also a rack.
We say that a rack $X$ is \textit{indecomposable} if the inner group $\Inn X$
acts transitively on $X$. Also, $X$ is \textit{decomposable} if it is not
indecomposable. 

Let $(X,\trid)$ and $(Y,\trid)$ be racks. A map
$f:(X,\trid)\to(Y,\trid)$ is a \textit{morphism} of racks if
$f(i\trid j)=f(i)\trid f(j)$, for all $i,j\in X$. 

\begin{example}
\label{exa:racks}
A group $G$ is a rack with $x\trid y=xyx^{-1}$ for all $x,y\in G$. 
If a subset
$X\subseteq G$ is stable under conjugation by $G$, then it is a subrack of $G$.
In particular, we list the following examples.
\begin{enumerate}
\item The rack given by the conjugacy class of involutions in $G=\mathbb{D}_p$, the dihedral
group with $2p$ elements, has $p$ elements. It is called the \textit{dihedral rack} (of order $p$) 
and will be denoted by $\mathbb{D}_p$.
\item Let $\mathcal{T}$ be the conjugacy class of $(2\,3\,4)$
in $\mathbb{A}_4$.
We use the following labeling for the elements of $\mathcal{T}$:
$\pi_1=(2\,3\,4)$, $\pi_2=(1\,4\,3)$, $\pi_3=(1\,2\,4)$, $\pi_4=(1\,3\,2)$.
This is the rack associated with the vertices of the tetrahedron. 
\item Let $\mathcal{A}$ be the conjugacy class of $(1\,2)$ in $\mathbb{S}_4$.
We use the following labeling for the elements of $\mathcal{A}$:
$\pi_1=(3\,4)$, $\pi_2=(2\,3)$, $\pi_3=(2\,4)$, $\pi_4=(1\,2)$, $\pi_5=(1\,3)$, $\pi_6=(1\,4)$.

\item Let $\mathcal{B}$ be the rack given by the permutations 
$\varphi_1=(2\,3\,4\,5)$, $\varphi_2=(1\,5\,6\,3)$, $\varphi_3=(1\,2\,6\,4)$, $\varphi_4=(1\,3\,6\,5)$, $\varphi_5=(1\,4\,6\,2)$, $\varphi_6=(2\,5\,4\,3)$. This rack can be realized as the conjugacy class of $4$-cycles in $\mathbb{S}_4$. 
The rack $\mathcal{B}$ is not isomorphic to $\mathcal{A}$. Indeed, for all $i\in\mathcal{A}$
the map $\varphi_i\in\Inn{\mathcal{A}}$ is an involution,
but $1\trid(1\trid2)=4$ in $\mathcal{B}$.

\item Let $\mathcal{C}$ be the conjugacy class of $(1\,2)$ in $\mathbb{S}_5$.
We use the following labeling for the elements of $\mathcal{C}$:
$\pi_1=(1\,2)$, $\pi_2=(2\,3)$, $\pi_3=(1\,3)$, $\pi_4=(2\,4)$, $\pi_5=(1\,4)$, $\pi_6=(2\,5)$, 
$\pi_7=(1\,5)$, $\pi_8=(3\,4)$, $\pi_9=(3\,5)$, $\pi_{10}=(4\,5)$.
\end{enumerate}
\end{example}

\begin{example}
\label{exa:affine}
Let $p$ be a prime number and let $q$ be a power of $p$. Let $X=\mathbb{F}_q$,
the finite field of $q$ elements.  For $0\ne\alpha\in\mathbb{F}_q$ we have a
rack structure on $X$ given by $x\trid y=(1-\alpha)x+\alpha y$ for all
$x,y\in X$.  This rack is called the \textit{affine rack} associated to the
pair $(\mathbb{F}_q,\alpha)$ and will be denoted by $\mathrm{Aff}(q,\alpha)$.
These racks are also called Alexander quandles, see \cite{MR1990571}.
\end{example}

%

\begin{rem}
\label{rem:inner}
Let $X$ be a finite rack and assume that
$\Inn X$ acts transitively on $X$. Then for all $i,j\in
X$ there exist $r\in \ndN $ and
$k_1,k_2,\dots ,k_r\in X$ such that
$\varphi^{\pm1}_{k_1}\varphi^{\pm1}_{k_2}\cdots\varphi^{\pm1}_{k_r}(i)=j$.
Lemma~\ref{lem:conjugation} implies that
all permutations $\varphi_i$, where $i\in X$, have the
same cycle structure.
\end{rem}

\begin{lem}
\label{lem:rack_indecomposable}
Let $X$ be a finite rack, and let
$Y$ be a non-empty proper subset of $X$. The following are equivalent.
\begin{enumerate}
\item $X=Y\cup(X\backslash Y)$ is a decomposition of $X$.
\item $X\trid Y\subseteq Y$. 
\end{enumerate}
\end{lem}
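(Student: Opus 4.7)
My plan is to verify the two implications directly from the definitions, with the only substantive step being a pigeonhole/bijectivity argument. By a \emph{decomposition} $X=Y\cup(X\setminus Y)$ of the finite rack $X$ one understands that both subsets $Y$ and $X\setminus Y$ are stable under the action of $\Inn X$ on $X$ (equivalently, under every map $\varphi_i$, $i\in X$), so that in particular both $(Y,\trid)$ and $(X\setminus Y,\trid)$ are themselves subracks.

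The implication (1)$\Rightarrow$(2) is then immediate from this convention: for every $i\in X$ and $y\in Y$ the element $i\trid y=\varphi_i(y)$ lies in $Y$, which is exactly the inclusion $X\trid Y\subseteq Y$.

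For (2)$\Rightarrow$(1) the key observation is that rack axiom~(R1), combined with the finiteness of $Y$, upgrades the containment $\varphi_i(Y)\subseteq Y$ to an equality $\varphi_i(Y)=Y$ for every $i\in X$: an injective self-map of a finite set is automatically surjective. Taking complements gives $\varphi_i(X\setminus Y)=X\setminus Y$ as well, so both $Y$ and $X\setminus Y$ are preserved by all the $\varphi_i$ with $i\in X$. In particular, $Y\trid Y\subseteq Y$ and $(X\setminus Y)\trid(X\setminus Y)\subseteq X\setminus Y$, so both parts are subracks, the partition is $\Inn X$-stable, and we have the claimed decomposition.

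I do not expect a real obstacle here; the only conceptual point is to fix the convention that a decomposition encodes stability under the full inner group action (not merely stability under the $\varphi_i$ with $i$ running through one of the two parts), so that the finiteness-plus-bijectivity argument on $Y$ transfers automatically to the complement $X\setminus Y$.
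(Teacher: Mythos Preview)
Your proposal is correct. The paper itself does not give a proof but simply refers to \cite[Lemma~1.14]{MR1994219}; your self-contained argument, using (R1) and finiteness to upgrade $\varphi_i(Y)\subseteq Y$ to $\varphi_i(Y)=Y$ and then pass to the complement, is exactly the standard proof one finds behind that citation.
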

\begin{proof}
See \cite[Lemma 1.14]{MR1994219}.
\end{proof}


\begin{lem}
\label{lem:racks_commute}
Let $X$ be a rack and
let $a,x,y,z\in X$ such that $x\trid y=z$, $a\trid x=x$ and $a\trid z=z$.
Then $a\trid y=y$.
\end{lem}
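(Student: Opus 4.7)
The plan is to use the self-distributivity axiom (R2) together with the bijectivity of the left translations from (R1), and to let the hypotheses collapse the resulting identity.

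First I would apply (R2) to the triple $(a,x,y)$ to obtain
\[
a\trid z = a\trid (x\trid y) = (a\trid x)\trid (a\trid y).
\]
Now I would substitute the two fixed-point hypotheses. On the left, $a\trid z = z = x\trid y$ by assumption and by the defining equality $x\trid y = z$. On the right, $a\trid x = x$, so the identity becomes
\[
x\trid y = x\trid (a\trid y).
\]

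Finally I would invoke axiom (R1): the map $\varphi_x\colon X\to X$, $u\mapsto x\trid u$, is bijective, hence injective. Cancelling $x\trid$ from both sides of the previous equation yields $y = a\trid y$, which is the desired conclusion. There is no real obstacle here; the whole argument is a one-line application of self-distributivity plus the injectivity that (R1) provides, and it uses both fixed-point hypotheses exactly once.
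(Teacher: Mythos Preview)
Your proof is correct and follows exactly the same approach as the paper's: apply (R2) to $a\trid(x\trid y)$, substitute the hypotheses to obtain $x\trid y = x\trid(a\trid y)$, and cancel via (R1). The paper simply compresses this into a single chain of equalities.
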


\begin{proof}
We have $x\trid y=z=a\trid z=a\trid(x\trid y)=(a\trid x)\trid(a\trid y)=x\trid (a\trid y)$. Thus the claim 
follows from (R1).
\end{proof}

\begin{lem}
\label{lem:racks_abcd}
 Let $X$ be a rack and let $a,b,c,d\in X$ such that $a\trid a=a$.
 If $a\trid c=b\trid c$, $a\trid d=b\trid d$ and $c\trid d=a$ then
 $b\trid a=a$.
\end{lem}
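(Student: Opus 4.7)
The plan is to compute $b\trid a$ by rewriting $a$ as $c\trid d$ (using the hypothesis $c\trid d=a$), applying the self-distributivity axiom (R2) to push $b$ through, replacing $b\trid c$ and $b\trid d$ by $a\trid c$ and $a\trid d$ using the two equalities in the hypothesis, and finally applying (R2) in reverse. This should collapse everything to $a\trid a$, which equals $a$ by assumption.

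Concretely, I expect the proof to reduce to the single chain
\begin{equation*}
b\trid a = b\trid(c\trid d) = (b\trid c)\trid(b\trid d) = (a\trid c)\trid(a\trid d) = a\trid(c\trid d) = a\trid a = a,
\end{equation*}
where the second and fourth equalities are (R2) and the remaining ones are direct substitutions of the hypotheses. Unlike the previous lemma, there is no need to invoke (R1), since both applications of self-distributivity go in the ``forward'' direction and no cancellation is required.

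There is essentially no obstacle here; the lemma is really just an observation that $\varphi_b$ and $\varphi_a$, which are forced to agree on $c$ and $d$ by hypothesis, must then agree on anything obtained from $c$ and $d$ by a single $\trid$-operation, in particular on $c\trid d=a$. The only thing to verify carefully is that the parentheses align correctly when (R2) is applied, which is routine.
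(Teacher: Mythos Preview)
Your proof is correct and matches the paper's argument exactly: the paper gives precisely the same one-line chain $b\trid a=b\trid(c\trid d)=(b\trid c)\trid(b\trid d)=(a\trid c)\trid(a\trid d)=a\trid(c\trid d)=a\trid a=a$.
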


\begin{proof} $b\trid a=b\trid(c\trid
	d)=(b\trid c)\trid(b\trid d)
	=(a\trid c)\trid(a\trid
	d)=a\trid(c\trid d)=a\trid a=a$.
\end{proof}

We say that a rack is \textit{involutive} if $\varphi_i$ is an involution for
every $i\in X$, that is, if
$i\trid(i\trid j)=j$ for every $i,j\in X$. 
The dihedral racks and the racks $\mathcal{A}$ and $\mathcal{C}$ 
of Example \ref{exa:racks} are involutive.

\begin{lem}
\label{lem:involutions_abc}
Let $X$ be an involutive rack. Let $a,b,c\in X$ such that $a\trid a=a$.
If $b\trid a=c\trid a$ and $a\trid b=c\trid b$ then $a\trid(b\trid a)=b\trid a$.
\end{lem}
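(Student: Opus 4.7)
The target identity is $a\trid (b\trid a)=b\trid a$, and the available ingredients are the self-distributivity axiom (R2), the idempotency $a\trid a=a$, the two hypotheses $b\trid a=c\trid a$ and $a\trid b=c\trid b$, and the involutivity $x\trid(x\trid y)=y$. My plan is to manipulate the left-hand side purely by rewriting, using (R2) twice in opposite directions and the involutivity to insert a ``$c\trid c\trid$'' to match a hypothesis.

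First I apply (R2) to expand the outer operation and use $a\trid a=a$:
\begin{equation*}
a\trid(b\trid a)=(a\trid b)\trid (a\trid a)=(a\trid b)\trid a.
\end{equation*}
The hypothesis $a\trid b=c\trid b$ then rewrites this as $(c\trid b)\trid a$. The key trick is to introduce a $c$ on the right-hand argument using involutivity: $a=c\trid(c\trid a)$, so
\begin{equation*}
(c\trid b)\trid a=(c\trid b)\trid (c\trid (c\trid a))=c\trid (b\trid (c\trid a)),
\end{equation*}
where the last equality is (R2) used in the reverse direction with pivot $c$.

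Now the other hypothesis $c\trid a=b\trid a$ lets me replace $c\trid a$ by $b\trid a$ inside, and involutivity collapses $b\trid(b\trid a)=a$:
\begin{equation*}
c\trid(b\trid (c\trid a))=c\trid (b\trid (b\trid a))=c\trid a=b\trid a,
\end{equation*}
with the final equality being the hypothesis once more. Assembling the chain gives $a\trid(b\trid a)=b\trid a$, as desired.

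There is no real obstacle here: the lemma is a short syntactic identity. The only nonobvious move is recognizing that one should insert $c\trid(c\trid a)$ in place of $a$ so that (R2) becomes applicable to fold the hypothesis $a\trid b=c\trid b$ together with $b\trid a=c\trid a$; once that idea is found, the computation is forced.
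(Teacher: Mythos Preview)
Your proof is correct and follows exactly the same route as the paper's: expand via (R2) and $a\trid a=a$, substitute $a\trid b=c\trid b$, insert $a=c\trid(c\trid a)$ to refold via (R2), then use $c\trid a=b\trid a$ and involutivity to collapse. The paper merely compresses these steps into a single displayed chain of equalities.
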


\begin{proof}
$a\trid(b\trid a)=(a\trid b)\trid(a\trid a)=(c\trid b)\trid a=c\trid (b\trid (c\trid a))=c\trid a=b\trid a$.
\end{proof}

\subsection{The enveloping group of a rack}
\label{ss:envgroup}

For any rack $X$,
the \textit{enveloping group} of $X$ is 
\[
G_X=F(X)/\langle iji^{-1}=i\trid j,\; i,j\in X\rangle,
\]
where $F(X)$ denotes the free group generated by $X$. 
We write $Z(G_X)$ for the center of $G_X$. 
For all $i\in X$ let $x_i$ be the image of
$i$ under $X \hookrightarrow F(X) \rightarrow G_X$.

\begin{rem}
\label{rem:grading}
The enveloping group $G_X$ of a rack $X$
is $\ndZ$-graded by $\deg{x_i}=1$ for all $i\in X$.
\end{rem}

A rack $X$ is \textit{faithful} if the
map $X\to \Inn X$ defined by $i\mapsto \varphi _i$ is injective,
see \cite[Def.\,1.11]{MR1994219}.
A rack $X$ is \textit{injective} if the map $X\to G_X$ defined by
$i\mapsto x_i$ is injective, see \cite[Def. 8]{MR1809284}.

\begin{lem}
  Any faithful rack is injective.
  \label{lem:faithful=>injective}
\end{lem}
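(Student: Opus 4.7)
The plan is to construct a group homomorphism $\pi\colon G_X\to\Inn X$ that sends each generator $x_i$ to $\varphi_i$, and then use faithfulness to conclude. The key observation is that Equation~\eqref{eq:conjugation} shows $\varphi_{i\trid j}=\varphi_i\varphi_j\varphi_i^{-1}$ inside $\Inn X$. In other words, the elements $\varphi_i\in\Inn X$ ($i\in X$) satisfy precisely the defining relations of $G_X$.

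First I would define a set-theoretic map $X\to\Inn X$ by $i\mapsto\varphi_i$. Since $F(X)$ is free on $X$, this extends uniquely to a group homomorphism $\tilde\pi\colon F(X)\to\Inn X$. By Equation~\eqref{eq:conjugation}, $\tilde\pi(iji^{-1}(i\trid j)^{-1})=\varphi_i\varphi_j\varphi_i^{-1}\varphi_{i\trid j}^{-1}=e$, so $\tilde\pi$ kills the relators defining $G_X$ and descends to a group homomorphism $\pi\colon G_X\to\Inn X$ with $\pi(x_i)=\varphi_i$.

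Now suppose $x_i=x_j$ in $G_X$ for some $i,j\in X$. Applying $\pi$ yields $\varphi_i=\varphi_j$ in $\Inn X$. Since $X$ is assumed faithful, the map $i\mapsto\varphi_i$ is injective, so $i=j$. This shows that $i\mapsto x_i$ is injective, i.e.\ $X$ is injective.

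There is essentially no obstacle here; the only thing to verify carefully is that the defining relations of $G_X$ indeed hold among the $\varphi_i$ in $\Inn X$, which is exactly what Equation~\eqref{eq:conjugation} provides. The proof fits in a few lines.
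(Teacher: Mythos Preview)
Your proof is correct and essentially the same as the paper's: the paper phrases it as an action of $G_X$ on $X$ (with $x_k$ acting as $\varphi_k$), while you phrase it as the induced homomorphism $G_X\to\Inn X$, but these are equivalent formulations of the same argument. In both cases the point is that the relation $\varphi_{i\trid j}=\varphi_i\varphi_j\varphi_i^{-1}$ guarantees that $x_i\mapsto\varphi_i$ is well-defined on $G_X$, after which faithfulness immediately gives injectivity.
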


\begin{proof}
  Let $X$ be a faithful rack and let $i,j\in X$.
  By (R1), (R2) and the definition of $G_X$ the group $G_X$ acts on $X$
  such that $x_k\trid l=k\trid l$ for all $k,l\in X$.
  If $i\not=j$ then $\varphi _i\not=\varphi _j$ since $X$ is faithful.
  Hence there exists $k\in X$ such that $x_i\trid k=i\trid k\not=j\trid
  k=x_j\trid k$. It follows that $x_i\not=x_j$ in $G_X$ and hence $X$ is
  injective.
\end{proof}

\begin{example} \label{exa:notinjective}
  We give a rack which is not injective.
  Let $X=\{1,2,3\}$ and $\trid :X\times X\to X$ the map with
  $1\trid i=2\trid i=i$ for all $i\in X$ and $3\trid 1=2$, $3\trid 2=1$,
  $3\trid 3=3$. Then $X$ is a rack (and even a quandle). In $G_X$ the
  relations $x_1x_3x_1^{-1}=x_3$ and $x_3x_1x_3^{-1}=x_2$ hold
  and hence $x_1x_3=x_3x_1=x_2x_3$. Thus $x_1=x_2$ in $G_X$,
  that is, $X$ is not injective.
\end{example}


\begin{example} \label{exa:notfaithful}
  We give a rack which is injective but not faithful.
  Let $X=\{1,2\}$ and $\trid :X\times X\to X$ the map with
  $i\trid j=j$ for all $i,j\in X$. 
  Then $\mathrm{Inn}(X)$ is trivial and $G_X\cong \ndZ ^2$.
  We conclude that $X$ is injective but not faithful. 
\end{example}

\begin{lem}
  Let $X$ be a rack and let
  $k\in\ndN$, $i_1,\dots,i_k,l\in X$, and $\epsilon _1,\dots,\epsilon _k\in
  \{1,-1\}$. 
  If $l$ is a fixed point of the permutation
  $\varphi_{i_1}^{\epsilon _1}\varphi_{i_2}^{\epsilon _2}\cdots
  \varphi_{i_k}^{\epsilon _k}$
  then  $x_{i_1}^{\epsilon _1}x_{i_2}^{\epsilon _2}\cdots x_{i_k}^{\epsilon
  _k}$ belongs to the centralizer of
  $x_l$ in $G_X$. 
  The converse is true if $X$ is injective.
\end{lem}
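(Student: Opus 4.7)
The plan is to reduce everything to a single conjugation identity in $G_X$ and then use induction on $k$. The key observation is that the defining relations $x_ix_jx_i^{-1}=x_{i\trid j}$ of $G_X$ immediately give the identity
\[
x_i^{\epsilon }x_lx_i^{-\epsilon }=x_{\varphi _i^{\epsilon }(l)}
\qquad\text{in }G_X,
\]
for all $i,l\in X$ and $\epsilon \in \{1,-1\}$. For $\epsilon =1$ this is the defining relation itself, and for $\epsilon =-1$ it follows by applying the defining relation with $j=\varphi _i^{-1}(l)$ and rearranging.

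With this in hand, I would prove by induction on $k$ that
\[
gx_lg^{-1}=x_{\varphi (l)},\qquad\text{where }g=x_{i_1}^{\epsilon _1}\cdots x_{i_k}^{\epsilon _k},\ \varphi =\varphi _{i_1}^{\epsilon _1}\cdots \varphi _{i_k}^{\epsilon _k}.
\]
The base case $k=1$ is the identity above. For the inductive step, one conjugates the inductive hypothesis by $x_{i_0}^{\epsilon _0}$ on the left and applies the base case once more.

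The forward direction of the lemma is then immediate: if $\varphi (l)=l$, then $gx_lg^{-1}=x_l$, so $g$ centralizes $x_l$. For the converse, assume $X$ is injective and that $g$ centralizes $x_l$. Then $x_{\varphi (l)}=gx_lg^{-1}=x_l$, and injectivity of the map $i\mapsto x_i$ forces $\varphi (l)=l$.

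There is no real obstacle here; the only subtlety is making sure one handles the negative exponents correctly, which is why I would record the identity $x_i^{-1}x_lx_i=x_{\varphi _i^{-1}(l)}$ explicitly at the start rather than relying on Lemma~\ref{lem:conjugation}, which was stated only for positive powers of the $\varphi _i$.
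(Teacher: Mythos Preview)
Your proof is correct and follows essentially the same route as the paper: both arguments establish the conjugation identity $gx_lg^{-1}=x_{\varphi(l)}$ in $G_X$ by iterating the defining relation $x_ix_jx_i^{-1}=x_{i\trid j}$, and then read off both directions of the lemma. Your version is somewhat more explicit, in that you spell out the $\epsilon=-1$ case and the converse direction, whereas the paper treats only the all-positive case and dismisses the rest as similar.
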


\begin{proof}
Since $x_i x_j=x_{i\trid j}x_i$ for all $i,j\in X$, we have
\begin{gather}
\label{eq:formula}
(x_{i_1}x_{i_2}\cdots x_{i_k}) x_l = x_{i_1\trid(i_2\trid\cdots\trid(i_k\trid l))}(x_{i_1}x_{i_2}\cdots x_{i_k})
\end{gather}
and the result follows if $\epsilon _i=1$ for all $i$. The general case is
similar.
\end{proof}

\begin{cor}
  Let $X$ be a rack and let
  $k\in\ndN$, $i_1,\dots,i_k,l\in X$, and $\epsilon _1,\dots,\epsilon _k\in
  \{1,-1\}$. 
  If
  $\varphi_{i_1}^{\epsilon _1}\varphi_{i_2}^{\epsilon _2}\cdots
  \varphi_{i_k}^{\epsilon _k}=\mathrm{id}$ then
  $x_{i_1}^{\epsilon _1}x_{i_2}^{\epsilon _2}\cdots x_{i_k}^{\epsilon _k}$
  is central in $G_X$. The converse is true if $X$ is injective.
\end{cor}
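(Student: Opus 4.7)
The plan is to derive both directions of the corollary directly from the preceding lemma by quantifying over all $l\in X$. The key observation is that the set $\{x_l\mid l\in X\}$ generates the group $G_X$ (by construction of $G_X$ as a quotient of the free group on $X$), so an element of $G_X$ is central if and only if it commutes with $x_l$ for every $l\in X$. Dually, a permutation of $X$ equals the identity if and only if every $l\in X$ is a fixed point.

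For the forward implication, I would assume that $\varphi_{i_1}^{\epsilon_1}\varphi_{i_2}^{\epsilon_2}\cdots\varphi_{i_k}^{\epsilon_k}=\mathrm{id}$. Then every element $l\in X$ is a fixed point of this permutation, so by the preceding lemma the element $x_{i_1}^{\epsilon_1}x_{i_2}^{\epsilon_2}\cdots x_{i_k}^{\epsilon_k}$ belongs to the centralizer of $x_l$ in $G_X$ for each $l\in X$. Since these $x_l$ generate $G_X$, the element is central.

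For the converse, I would assume that $X$ is injective and that $x_{i_1}^{\epsilon_1}\cdots x_{i_k}^{\epsilon_k}$ is central in $G_X$. Then it centralizes $x_l$ for every $l\in X$, and the injective case of the preceding lemma gives that every $l\in X$ is a fixed point of $\varphi_{i_1}^{\epsilon_1}\cdots\varphi_{i_k}^{\epsilon_k}$; hence this permutation of $X$ is the identity.

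There is essentially no obstacle: the entire argument is packaged inside the preceding lemma, and the corollary is the straightforward ``quantify over $l$'' version. The only point that deserves a line of care is the remark that $G_X$ is generated by $\{x_l\mid l\in X\}$, which is immediate from its presentation as a quotient of $F(X)$.
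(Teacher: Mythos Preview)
Your proposal is correct and matches the paper's intended argument exactly: the paper states this as a corollary of the preceding lemma with no further proof, and your ``quantify over all $l\in X$'' together with the observation that $G_X$ is generated by $\{x_l\mid l\in X\}$ is precisely the one-line justification that is being left implicit.
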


The following result is a special case of \cite[Lemma\,1.9(2)]{MR1994219}.

\begin{cor}
\label{cor:Inn(X)}
  Let $X$ be an injective rack. Then $\Inn X\simeq G_X/Z(G_X)$.
\end{cor}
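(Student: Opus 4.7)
The plan is to exhibit an explicit surjective homomorphism $\pi\colon G_X\to \Inn X$ and identify its kernel with $Z(G_X)$, after which the first isomorphism theorem yields the claim. Using the defining relations of $G_X$, the assignment $x_i\mapsto \varphi _i$ is consistent with those relations by Equation~\eqref{eq:conjugation}, so it extends uniquely to a group homomorphism $\pi\colon G_X\to \Inn X$. Since the $\varphi _i$ generate $\Inn X$ by definition, $\pi$ is surjective.

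Next I would show $\ker \pi = Z(G_X)$. Write any element $w\in G_X$ as a word $w=x_{i_1}^{\epsilon _1}\cdots x_{i_k}^{\epsilon _k}$, so that $\pi (w)=\varphi _{i_1}^{\epsilon _1}\cdots \varphi _{i_k}^{\epsilon _k}$. The inclusion $\ker \pi \subseteq Z(G_X)$ is then exactly the forward direction of the corollary immediately preceding the statement and does not require injectivity: if $\pi (w)=\mathrm{id}$, that corollary gives $w\in Z(G_X)$. The reverse inclusion $Z(G_X)\subseteq \ker \pi $ is the converse direction of the same corollary, which is precisely the place where the injectivity assumption on $X$ is used; it guarantees that any central $w$ acts trivially on every $x_l$, and hence that $\pi (w)=\mathrm{id}$.

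Putting these together, $\pi $ descends to an isomorphism $G_X/Z(G_X)\xrightarrow{\sim}\Inn X$. There is essentially no obstacle here, since the result is a direct packaging of the preceding corollary; the only points that need care are checking that $\pi $ is well-defined on $G_X$ (handled by~\eqref{eq:conjugation}) and making sure to invoke the injectivity hypothesis only when passing from $Z(G_X)$ into $\ker \pi $, not in the other direction.
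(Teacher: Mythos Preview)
Your argument is correct and is exactly the derivation the paper's setup is pointing toward: the paper does not spell out a proof but cites \cite[Lemma~1.9(2)]{MR1994219}, and your proof is the natural way to obtain the statement directly from the preceding corollary together with Equation~\eqref{eq:conjugation}. The well-definedness of $\pi$, its surjectivity, and the two inclusions $\ker\pi\subseteq Z(G_X)$ and $Z(G_X)\subseteq \ker\pi$ are handled precisely as they should be, with the injectivity hypothesis invoked only for the latter inclusion.
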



Corollary~\ref{cor:Inn(X)} fails without the assumption that $X$ is injective.
Indeed, in Example~\ref{exa:notinjective} the group $G_X$ is abelian but
$\Inn X$ is not the trivial group.

For the rest of Subsection~\ref{ss:envgroup} let $X$ be a finite rack.
For all $i,j\in X$ and $n\in\mathbb{N}$ we define $i\trid^n j=\varphi_i^n(j)$.

\begin{lem}
\label{lem:formulas_GX}
Let $n\in\ndN$ and let $i,j\in X$. 
\begin{enumerate}
\item \label{left_n}$\displaystyle{x_i^n x_j=x_{i\trid^n j}x_i^n}$.
\item \label{right_n}$\displaystyle{x_i x_j^n=x_{i\trid j}^n x_i}$.
\end{enumerate}
\end{lem}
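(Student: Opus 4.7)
The plan is to prove both identities by induction on $n$, using the defining relation $x_i x_j = x_{i\trid j} x_i$ in $G_X$ as the base step.

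For part (1), the base case $n=1$ is immediate since $\varphi_i(j) = i \trid j$, so $x_i x_j = x_{i \trid j} x_i = x_{i \trid^1 j} x_i$. For the inductive step, assuming $x_i^n x_j = x_{i \trid^n j} x_i^n$, I would compute
\[
x_i^{n+1} x_j = x_i (x_i^n x_j) = x_i x_{i \trid^n j} x_i^n = x_{i \trid (i \trid^n j)} x_i \cdot x_i^n = x_{i \trid^{n+1} j} x_i^{n+1},
\]
where the third equality applies the base relation to the pair $(i, i \trid^n j)$ and the last uses $\varphi_i \circ \varphi_i^n = \varphi_i^{n+1}$.

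For part (2), the base case is again the defining relation. For the inductive step, assuming $x_i x_j^n = x_{i \trid j}^n x_i$, I would compute
\[
x_i x_j^{n+1} = (x_i x_j^n) x_j = x_{i \trid j}^n x_i x_j = x_{i \trid j}^n x_{i \trid j} x_i = x_{i \trid j}^{n+1} x_i.
\]

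There is essentially no obstacle here; both statements are routine consequences of the defining relations of $G_X$ together with the rack axioms that make $\varphi_i$ a well-defined iterated map. The only mild subtlety is to keep clear the distinction between $i \trid^n j = \varphi_i^n(j)$ in part (1) (iteration of the action of a single generator) and the straightforward exponent transfer in part (2), where the subscript $i \trid j$ does not change with $n$.
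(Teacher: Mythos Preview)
Your proof is correct and follows exactly the approach indicated in the paper, which simply says ``By induction on $n$''; you have spelled out the routine details. The only nitpick is that in this paper $\ndN=\mathbb{N}_0$, so the base case is $n=0$, but that case is trivial and your inductive step covers the passage from $0$ to $1$ anyway.
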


\begin{proof}
By induction on $n$.
\end{proof}

\begin{lem}
\label{lem:central}
Let $i\in X$ and let $n$ be the order of $\varphi_i$. Then $x_i^n\in Z(G_X)$.
\end{lem}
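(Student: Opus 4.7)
The plan is to invoke Lemma~\ref{lem:formulas_GX}\eqref{left_n} directly. Since $n$ is the order of $\varphi_i$, the permutation $\varphi_i^n$ is the identity on $X$, so $i\trid^n j=\varphi_i^n(j)=j$ for every $j\in X$. Substituting this into the identity $x_i^n x_j = x_{i\trid^n j}x_i^n$ yields $x_i^n x_j = x_j x_i^n$ for all $j\in X$.

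Since the elements $x_j$ with $j\in X$ generate $G_X$ (they are the images of the free generators of $F(X)$ under the canonical surjection $F(X)\to G_X$), commuting with every $x_j$ is enough to conclude that $x_i^n$ is central in $G_X$. Hence $x_i^n\in Z(G_X)$.

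The argument is purely computational; there is no real obstacle, and the previously established Lemma~\ref{lem:formulas_GX} does all the work. Note that this statement does \emph{not} require $X$ to be injective or faithful — the relation $i\trid^n j=j$ in $X$ transfers to an equality in $G_X$ regardless, because we are comparing the \emph{labels} on the right-hand side of Lemma~\ref{lem:formulas_GX}\eqref{left_n}, not distinguishing the $x_j$ themselves.
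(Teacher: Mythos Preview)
Your proof is correct and is exactly the argument the paper intends: its proof reads ``Follows from Lemma~\ref{lem:formulas_GX}\eqref{left_n},'' and you have simply spelled out that one-line deduction. There is nothing to add.
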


\begin{proof}
Follows from Lemma \ref{lem:formulas_GX}\eqref{left_n}.
\end{proof}

\begin{lem}
\label{lem:powers}
Assume that $X$ is indecomposable. Then all permutations $\varphi_i$, where $i\in X$, have the same order $n$.
Moreover, $x_i^n=x_j^n$ for all $i,j\in X$.
\end{lem}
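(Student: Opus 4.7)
The plan is to split the lemma into its two assertions and reduce each to material already established in the excerpt.

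For the first assertion, I would simply invoke Remark~\ref{rem:inner}: indecomposability of $X$ says that $\Inn X$ acts transitively, and the remark (via Lemma~\ref{lem:conjugation}) already concludes that in that situation all $\varphi_i$ have the same cycle structure. Two permutations with the same cycle structure have the same order, so there is a common $n$.

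For the second assertion, the idea is to realize the transitive action at the level of $G_X$ by conjugation on the generators $x_i$. The key computation is that in $G_X$ one has $x_k x_l x_k^{-1} = x_{k\trid l}=x_{\varphi_k(l)}$ directly from the defining relations, and rewriting this gives also $x_k^{-1} x_l x_k = x_{\varphi_k^{-1}(l)}$. Hence for any word $\varphi_{k_1}^{\epsilon_1}\cdots \varphi_{k_r}^{\epsilon_r}$ in the inner group sending $i$ to $j$, the corresponding element $g=x_{k_1}^{\epsilon_1}\cdots x_{k_r}^{\epsilon_r}\in G_X$ satisfies $g\, x_i\, g^{-1}=x_j$. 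By indecomposability, such a word exists for every pair $i,j$.

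Combining the two ingredients: raising the conjugation relation to the $n$-th power gives $g\, x_i^n\, g^{-1}=x_j^n$, and by Lemma~\ref{lem:central} the element $x_i^n$ lies in $Z(G_X)$, so the left-hand side equals $x_i^n$. Thus $x_i^n=x_j^n$, as required. No step looks delicate; the only thing worth double-checking is the sign bookkeeping in $x_k^{\epsilon} x_l x_k^{-\epsilon}=x_{\varphi_k^{\epsilon}(l)}$, which is a direct manipulation of the defining relation of $G_X$.
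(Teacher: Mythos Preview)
Your proof is correct and follows essentially the same approach as the paper. The paper's one-line argument cites Equation~\eqref{eq:conjugation} for the first claim and Lemmas~\ref{lem:formulas_GX}\eqref{right_n} and~\ref{lem:central} for the second; your version unpacks the same mechanism (conjugacy of the $\varphi_i$ for the common order, and conjugation in $G_X$ combined with centrality of $x_i^n$ for the equality $x_i^n=x_j^n$), just spelling out the transitive action at the level of $G_X$ rather than invoking the prepackaged identity $x_ix_j^n=x_{i\trid j}^n x_i$.
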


\begin{proof}
Since $X$ is indecomposable, the first claim follows from Equation \eqref{eq:conjugation}. 
The second claim holds by Lemmas \ref{lem:formulas_GX}\eqref{right_n} and \ref{lem:central}.
\end{proof}

For the rest of Subsection~\ref{ss:envgroup}
assume that $X$ is (finite and)
indecomposable. Let $d$ denote the number of elements of $X$. 
For convenience we identify $X$ with $\{1,2,\dots,d\}$. 
Let $n$ be the order of $\varphi_1$. 
Lemma \ref{lem:central} 
implies that the subgroup $\langle x_1^n\rangle$ of $G_X$ generated by $x_1^n$
is normal in $G_X$. Let
\[
\overline{G_X}=G_X/\langle x_1^n\rangle
\]
and let $\pi:G_X\to\overline{G_X}$ be the canonical projection.

\begin{lem}
The group $\overline{G_X}$ is finite.
\end{lem}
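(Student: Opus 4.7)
The plan is to exploit the homomorphism
\[
\phi\colon \overline{G_X}\to \Inn X,\qquad \bar{x}_i\mapsto \varphi_i,
\]
which is well-defined because the rack relations are respected (by \eqref{eq:conjugation}) and $\varphi_1^n=\mathrm{id}$ takes care of the extra relation imposed in $\overline{G_X}$. The map is surjective since the $\varphi_i$ generate $\Inn X$, and its target is finite because $\Inn X\le\mathrm{Sym}(X)$ and $X$ is finite. Thus finiteness of $\overline{G_X}$ reduces to finiteness of $\ker\phi$.

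Next I would show $\ker\phi\subseteq Z(\overline{G_X})$: iterating the relation $\bar{x}_k\bar{x}_j\bar{x}_k^{-1}=\bar{x}_{k\trid j}$ along a word representing $g$ gives $g\bar{x}_ig^{-1}=\bar{x}_{\phi(g)(i)}$, so if $\phi(g)=\mathrm{id}$, then $g$ centralises each generator, and hence all of $\overline{G_X}$. In particular $\ker\phi$ is abelian; and since $[\overline{G_X}:\ker\phi]\le|\Inn X|<\infty$ and $\overline{G_X}$ is finitely generated, the Reidemeister--Schreier argument yields that $\ker\phi$ is finitely generated too. What remains is to show that every element of $\ker\phi$ has finite order; a finitely generated torsion abelian group is then automatically finite.

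To establish torsion I would pick a Schreier transversal $\{t_g\}_{g\in\Inn X}$ with $t_{\mathrm{id}}=1$, each $t_g$ a positive word in the $\bar{x}_i$'s with $\phi(t_g)=g$. The subgroup $\ker\phi$ is then generated by the elements $\gamma(g,i):=t_g\bar{x}_it_{g\varphi_i}^{-1}$ for $g\in\Inn X$, $i\in X$. Using Lemma \ref{lem:formulas_GX} to move $n$-th powers of individual generators through arbitrary words, combined with the relations $\bar{x}_j^n=1$ that hold in $\overline{G_X}$ for every $j\in X$ (by Lemma \ref{lem:powers} together with the definition $\overline{G_X}=G_X/\langle x_1^n\rangle$), a suitable power of each $\gamma(g,i)$ should reduce to the identity.

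The main obstacle is this last torsion step. The reduction to $\ker\phi$, its centrality, and its finite generation are essentially formal, but bounding the order of each Schreier generator genuinely requires combining the rack relations with the uniform vanishing $\bar{x}_j^n=1$, which itself relies on the indecomposability of $X$ via Lemma \ref{lem:powers}. Once torsion is verified, finiteness of $\ker\phi$, and therefore of $\overline{G_X}$, is immediate.
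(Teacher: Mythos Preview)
Your reduction is correct up to the torsion step, which you rightly flag as the obstacle---but the sketch you offer for it does not work as stated. A Schreier generator $\gamma(g,i)=t_g\bar{x}_it_{g\varphi_i}^{-1}$ contains a single factor $\bar{x}_i$ sandwiched between transversal words; raising $\gamma(g,i)$ to a power does not manufacture $n$-th powers of individual generators, so Lemma~\ref{lem:formulas_GX} and the relations $\bar{x}_j^n=1$ give you no direct purchase. There is, however, a clean way to close the gap: you have already established that $\overline{G_X}/Z(\overline{G_X})$ is finite (it is a quotient of $\Inn X$), so Schur's theorem makes $[\overline{G_X},\overline{G_X}]$ finite; and since $X$ is indecomposable, all generators become equal in the abelianisation, whence $\overline{G_X}^{\mathrm{ab}}\simeq\ndZ/n\ndZ$. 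Then $\overline{G_X}$ is an extension of one finite group by another and is therefore finite. With this patch your argument goes through.

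The paper takes a completely different, more hands-on route: it shows that the relation $x_ix_j=x_{i\trid j}x_i$ lets any word in the $x_i$ be rewritten as a weakly increasing word $x_{i_1}\cdots x_{i_k}$ with $i_1\le\cdots\le i_k$, times a central power $x_1^{tn}$; then Lemmas~\ref{lem:central} and~\ref{lem:powers} cap the multiplicity of each letter at $n-1$, exhibiting at most $n^d$ coset representatives for $\overline{G_X}$. Your structural argument (once completed via Schur) is more conceptual and transports to other finite central quotients; the paper's normal-form argument is entirely self-contained and yields an explicit bound on $|\overline{G_X}|$.
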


\begin{proof}
First we prove that for all $i,j\in X$ with $i\ne j$ there exist $k,l\in X$ such that 
$x_kx_l=x_ix_j$, $k<l$ and $(k,l)$ is lexicographically not bigger than $(i,j)$. 
If $i<j$ then the claim is trivial. Assume that $i>j$. Then
$x_ix_j=x_jx_k$, where $k\in X$ with $j\trid k=i$. Since $(j,k)$ is lexicographically 
smaller than $(i,j)$, the claim follows by induction on $i$. 

By Lemmas \ref{lem:central} and \ref{lem:powers} all elements of $G_X$ take the form 
$x_{i_1}\cdots x_{i_k}x_1^{tn}$ for some $k\in\ndN$, $i_1,\dots,i_k\in X$ and $t\in\ndZ$.
By the first paragraph of the proof we may assume that $i_1\leq i_2\leq\cdots\leq i_k$ and 
by Lemmas \ref{lem:central} and \ref{lem:powers} we may assume that 
at most $n-1$ consecutive $x_{i_j}$ are equal. 
This implies the claim.
\end{proof}



\begin{lem}
\label{lem:centralizer}
The following hold.
\begin{enumerate}
\item\label{lem:pi}$C_{G_X}(x_1)=\pi^{-1}C_{\overline{G_X}}(\pi x_1)$.
\item\label{lem:centralizer_ab} If $C_{\overline{G_X}}(\pi x_1)$ is abelian then $C_{G_X}(x_1)$ is abelian.
\item\label{lem:centralizer_cy} If $C_{\overline{G_X}}(\pi x_1)$ is cyclic and generated by $\pi x_1$ then $C_{G_X}(x_1)$ is cyclic and generated by $x_1$.
\end{enumerate}
\end{lem}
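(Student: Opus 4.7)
The plan is to exploit two structural facts simultaneously: the $\ndZ$-grading on $G_X$ from Remark~\ref{rem:grading}, which gives a well-defined degree homomorphism $G_X\to\ndZ$ sending each $x_i$ to $1$, and the centrality $\langle x_1^n\rangle\subseteq Z(G_X)$ from Lemma~\ref{lem:central}. In particular, every element of $\langle x_1^n\rangle$ has degree in $n\ndZ$, and a single degree comparison will drive each of the three parts.

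For part \eqref{lem:pi}, the inclusion $C_{G_X}(x_1)\subseteq\pi^{-1}C_{\overline{G_X}}(\pi x_1)$ is automatic since $\pi$ is a group homomorphism. For the reverse inclusion, suppose $\pi(g)$ commutes with $\pi x_1$; then $gx_1g^{-1}x_1^{-1}\in\langle x_1^n\rangle$, so $gx_1g^{-1}=x_1^{1+nk}$ for some $k\in\ndZ$. Both sides have degree $1$, forcing $nk=0$, hence $k=0$ and $g\in C_{G_X}(x_1)$. For part \eqref{lem:centralizer_ab}, take $g,h\in C_{G_X}(x_1)$; by \eqref{lem:pi} their images in $\overline{G_X}$ commute, so $[g,h]\in\ker\pi=\langle x_1^n\rangle$, say $[g,h]=x_1^{nm}$. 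The left side has degree $0$ while the right has degree $nm$, so $m=0$ and $g,h$ commute. For part \eqref{lem:centralizer_cy}, if $C_{\overline{G_X}}(\pi x_1)=\langle\pi x_1\rangle$ and $g\in C_{G_X}(x_1)$, then \eqref{lem:pi} gives $\pi(g)=(\pi x_1)^k$ for some $k\in\ndZ$, so $gx_1^{-k}\in\langle x_1^n\rangle\subseteq\langle x_1\rangle$, and hence $g\in\langle x_1\rangle$ as claimed.

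The main conceptual point is recognizing that the $\ndZ$-grading of Remark~\ref{rem:grading} is strong enough to separate the identity of $G_X$ from nontrivial powers of $x_1^n$; once this is observed, all three parts reduce to short applications of the same idea and no further obstacle arises.
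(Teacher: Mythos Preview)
Your proof is correct and follows essentially the same approach as the paper: both arguments use the $\ndZ$-grading from Remark~\ref{rem:grading} to force the relevant power of $x_1^n$ to be trivial, with only cosmetic differences in bookkeeping (you phrase the key step via the commutator $[g,h]$ and the conjugate $gx_1g^{-1}$, whereas the paper writes $x_1zx_1^{-1}=zx_1^{nm}$ and $xyx^{-1}=yx_1^{nm}$).
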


\begin{proof}
$(1)$ The inclusion $\subseteq$ is trivial.
Let $\overline{z}\in C_{\overline{G_X}}(\pi x_1)$ and let $z\in G_X$ such that $\pi z=\overline{z}$. 
Then $\pi z=\pi(x_1 z x_1^{-1})$. This implies that $x_1 z x_1^{-1}=z x_1^{nm}$ for some $m\in\mathbb{Z}$.
Since $G_X$ is $\ndZ$-graded, see Remark \ref{rem:grading}, it follows that $m=0$. Hence $z\in C_{G_X}(x_1)$.

$(2)$ Let $x,y\in C_{G_X}(x_1)$. Then $\pi(xyx^{-1})=\pi y$ since $C_{\overline{G_X}}(\pi x_1)$ is abelian. Hence
$xyx^{-1}=yx_1^{nm}$ for some $m\in\ndZ$. Moreover $m=0$ since $G_X$ is $\ndZ$-graded. Thus $(2)$ holds.

$(3)$ Let $y\in C_{G_X}(x_1)$. Then $\pi y\in C_{\overline{G_X}}(\pi x_1)$, and hence $\pi y=(\pi x_1)^p$ for some 
$p\in\ndZ$ by assumption. Therefore $y=x_1^{p+nm}$ for some $m\in\ndZ$.
\end{proof}

\begin{rem}
\label{rem:centralizer}
Lemma \ref{lem:centralizer}\eqref{lem:pi} is a particular case of a more general fact. Let 
$p:H\to G$ be an epimorphism of groups and let $h\in H$ such that the restriction of 
$p$ to the conjugacy class of $h$ in $H$ is injective. Then $p^{-1}C_G(p h)=C_H(h)$. 
Indeed, the inclusion $\supseteq$ is trivial. Moreover, if $g\in H$ 
such that $(p g)(p h)(p g)^{-1}=(p h)$ then $ghg^{-1}=h$ by the assumption on $p$.
\end{rem}

\subsection{Braided action}
\label{ss:braided}

Let $G$ be a group. Let $X$ be a conjugacy
class of $G$, viewed as a rack with $x\trid y=xyx^{-1}$ for all $x,y\in X$. 
Assume that
$G$ is generated by $X$. 
Then $X$ is
an indecomposable rack by Lemma~\ref{lem:rack_indecomposable}. 
Moreover $X$ is injective by definition.
Let $d=\#X$. 
In what follows we assume that $X=\{x_1,x_2,\dots ,x_d\}$
is finite and that $d>1$.
When considering $X$ as a rack, we write $i$ for
$x_{i}$.

\begin{rem}
\label{rem:crossed_set}
The rack $X$ satisfies the following.
\begin{enumerate}
\item\label{quandle}$i\trid i=i$ for all $i\in X$.
\item\label{cs}For all $i,j\in X$ we have $i\trid j=j$ if and only if $j\trid i=i$.
\end{enumerate}
A rack satisfying properties \eqref{quandle} and \eqref{cs} is called a \textit{crossed set}.
\end{rem}

The following lemma will be used as an indecomposability criterion.

\begin{lem}
  \label{lem:cs_indecomposable}
  Let $Y$ be a finite crossed set which is indecomposable as a rack.
  Let $y\in Y$, and let
  $n$ be the number of elements of $Y$ not fixed by $\varphi _y$.
  Let $Y_1,Y_2\subseteq Y$ such that $Y_1\not=\emptyset $
  and that the following hold.
  \begin{enumerate}
    \item $y_1\trid z=z$ for all $y_1\in Y_1$, $z\in Y\setminus Y_2$.
    \item For all $y_2\in Y_2$ there exist $n$ elements
      of $Y_1$ not commuting with $y_2$.
  \end{enumerate}
  Then $Y_1=Y_2=Y$.
\end{lem}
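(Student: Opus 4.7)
The plan is to exploit indecomposability (via Lemma~\ref{lem:rack_indecomposable}) after first turning condition~(2) into a statement symmetric to~(1). By Remark~\ref{rem:inner}, indecomposability forces every $\varphi_y$ ($y\in Y$) to have the same cycle structure, so each has exactly $n$ non-fixed points. Assuming $n\geq 1$ (the degenerate case $\#Y=1$ being trivial), the total number of elements of $Y$ that fail to commute with a given $y_2\in Y_2$ equals $n$; hence the $n$ elements of $Y_1$ produced by~(2) must account for \emph{all} such elements. In particular $\varphi_{y_2}$ fixes $Y\setminus Y_1$ pointwise, which by the crossed-set axiom (Remark~\ref{rem:crossed_set}\eqref{cs}) rewrites as: for every $y\in Y\setminus Y_1$ and $y_2\in Y_2$, $\varphi_y(y_2)=y_2$.

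I would then prove $Y\trid Y_2\subseteq Y_2$. For $y\in Y_1$, condition~(1) says $\varphi_y$ fixes $Y\setminus Y_2$ pointwise, so as a bijection of $Y$ it must permute $Y_2$; for $y\in Y\setminus Y_1$, the reformulation above shows $\varphi_y$ fixes $Y_2$ pointwise. Either way $y\trid Y_2\subseteq Y_2$. Lemma~\ref{lem:rack_indecomposable} now forces $Y_2=\emptyset$ or $Y_2=Y$, and the possibility $Y_2=\emptyset$ is discarded because~(1) would then read $\varphi_{y_1}=\mathrm{id}$ for $y_1\in Y_1$, and Remark~\ref{rem:inner} would give $\varphi_y=\mathrm{id}$ for every $y$, contradicting $n\geq 1$.

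With $Y_2=Y$, the reformulation of~(2) says that for every $y_2\in Y$ the set of elements of $Y$ not fixed by $\varphi_{y_2}$ lies inside $Y_1$. Given any $y\in Y$, the assumption $n\geq 1$ gives $\varphi_y\ne\mathrm{id}$, so there exists $z\in Y$ with $\varphi_y(z)\ne z$; by the crossed-set axiom $\varphi_z(y)\ne y$, which places $y$ in $Y_1$. Hence $Y_1=Y$, completing the proof.

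The main obstacle is the very first step: translating hypothesis~(2), which is a statement about a \emph{number} of non-commuting elements, into the pointwise-fixing statement that feeds Lemma~\ref{lem:rack_indecomposable}. This translation relies squarely on the constancy of cycle structure under indecomposability, together with the $a\trid b=b\iff b\trid a=a$ symmetry of crossed sets; once it is in place, both halves of the conclusion follow by a single application of the indecomposability criterion.
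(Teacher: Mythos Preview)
Your proof is correct and uses the same ingredients as the paper's: the constancy of $n$ under indecomposability, the crossed-set symmetry to turn hypothesis~(2) into the statement that every $\varphi_{y_2}$ fixes $Y\setminus Y_1$ pointwise, and Lemma~\ref{lem:rack_indecomposable}. The only difference is order: the paper first shows $Y\trid Y_1\subseteq Y_1$ (splitting $Y$ into $Y_2$ and $Y\setminus Y_2$) to get $Y_1=Y$, and then deduces $Y_2=Y$ from~(1), whereas you first show $Y\trid Y_2\subseteq Y_2$ (splitting $Y$ into $Y_1$ and $Y\setminus Y_1$) and finish with a direct pigeonhole argument for $Y_1=Y$; the two arguments are dual and equally short.
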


\begin{proof}
  Since $Y$ is indecomposable, the number $n$ is independent of the choice of
  $y\in Y$.
  Since $Y$ is a crossed set, Remark~\ref{rem:crossed_set}\eqref{cs} and the
  definition of $n$
  imply that $y_2\trid Y_1=Y_1$ for all $y_2\in Y_2$.
  Further, $z\trid Y_1=Y_1$ for all $z\in Y\setminus Y_2$ by (1)
  and by Remark~\ref{rem:crossed_set}\eqref{cs}.
  Hence $Y=Y_1$ by Lemma~\ref{lem:rack_indecomposable} and since
  $Y_1\not=\emptyset $.
  Since $Y$ is indecomposable, assumption (1) and
  Lemma~\ref{lem:rack_indecomposable} imply that $Y_2=Y$.
\end{proof}

Let $c:X\times X\to X\times X$ be defined by $c(i,j)=(i\trid j,i)$.
The map $c$ is a solution of the braid equation. Since $X$ is
finite, we have 
$c^n=\mathrm{id}$ for some $n\in\ndN $ with $n\ge 1$.
Let $H$ be the group generated by $c$. The group
$H$ acts on $X\times X$ and the orbits of this action are
$\orbit(i,j)=\{c^{m}(i,j)\mid m\in\mathbb{Z}\}$, where $(i,j)\in X\times X$.

\begin{rem}
\label{rem:orbits}
The group $G$ acts by conjugation on $X$. Also, $G$ acts on $X\times X$ 
diagonally: $g\trid (x,y)=(g\trid x, g\trid y)$ 
for all $g\in G$ and $x,y\in X$.
For all $g\in G$ and all $x,y\in X$ we have $\orbit(g\trid
x,g\trid y)=g\trid\orbit(x,y)$.  In particular, 
for all $i,j\in X$ the
sets $\orbit(i,i\trid j)$ and $\orbit(j\trid i,j)$
have the same cardinality as $\orbit(i,j)$.  By definition we obtain that
$\#\orbit(i,j)=1$ if and only if $i=j$. Similarly, $\#\orbit(i,j)=2$
if and only if $i\ne j$ and $x_{i}$ and $x_{j}$ commute (i.e. $i\trid j=j$).
\end{rem}

The set $X\times X$ is the disjoint union of the orbits under
the action of $H$. For all $n\in\ndN $ let 
\[
l_{n}=\#\{\orbit(i,j):\orbit(i,j)\text{ has }n\text{ elements}\}.
\]
Since $l_1=d$, we obtain that
\begin{equation}
\label{eq:Lcounting_formula}
d+2l_{2}+3l_{3}+\cdots=d^{2}.
\end{equation}

For all $n\in\ndN $ let
\[ k_{n}=\#\{j\in X\mid\#\orbit(1,j)=n\}. \]
Remark \ref{rem:orbits} implies that 
\[
k_{n}=\#\{j\in X\mid\#\orbit(i,j)=n\}
\]
for all $i\in X$ and all
$n\in\ndN $.

\begin{lem}
\label{lem:general}
The rack $X$ has the following properties.
\begin{enumerate}
\item\label{lem:faithful}Let $i,j,k\in X$.
  If $\orbit(i,j)=\orbit(i,k)$ then $j=k$.
  If $\orbit(i,j)=\orbit(k,j)$ then $i=k$.
\item\label{lem:sizes}For all $i,j\in X$ we have $\#\orbit(i,j)\leq d$. 
In particular, $d\ge n$ for all $n\in \ndN $ with $k_n\geq 1$.
\item\label{lem:fixedpoints}For all $i\in X$ the permutation $\varphi_i$ has precisely $k_2+1$ fixed points. In particular,
if $k_2=d-1$ then $X$ is trivial (i.e. $i\trid j=j$ for all $i,j\in X$). 
\item\label{lem:counting_formula} $1+k_2+k_3+\cdots=d$.
\end{enumerate}
\end{lem}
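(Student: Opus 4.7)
My plan is to exploit the fact that, in this subsection, $X$ is a conjugacy class of the group $G$, so $\trid $ is genuine conjugation and we may freely compute products in $G$. The core idea is to describe each $c$-orbit explicitly by a doubly-infinite sequence in $X$ and to extract from it a single conserved quantity in $G$ from which all four statements follow.

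Concretely, I would fix $i,j\in X$, set $p_{-1}=j$, $p_0=i$, and define recursively $p_{n+1}=p_n\trid p_{n-1}$ for all $n\in \ndZ $ (using (R1) to go backwards). A direct induction on the formula $c(a,b)=(a\trid b,a)$ shows
\[ c^n(i,j)=(p_n,p_{n-1}) \quad \text{for all } n\in \ndZ , \]
so $\orbit (i,j)=\{(p_n,p_{n-1})\mid n\in \ndZ \}$. The key observation is that in $G$ the product $p_np_{n-1}$ is a conserved quantity:
\[ p_{n+1}p_n=(p_np_{n-1}p_n^{-1})p_n=p_np_{n-1}, \]
so $p_np_{n-1}=ij$ for every $n$.

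With this invariant in hand, the four statements follow quickly. For \eqref{lem:faithful}, if $\orbit (i,j)=\orbit (i,k)$ then $(i,k)=(p_m,p_{m-1})$ for some $m$, hence $ik=p_mp_{m-1}=ij$, so $k=j$; the other identity is analogous. For \eqref{lem:sizes}, the invariant implies $p_{n-1}=p_n^{-1}ij$, so the orbit element $(p_n,p_{n-1})$ is determined by $p_n\in X$, and therefore $\#\orbit (i,j)\le d$; the ``in particular'' part is immediate from the definition of $k_n$. For \eqref{lem:fixedpoints}, by Remark \ref{rem:orbits} we have $\varphi _i(j)=j$ iff $\#\orbit (i,j)\in \{1,2\}$, i.e.\ iff $j=i$ or $j$ is one of the $k_2$ elements with $\#\orbit (i,j)=2$; this gives $k_2+1$ fixed points, and the case $k_2=d-1$ forces $\varphi _i=\mathrm{id}$ for every $i$. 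Finally \eqref{lem:counting_formula} is just the partition of $X$ according to $\#\orbit (i,\cdot )$, noting that $\#\orbit (i,j)=1$ happens exactly once (at $j=i$), which contributes the leading $1$.

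The one step requiring care is the invariant $p_np_{n-1}=ij$: it is the only genuinely non-formal input, and it is precisely the place where the hypothesis that $X$ sits inside a group (rather than being an abstract rack) is used. Once this identity is in place, the remaining arguments are bookkeeping.
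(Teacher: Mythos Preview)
Your proof is correct and follows essentially the same approach as the paper. The paper's proof also hinges on the observation that $x_lx_m=x_ix_j$ for every $(l,m)\in\orbit(i,j)$ (your conserved quantity $p_np_{n-1}=ij$), deduces \eqref{lem:faithful} by cancellation in $G$, obtains \eqref{lem:sizes} from \eqref{lem:faithful}, and proves \eqref{lem:fixedpoints} and \eqref{lem:counting_formula} by the same orbit-size bookkeeping you describe; your explicit doubly-infinite sequence $(p_n)$ is a mild elaboration but not a different idea.
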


\begin{proof}
(1) Since $c(i,j)=(i\trid j,i)=(x_{i}x_{j}x_{i}^{-1},x_{i})$
and $x_{i}x_{j}=(x_{i}x_{j}x_{i}^{-1})x_{i}$ in $G$, it follows that
$x_ix_j=x_lx_m$ for all $(l,m)\in\orbit(i,j)$.
In particular, 
if $(i,k)\in\orbit(i,j)$ then $x_{i}x_{k}=x_{i}x_{j}$ and hence $j=k$.
Similarly, if $\orbit (i,j)=\orbit (k,j)$ then $x_i x_j=x_k x_j$ and
hence $i=k$.

(2) follows from \eqref{lem:faithful}.

(3)
Let $i,j\in X$. Then $j$ is a fixed point of $\varphi_i$ if and only if $i=j$ 
or $\orbit(i,j)$ has size 2. This gives the claim.

(4)
By (1) the orbits $\orbit(1,i)$ with $1\leq i\leq d$ are disjoint.
Since $\#\orbit(1,1)=1$ and $2\leq\#\orbit(1,i)<\infty$ for all $i\geq 2$,
the claim follows.
\end{proof}

\begin{lem}
\label{lem:l_n}
For all $n\in\ndN $ we have $l_{n}=\frac{dk_{n}}{n}$. 
\end{lem}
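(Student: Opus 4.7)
The plan is to count the set
\[
S_n=\{(i,j)\in X\times X : \#\orbit (i,j)=n\}
\]
in two different ways.

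First I would compute $\#S_n$ by summing row-by-row over the first coordinate. For each fixed $i\in X$, Remark~\ref{rem:orbits} (together with the transitivity of the conjugation action of $G$ on $X$) guarantees that the number of $j\in X$ with $\#\orbit (i,j)=n$ does not depend on the choice of $i$; this is exactly the content of the equality $k_n=\#\{j\in X\mid \#\orbit (i,j)=n\}$ stated right before the lemma. Summing over the $d$ possible values of $i$ therefore yields $\#S_n=d\,k_n$.

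Second I would compute $\#S_n$ by partitioning according to $H$-orbits. Since the orbits of $H$ acting on $X\times X$ partition $X\times X$, and since $S_n$ is a union of those orbits whose cardinality equals $n$, we get $\#S_n=n\,l_n$ by the definition of $l_n$.

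Equating the two expressions gives $d\,k_n=n\,l_n$, hence $l_n=dk_n/n$, as claimed. There is no real obstacle: the only subtle point is the independence of the count $\#\{j\in X\mid \#\orbit (i,j)=n\}$ on the base point $i$, but this has already been established from Remark~\ref{rem:orbits} in the paragraph immediately preceding the lemma, so the proof reduces to a clean double-counting argument.
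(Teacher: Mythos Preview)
Your proof is correct and follows essentially the same double-counting argument as the paper: both count the set $\{(i,j)\in X\times X:\#\orbit(i,j)=n\}$ once via $H$-orbits (giving $n\,l_n$) and once via the transitive $G$-action on the first coordinate (giving $d\,k_n$). The only cosmetic difference is that the paper phrases the second count in terms of $G$-orbits of size $d$ each containing a unique element of the form $(1,j)$, whereas you sum row-by-row over $i$; the substance is identical.
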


\begin{proof}
Let $n\in\ndN $ and let $\Gamma_n=\{(i,j)\in X\times X\mid \#\orbit(i,j)=n\}$.
The set $\Gamma_n$ is invariant under the action of
$H$, and
every $H$-orbit of $\Gamma _n$
has size $n$ by definition of $\Gamma_n$. Hence $\#\Gamma_n=nl_n$.
On the other hand, $\Gamma_n$ is invariant under the diagonal action of $G$ by
Remark \ref{rem:orbits}. Since $G$ acts transitively on $X$, we conclude that
every $G$-orbit of $\Gamma_n$ has size $d$ and contains a unique element
$(1,j)$, where $j\in X$.
Hence $\#\Gamma_n=dk_n$. Therefore $nl_n=dk_n$.
\end{proof}

Let $x,y\in X$. For all $n\in\ndN $ define recursively $a_n,b_n\in X$ by
\begin{align*}
a_0=x,\qquad & b_0=y\\
a_1=x\trid y,\qquad & b_1=y\trid x\\
a_{n+1}=a_n\trid a_{n-1},\qquad & b_{n+1}=b_n\trid b_{n-1},\qquad n\ge 1.
\end{align*}

\begin{lem}
\label{lem:ab}
Let $x,y\in X$ and $n\in \ndN $ with $n\ge 1$. Then the following hold.
\begin{enumerate}
\item\label{item:c}$c^n(x,y)=(a_n,a_{n-1})$.
\item\label{item:axb}$a_n=x\trid b_{n-1}$.
\item\label{item:bya}$b_n=y\trid a_{n-1}$.
\end{enumerate}
\end{lem}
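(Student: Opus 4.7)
My plan is to handle the three claims by a single induction on $n$, starting with (1) and then running (2) and (3) simultaneously.

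For (1), I would do a direct induction on $n$. The base case $n=1$ is immediate: $c(x,y)=(x\trid y,x)=(a_1,a_0)$ by the definition of $c$ and of $a_0,a_1$. For the inductive step, assuming $c^n(x,y)=(a_n,a_{n-1})$, one applies $c$ once more to obtain
\[
c^{n+1}(x,y)=c(a_n,a_{n-1})=(a_n\trid a_{n-1},a_n)=(a_{n+1},a_n),
\]
using only the recursive definition $a_{n+1}=a_n\trid a_{n-1}$.

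For (2) and (3), I would again induct on $n$, treating them jointly because of the symmetric roles of $x,y$ and of $a_n,b_n$. The base $n=1$ is just the definitions: $x\trid b_0=x\trid y=a_1$ and $y\trid a_0=y\trid x=b_1$. The key identity in the inductive step, after rewriting $a_n,a_{n-1}$ via the inductive hypothesis, is the self-distributivity axiom (R2), which gives
\[
a_{n+1}=a_n\trid a_{n-1}=(x\trid b_{n-1})\trid(x\trid b_{n-2})=x\trid(b_{n-1}\trid b_{n-2})=x\trid b_n,
\]
and symmetrically for $b_{n+1}$. This works cleanly for $n\ge 2$, where the inductive hypothesis supplies both $a_n$ and $a_{n-1}$ in the desired form.

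The one delicate point is the first step of this induction, namely verifying $a_2=x\trid b_1$ (and $b_2=y\trid a_1$). Here the inductive hypothesis only gives $a_1=x\trid y$ while $a_0=x$, so (R2) produces
\[
x\trid b_1=x\trid(y\trid x)=(x\trid y)\trid(x\trid x),
\]
and one needs the quandle identity $x\trid x=x$ to identify the right-hand side with $(x\trid y)\trid x=a_1\trid a_0=a_2$. This is available because $X$ is a conjugacy class in $G$, so $X$ is a quandle (cf.\ Remark \ref{rem:crossed_set}\eqref{quandle}). This is the only subtle use of hypotheses beyond the rack axioms, and it is really the main (though minor) obstacle in the argument.
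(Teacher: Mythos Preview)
Your proof is correct and follows the same route as the paper: part (1) by direct induction on $n$ from the definition of $c$, and parts (2)--(3) by induction using axiom (R2), with (3) obtained from (2) by the symmetry in $x,y$. You are right that the passage to $n=2$ requires the quandle identity $x\trid x=x$ (available here via Remark~\ref{rem:crossed_set}\eqref{quandle}, since $X$ is a conjugacy class); the paper's inductive-step display tacitly assumes $n\ge 2$ and does not make this base case explicit, so your treatment is in fact slightly more careful.
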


\begin{proof}
\eqref{item:c} follows by induction on $n$
from the definition of $c$, and \eqref{item:bya} is obtained from 
\eqref{item:axb} by exchanging $x$ and $y$.
We prove \eqref{item:axb} by induction on $n$. 
The case $n=1$ is trivial. The induction step follows from the equations
\[ x\trid b_n=x\trid (b_{n-1}\trid b_{n-2})=
(x\trid b_{n-1})\trid (x\trid b_{n-2})=a_n\trid a_{n-1}=a_{n+1} \]
which are obtained from Axiom~(R2) and the induction hypothesis.
\end{proof}

\begin{notation}
We write $x\blacktriangleright_n y=a_{n-1}$ and $y\blacktriangleright_n
x=b_{n-1}$ for all $n\in \ndN $ with $n\ge 1$. 
By Lemma \ref{lem:ab}(2),(3) we obtain that
\begin{gather}
x\blacktriangleright_n y=\underbrace{x\trid(y\trid(x\cdots))}_{n\text{ elements of }X}.
\end{gather}
For example we have
\begin{align*}
x\blacktriangleright_1 y=x&,\quad x\blacktriangleright_2 y=x\trid y,\\
x\blacktriangleright_3 y=x\trid (y\trid x)&,\quad x\blacktriangleright_4 y=x\trid (y\trid (x\trid y)).
\end{align*}
For any $x,y\in X$ the orbit $\orbit(x,y)$ will also be denoted by
$\obl x\obr $ if
$x=y$ and by
\[ \obl a_{m-2}~\cdots ~a_2~a_1~x~y\obr \]
if $x\not=y$, where $m\ge 2$ is the smallest integer with $c^m(x,y)=(x,y)$.
Lemma~\ref{lem:ab} implies that 
\begin{align} \label{eq:orbitsymm}
  \obl a_{m-2}~\cdots ~a_2~a_1~x~y\obr =
  \obl y~a_{m-2}~a_{m-3}~\cdots ~a_2~a_1~x\obr
\end{align}
and that the elements of $\orbit (x,y)$ with $x\not=y$
are just the pairs $(i,j)$ such that
$i$ and $j$ are consecutive entries in $\obl \cdots ~x~y\obr $ or its cyclic
permutation.
\end{notation}

\begin{lem}
  Assume that $X$ is involutive.
  Let $n\in \ndN $ with $n\ge 2$ and let $i_1,\dots,i_n\in X$ be pairwise
  distinct elements such that $\orbit (i_2,i_1)=\obl i_n~\cdots
  ~i_3~i_2~i_1\obr $. Then $\orbit (i_1,i_2)=\obl
  i_1~i_2~i_3~\cdots ~i_n\obr $.
  \label{lem:invrack}
\end{lem}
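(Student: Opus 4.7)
The plan is to decode the orbit equation for $(i_2,i_1)$ into an explicit cyclic recurrence among $i_1,\dots ,i_n$, use involutivity to reverse that recurrence, and then compute the orbit of $(i_1,i_2)$ by a matching induction.

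First, I would write $a_k$ for the sequence of Lemma~\ref{lem:ab} associated with $(x,y)=(i_2,i_1)$, so that $a_{-1}=i_1$, $a_0=i_2$ and $a_{k+1}=a_k\trid a_{k-1}$. Reading the hypothesis $\orbit(i_2,i_1)=\obl i_n~\cdots ~i_3~i_2~i_1\obr $ position by position forces $a_k=i_{k+2}$ for $k=-1,0,\dots ,n-2$, and the identity $c^n(i_2,i_1)=(i_2,i_1)$ additionally gives $a_{n-1}=i_1$, $a_n=i_2$. Substituting these identifications into the recurrence produces the cyclic system
\begin{equation*}
  i_{j+1}=i_j\trid i_{j-1}\qquad (j\in\ndZ /n\ndZ ),
\end{equation*}
with indices taken modulo $n$ in $\{1,\dots ,n\}$ (so $i_0=i_n$ and $i_{n+1}=i_1$). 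Since $X$ is involutive, applying $\varphi _{i_j}$ to both sides reverses each of these relations to
\begin{equation*}
  i_{j-1}=i_j\trid i_{j+1}\qquad (j\in\ndZ /n\ndZ ).
\end{equation*}

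Next, let $a'_k$ be the sequence of Lemma~\ref{lem:ab} attached to $(i_1,i_2)$, so that $a'_{-1}=i_2$, $a'_0=i_1$ and $a'_{k+1}=a'_k\trid a'_{k-1}$. I would prove by induction on $k\ge 0$ the formula $a'_k=i_{n+1-k}$ (with indices mod $n$). The base cases $a'_{-1}=i_2=i_{n+2}$ and $a'_0=i_1=i_{n+1}$ hold by definition, and the inductive step reads
\begin{equation*}
  a'_{k+1}=a'_k\trid a'_{k-1}=i_{n+1-k}\trid i_{n+2-k}=i_{n-k},
\end{equation*}
where the last equality is the reversed recurrence applied at $j=n+1-k$. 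Because $i_1,\dots ,i_n$ are pairwise distinct, so are $a'_0,a'_1,\dots ,a'_{n-1}$, namely the sequence $i_1,i_n,i_{n-1},\dots ,i_2$; hence the $c$-orbit of $(i_1,i_2)$ has exactly $n$ elements and in standard form equals $\obl a'_{n-2}~\cdots ~a'_1~a'_0~a'_{-1}\obr =\obl i_3~i_4~\cdots ~i_n~i_1~i_2\obr $. The cyclic symmetry~\eqref{eq:orbitsymm} identifies this with $\obl i_1~i_2~i_3~\cdots ~i_n\obr $, as required.

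The only point where involutivity enters is the reversal of the cyclic recurrence; once that is in hand, the computation of $\orbit(i_1,i_2)$ is a routine induction, so I do not anticipate any real obstacle beyond keeping the cyclic index bookkeeping consistent.
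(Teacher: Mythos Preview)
Your proof is correct and follows essentially the same approach as the paper's: both extract the cyclic recurrence $i_{j+1}=i_j\trid i_{j-1}$ from the hypothesis, reverse it via involutivity to $i_{j-1}=i_j\trid i_{j+1}$, and then read off the orbit of $(i_1,i_2)$. The paper simply states that the reversed recurrence ``implies the claim'', whereas you spell out the induction on the $a'_k$ sequence and the orbit-size check explicitly; the extra detail is sound and the bookkeeping is consistent.
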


\begin{proof}
  By assumption we have $i_l\trid i_{l-1}=i_{l+1}$ and
  $c(i_l,i_{l-1})=(i_{l+1},i_l)$ for all $l\in
  \{1,2,\dots,n\}$, where the indices are considered as elements in $\ndZ
  /n\ndZ $.
  Since $X$ is involutive, it follows that $i_l\trid i_{l+1}=i_{l-1}$ for all
  $l\in \ndZ /n\ndZ $. This implies the claim.
\end{proof}

Recall that $G_X$ is the enveloping group of $X$. Since $G$ is generated
by $X$, there is a unique group epimorphism $G_X\to G$ induced by the
identity on $X$. Hence $X$ is an injective rack.
Let $\Phi:X\to G_X$ be defined by
$x\mapsto\overline{x}$, where $\overline{x}$ is the coset of $x$ in $G_X$.
Following the notation in \cite[page 7]{MR1778802}, for all
$v,w\in G_X$ and all $n\in\ndN $ we write 
\begin{gather*}
\mathrm{Prod}(v,w;0)=1,\quad
\mathrm{Prod}(v,w;1)=v,\quad
\mathrm{Prod}(v,w;2)=vw,\\
\mathrm{Prod}(v,w;n)=\underbrace{vwvw\cdots}_{n\text{ factors}}.
\end{gather*}

\begin{lem}
\label{lem:coxeter}
Let $x,y\in X$. In $G_X$ we have
\[
\overline{x\blacktriangleright_n y}=\mathrm{Prod}(\overline{x},\overline{y};n)\cdot\mathrm{Prod}(\overline{x},\overline{y},n-1)^{-1}
\]
for all integers $n\ge 1$.
\end{lem}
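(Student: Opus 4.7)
The plan is induction on $n\ge 1$. The base case $n=1$ is immediate from the definitions: $x\blacktriangleright_1 y=x$, while $\mathrm{Prod}(\overline{x},\overline{y};1)\cdot\mathrm{Prod}(\overline{x},\overline{y};0)^{-1}=\overline{x}\cdot 1=\overline{x}$.

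For the inductive step I first extract a clean recursion. By definition $x\blacktriangleright_{n+1}y=a_n$, and part \eqref{item:axb} of Lemma~\ref{lem:ab} gives $a_n=x\trid b_{n-1}$; since $b_{n-1}=y\blacktriangleright_n x$, this reads
\[
x\blacktriangleright_{n+1} y=x\trid (y\blacktriangleright_n x).
\]
Projecting to $G_X$ and using the defining relation $\overline{x\trid z}=\overline{x}\,\overline{z}\,\overline{x}^{-1}$ yields
\[
\overline{x\blacktriangleright_{n+1} y}=\overline{x}\cdot\overline{y\blacktriangleright_n x}\cdot\overline{x}^{-1}.
\]

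Now I invoke the induction hypothesis at step $n$ with the roles of $x$ and $y$ swapped, obtaining $\overline{y\blacktriangleright_n x}=\mathrm{Prod}(\overline{y},\overline{x};n)\cdot\mathrm{Prod}(\overline{y},\overline{x};n-1)^{-1}$. Two elementary identities about alternating products close the argument: prepending $\overline{x}$ to an alternating word of length $n$ starting with $\overline{y}$ produces an alternating word of length $n+1$ starting with $\overline{x}$, giving
\[
\overline{x}\cdot\mathrm{Prod}(\overline{y},\overline{x};n)=\mathrm{Prod}(\overline{x},\overline{y};n+1),
\]
while the mirror manipulation (absorbing $\overline{x}^{-1}$ into the inverse factor) produces
\[
\mathrm{Prod}(\overline{y},\overline{x};n-1)^{-1}\cdot\overline{x}^{-1}=\bigl(\overline{x}\cdot\mathrm{Prod}(\overline{y},\overline{x};n-1)\bigr)^{-1}=\mathrm{Prod}(\overline{x},\overline{y};n)^{-1}.
\]
Combining these two identities turns the right-hand side of the previous display into $\mathrm{Prod}(\overline{x},\overline{y};n+1)\cdot\mathrm{Prod}(\overline{x},\overline{y};n)^{-1}$, which is exactly the claim at $n+1$.

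I do not anticipate any real obstacle: the content of the proof is a one-step recursion coming from Lemma~\ref{lem:ab}\eqref{item:axb} and a swap of $x,y$ in the induction hypothesis; the only potentially delicate point is keeping track of which factor an alternating product begins and ends with in the even versus odd length cases, but the two $\mathrm{Prod}$ identities above hold uniformly in $n$.
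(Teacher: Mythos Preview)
Your proof is correct and follows essentially the same approach as the paper: induction on $n$, the recursion $x\blacktriangleright_{n+1}y=x\trid(y\blacktriangleright_n x)$ from Lemma~\ref{lem:ab}\eqref{item:axb}, application of the induction hypothesis with $x$ and $y$ swapped, and the two elementary $\mathrm{Prod}$ identities. Your version is in fact more carefully written; the paper's displayed chain has an off-by-one slip in the indices in the third and fourth lines, but the intended argument is exactly the one you give.
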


\begin{proof}
The case $n=1$ is trivial. 
Assume now that the claim holds for $n$. Then
\begin{align*}
\Phi(x\blacktriangleright_{n+1} y)&=\Phi(x\trid(y\blacktriangleright_n x))\\
&=\overline{x}\cdot\Phi(y\blacktriangleright_n x)\cdot\overline{x}^{-1}\\
&=\overline{x}\cdot\mathrm{Prod}(y,x;n-1)\cdot\mathrm{Prod}(y,x;n-2)^{-1}\cdot\overline{x}^{-1}\\
&=\mathrm{Prod}(x,y;n)\cdot\mathrm{Prod}(x,y,n-1)^{-1}.
\end{align*}
This completes the proof.
\end{proof}

\begin{prop}
\label{prop:orbits}
Let $x,y\in X$ and $n\in\ndN $ with $n\ge 1$.
The following are equivalent.
\begin{enumerate}
\item\label{item:orbit_size}The $H$-orbit $\orbit(x,y)$ has size $n$.
\item\label{item:xBy=y}$x\blacktriangleright_{n} y=y$ and
  $x\blacktriangleright_k y\ne y$ for all $k\in\{1,2,\dots ,n-1\}$.
\item\label{item:yBx=x}$y\blacktriangleright_{n} x=x$ and
  $y\blacktriangleright_k x\ne x$ for all $k\in\{1,2,\dots ,n-1\}$.
\item\label{item:prod}$\mathrm{Prod}(\overline{x},\overline{y};n)=
\mathrm{Prod}(\overline{y},\overline{x};n)$ and 
$\mathrm{Prod}(\overline{x},\overline{y};k)\ne\mathrm{Prod}(\overline{y},\overline{x};k)$
for all $k\in\{1,2,\dots ,n-1\}$.
\end{enumerate}
\end{prop}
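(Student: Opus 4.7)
The plan is to establish the chain $(2)\Leftrightarrow (4)\Leftrightarrow (3)$ using the enveloping group, and then tie in $(1)$ via Lemma~\ref{lem:ab}(1). First I rewrite $(1)$: by Lemma~\ref{lem:ab}(1), $c^k(x,y)=(a_k,a_{k-1})$, so $\orbit(x,y)$ has size $n$ exactly when $(a_n,a_{n-1})=(x,y)$ and $n$ is the smallest positive integer with this property; that is, $a_{n-1}=y$ \emph{and} $a_n=x$, minimally.

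For $(2)\Leftrightarrow (4)$ I would apply Lemma~\ref{lem:coxeter}, which gives $\overline{a_{n-1}}=\overline{x\blacktriangleright_n y}=\mathrm{Prod}(\overline{x},\overline{y};n)\cdot\mathrm{Prod}(\overline{x},\overline{y};n-1)^{-1}$ in $G_X$. Rearranging, $\overline{a_{n-1}}=\overline{y}$ if and only if $\mathrm{Prod}(\overline{x},\overline{y};n)=\overline{y}\cdot\mathrm{Prod}(\overline{x},\overline{y};n-1)$, and the right-hand side equals $\mathrm{Prod}(\overline{y},\overline{x};n)$ by the elementary observation that left-multiplying the alternating word of length $n-1$ starting with $\overline{x}$ by $\overline{y}$ yields the alternating word of length $n$ starting with $\overline{y}$. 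Since $X$ is injective (as noted at the start of Subsection~\ref{ss:braided}), $\overline{a_{n-1}}=\overline{y}$ iff $a_{n-1}=y$. Applying this at each $k\in\{1,\dots,n\}$ delivers $(2)\Leftrightarrow (4)$. The equivalence $(3)\Leftrightarrow (4)$ then follows by the $x\leftrightarrow y$ symmetry: swapping replaces $a_{k-1}=x\blacktriangleright_k y$ with $b_{k-1}=y\blacktriangleright_k x$ and leaves $(4)$ invariant.

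It remains to bridge with $(1)$. For $(2)\Rightarrow (1)$: given $(2)$, the already-established $(2)\Leftrightarrow (3)$ yields also $b_{n-1}=x$, whence Lemma~\ref{lem:ab}(2) together with Remark~\ref{rem:crossed_set}\eqref{quandle} gives $a_n=x\trid b_{n-1}=x\trid x=x$. Thus $c^n(x,y)=(x,y)$, so $\orbit(x,y)$ has size at most $n$; if the size were some $k<n$ then $a_{k-1}=y$, contradicting the minimality in $(2)$. Conversely, $(1)\Rightarrow (2)$: size $n$ gives $a_{n-1}=y$, and if $a_{k-1}=y$ for some smallest $1\le k<n$, then $(2)$ would hold at $k$, and the implication $(2)\Rightarrow (1)$ just proved would force the orbit size to be at most $k<n$, a contradiction.

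The main obstacle is the apparent asymmetry: condition $(2)$ constrains only the second coordinate $a_{n-1}$ of $c^n(x,y)$, whereas size-$n$ of the orbit needs both coordinates. The natural bridge is the manifestly symmetric condition $(4)$, which via Lemma~\ref{lem:coxeter} is equivalent to $(2)$ and, by $x\leftrightarrow y$ invariance, also to $(3)$; the resulting $b_{n-1}=x$, combined with Lemma~\ref{lem:ab}(2) and the quandle axiom, is what recovers the missing coordinate $a_n=x$.
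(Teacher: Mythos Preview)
Your proof is correct and shares with the paper the use of Lemma~\ref{lem:coxeter} (plus injectivity of $X$) for $(2)\Leftrightarrow(4)$ and the $x\leftrightarrow y$ symmetry for $(3)\Leftrightarrow(4)$. The one point of divergence is the link between $(1)$ and $(2)$. The paper handles this directly via Lemma~\ref{lem:general}\eqref{lem:faithful}: since $(a_k,a_{k-1})$ and $(x,y)$ lie in the same $H$-orbit, $a_{k-1}=y$ alone forces $a_k=x$, so $c^k(x,y)=(x,y)$ is equivalent to $a_{k-1}=y$. You instead route through the already established $(2)\Leftrightarrow(3)$ to obtain $b_{n-1}=x$, and then recover the missing coordinate from Lemma~\ref{lem:ab}\eqref{item:axb} and the quandle identity $x\trid x=x$. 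Both arguments are short; the paper's is slightly more self-contained for $(1)\Leftrightarrow(2)$, while yours exploits the symmetry and avoids the separate appeal to Lemma~\ref{lem:general}.
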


\begin{proof}
Recall that \eqref{item:orbit_size} holds if and only if $c^n(x,y)=(x,y)$ and
$c^k(x,y)\ne(x,y)$ for all $k\in\{1,2,\dots ,n-1\}$.
Thus \eqref{item:orbit_size} is equivalent to \eqref{item:xBy=y} by Lemma \ref{lem:ab}\eqref{item:c} 
and Lemma \ref{lem:general}\eqref{lem:faithful}.
The equivalence between \eqref{item:xBy=y} and \eqref{item:prod} follows from
Lemma \ref{lem:coxeter}.
Exchanging $x$ and $y$ in the last argument one concludes that
\eqref{item:prod} and \eqref{item:yBx=x} are
equivalent, which finishes the proof of the proposition.
\end{proof}

\begin{cor}
\label{cor:orbit3}
Let $x,y,z\in X$. Assume that the orbit $\orbit(x,y)$ has size 3. If $x\trid y=z$ 
then $y\trid z=x$ and $z\trid x=y$. 
\end{cor}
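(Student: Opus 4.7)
The plan is to extract the claim directly from Lemma~\ref{lem:ab}\eqref{item:c} applied to the pair $(x,y)$ with $n=3$. Since $\orbit (x,y)$ has size $3$ by hypothesis, we know $c^3(x,y)=(x,y)$, and Lemma~\ref{lem:ab}\eqref{item:c} tells us that $c^3(x,y)=(a_3,a_2)$ in the recursively defined sequence $a_0=x$, $a_1=x\trid y$, $a_{n+1}=a_n\trid a_{n-1}$. So the entire argument reduces to reading off $a_2$ and $a_3$ and matching them with $y$ and $x$ respectively.

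First I would use the hypothesis $x\trid y=z$ to identify $a_1=z$. Then $a_2=a_1\trid a_0=z\trid x$, and from $a_2=y$ one immediately concludes $z\trid x=y$, which is one of the two desired identities. Next, $a_3=a_2\trid a_1=(z\trid x)\trid z$, and substituting the identity just obtained gives $a_3=y\trid z$; combined with $a_3=x$ this yields $y\trid z=x$, the other desired identity.

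There is no genuine obstacle: the proof is essentially a one-line unwinding of the recursion in Lemma~\ref{lem:ab}. The only mild care needed is to make sure one substitutes the already-established relation $z\trid x=y$ into the expression for $a_3$ in the correct order, rather than trying to compute $a_3$ from scratch in terms of $x$ and $y$ alone (which would force one to invoke the rack axiom (R2) unnecessarily). Alternatively, the same conclusion could be derived from Proposition~\ref{prop:orbits}\eqref{item:yBx=x} applied symmetrically in both coordinates, but the direct route via the $a_n$'s is shorter and needs no further appeal.
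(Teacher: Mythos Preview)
Your proof is correct and is essentially the paper's argument: the corollary is stated without proof immediately after Proposition~\ref{prop:orbits}, and the equivalence $(1)\Leftrightarrow(2)$ there is proved exactly via Lemma~\ref{lem:ab}\eqref{item:c}, which is precisely the computation you carry out for $n=3$. Your direct unwinding of the recursion $a_0,a_1,a_2,a_3$ is if anything slightly more transparent than quoting the proposition.
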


\begin{cor}
\label{cor:orbit4}
Let $x,y\in X$. Assume that the orbit $\orbit(x,y)$ has size 4. If $X$ is involutive
then $x\trid(y\trid x)=y\trid x$ and
$y\trid(x\trid y)=x\trid y$.
\end{cor}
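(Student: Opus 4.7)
The plan is to unpack the size-$4$ hypothesis through Proposition~\ref{prop:orbits} and then exploit involutivity once on each side.

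First I would apply Proposition~\ref{prop:orbits}, specifically the equivalence of \eqref{item:orbit_size} and \eqref{item:xBy=y}, to the orbit $\orbit (x,y)$. Since this orbit has size $4$, we obtain $x\blacktriangleright_4 y=y$, which by the definition of the notation means
\[
x\trid \bigl(y\trid (x\trid y)\bigr)=y.
\]
Now I would act on both sides by $\varphi_x=x\trid(-)$. Since $X$ is involutive, $\varphi_x^2=\mathrm{id}$, so the left-hand side collapses to $y\trid (x\trid y)$ while the right-hand side becomes $x\trid y$. This yields $y\trid (x\trid y)=x\trid y$, which is the second claimed equality.

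For the first equality, I would use the symmetric form given by the equivalence of \eqref{item:orbit_size} and \eqref{item:yBx=x} in Proposition~\ref{prop:orbits}: the orbit $\orbit(x,y)$ has size $4$ if and only if $y\blacktriangleright_4 x=x$, i.e.
\[
y\trid \bigl(x\trid (y\trid x)\bigr)=x.
\]
Applying $\varphi_y$ to both sides and using $\varphi_y^2=\mathrm{id}$ produces $x\trid (y\trid x)=y\trid x$, as required. Alternatively, one can simply swap the roles of $x$ and $y$ in the previous paragraph; either way the argument is symmetric.

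There is no real obstacle here: the content of the corollary is just that involutivity turns the period-$4$ braid identity into a pair of fixed-point relations one step shorter, and everything follows from a single application of $\varphi_x^2=\mathrm{id}$ (respectively $\varphi_y^2=\mathrm{id}$) to the characterization supplied by Proposition~\ref{prop:orbits}.
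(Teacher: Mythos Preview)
Your proof is correct and follows exactly the approach the paper intends: the corollary is placed immediately after Proposition~\ref{prop:orbits} with no separate proof, so the implicit argument is precisely to read off $x\blacktriangleright_4 y=y$ (resp.\ $y\blacktriangleright_4 x=x$) and cancel one $\varphi_x$ (resp.\ $\varphi_y$) using involutivity.
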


Now we start to study in more detail the racks satisfying the inequality 
\begin{gather}
\label{eq:N1}
d+l_{2}+l_{3}+\cdots\geq\frac{d(d-1)}{2}.
\end{gather}
This condition is motivated by the structure of the degree two part of the Nichols algebra 
of $(X,q)$, where $q$ is a two-cocycle, see Section \ref{YD}.
Our main achievement in this section
will be the classification of these racks, see 
Theorem \ref{thm:racks} below.

Lemma \ref{lem:l_n} and Lemma~\ref{lem:general}\eqref{lem:counting_formula}
imply that \eqref{eq:N1} is equivalent to 
\begin{equation}
\sum_{n\ge3}\frac{n-2}{2n}k_{n}\leq1.\label{eq:QRcondition}
\end{equation}

\begin{notation}
We write $\mathcal{S}=\sum_{n\geq3}\frac{n-2}{2n}k_n$.
\end{notation}

\begin{rem}
\label{rem:kn_bounds}If \eqref{eq:QRcondition} is satisfied,
then $k_{n}\leq\frac{2n}{n-2}$ for all $n\geq3$. 
Since $k_n\in\mathbb{Z}$ for all $n$, 
this means that 
$k_{3}\leq6$, $k_{4}\leq4$, $k_{5}\leq3$, $k_{6}\leq3$
and $k_{n}\leq2$ for all $n>6$. 
\end{rem}

\begin{lem}
\label{lem:about_k2}
Let $m=d-k_2-1$. If Relation \eqref{eq:QRcondition} holds then $2\leq m\leq 6$.
\end{lem}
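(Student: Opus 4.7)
The plan is to reinterpret $m$ using Lemma~\ref{lem:general}\eqref{lem:counting_formula}, which gives $1+k_2+k_3+\cdots =d$ and hence $m = \sum_{n\ge 3}k_n$. With this identification both inequalities reduce to short arguments, one purely arithmetic and one combinatorial.

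For the upper bound $m\le 6$, the crucial observation is that the coefficient $\frac{n-2}{2n}$ in $\mathcal{S}$ is an increasing function of $n$ for $n\ge 3$ and attains its minimum value $\frac16$ at $n=3$. Therefore
\[
\mathcal{S} \;=\; \sum_{n\ge 3}\frac{n-2}{2n}\,k_n \;\ge\; \frac16 \sum_{n\ge 3}k_n \;=\; \frac{m}{6},
\]
and the hypothesis $\mathcal{S}\le 1$ gives $m\le 6$ at once.

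For the lower bound $m\ge 2$ I would exploit the combinatorial meaning of $m$: by Lemma~\ref{lem:general}\eqref{lem:fixedpoints} every permutation $\varphi_i$ has exactly $k_2+1$ fixed points, so it moves exactly $m$ elements of $X$. The case $m=0$ is excluded as follows: if $\varphi_1=\mathrm{id}$, then by the indecomposability of $X$ together with Remark~\ref{rem:inner} every $\varphi_i$ is trivial, so $x_ix_jx_i^{-1}=x_j$ for all $i,j\in X$; since $G$ is generated by $X$ this forces $G$ to be abelian, whence the conjugacy class $X$ reduces to a singleton, contradicting $d>1$. The case $m=1$ is excluded because no permutation of a set can have exactly one non-fixed point. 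This lower bound is the only slightly delicate step; the only possible obstacle is remembering to invoke the full Subsection~\ref{ss:braided} hypothesis (that $X$ is a conjugacy class in $G=\langle X\rangle$ with $d>1$) in order to rule out the trivial-rack scenario.
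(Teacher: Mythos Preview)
Your proof is correct. The lower-bound argument ($m\ge 2$) is essentially the paper's: the paper also rules out $m=0$ by invoking Lemma~\ref{lem:general}\eqref{lem:fixedpoints} (which concludes that $X$ is trivial when $k_2=d-1$) together with indecomposability and $d>1$, and rules out $m=1$ via the same fixed-point count.

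Your upper-bound argument is actually cleaner than the paper's. The paper rewrites $\frac{n-2}{2n}=\frac12-\frac1n$, deduces $\sum_{n\ge3}\frac{k_n}{n}\ge\frac{m-2}{2}$, and then sandwiches $\sum_{n\ge3}\frac{1}{2n}k_n$ between $\frac{m-2}{4}$ and $\mathcal S\le1$. You bypass this intermediate quantity entirely by using the single inequality $\frac{n-2}{2n}\ge\frac16$, which yields $m\le 6$ in one line. Both reach the same bound; your route is just more direct.
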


\begin{proof}
If $k_{2}=d-1$ then $X$ is trivial. Since $X$ is indecomposable, this is a contradiction 
to $d>1$. The case $k_{2}=d-2$ is impossible by Lemma \ref{lem:general}\eqref{lem:fixedpoints}.
Therefore $m\geq2$. 
Relation \eqref{eq:QRcondition} and
Lemma~\ref{lem:general}\eqref{lem:counting_formula}
imply that
\[
\sum_{n\geq3}\frac{k_n}{n}\geq\sum_{n\geq3}\frac{k_n}{2}-1
=\frac{d-3-k_2}{2}=\frac{m-2}{2}.
\]
Thus 
\[
\frac{m-2}{4}\leq\sum_{n\geq3}\frac{1}{2n}k_{n}\leq
\sum_{n\geq 3}\frac{n-2}{2n}k_{n}\leq 1.
\]
Therefore $m\leq6$. 
\end{proof}


\begin{thm}
\label{thm:racks}
Let $G$ be a group and let $X$ be a conjugacy class of $G$. 
Assume that $X$ is finite and generates $G$. The following are equivalent.
\end{thm}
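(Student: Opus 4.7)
The plan is to prove both directions of the equivalence by tightly exploiting the numerical invariants $d$, $m=d-k_2-1$ and the profile $(k_3,k_4,\dots)$ developed in Lemmas~\ref{lem:general}, \ref{lem:l_n} and \ref{lem:about_k2}. For the easy direction, I would take each rack in the expected list (the dihedral $\mathbb{D}_3$, the tetrahedron $\mathcal{T}$, the racks $\mathcal{A},\mathcal{B},\mathcal{C}$ from Example~\ref{exa:racks}, and the affine racks $\mathrm{Aff}(5,2), \mathrm{Aff}(5,3), \mathrm{Aff}(7,3), \mathrm{Aff}(7,5)$) and compute the orbits of $c$ directly, either by computing the iterated expressions $x\blacktriangleright_n y$ via Proposition~\ref{prop:orbits} or by working in the enveloping group using Lemma~\ref{lem:coxeter}. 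From the resulting values $k_n$ one reads off that $\mathcal{S}\leq 1$.

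The hard direction is the classification. Assuming \eqref{eq:QRcondition}, Remark~\ref{rem:kn_bounds} gives $k_3\leq 6$, $k_4\leq 4$, $k_5\leq 3$, $k_6\leq 3$ and $k_n\leq 2$ for $n>6$, while Lemma~\ref{lem:about_k2} forces $m=d-k_2-1\in\{2,3,4,5,6\}$ and Lemma~\ref{lem:general}\eqref{lem:counting_formula} gives $\sum_{n\geq 3} k_n=m$. I would enumerate all numerical profiles $(k_3,k_4,k_5,\dots)$ compatible with these bounds. Since by Remark~\ref{rem:inner} every $\varphi_i$ has the same cycle structure, the profile dictates the cycle type of a single $\varphi_1$ (which has $k_2+1$ fixed points by Lemma~\ref{lem:general}\eqref{lem:fixedpoints} and whose non-fixed part decomposes according to the orbits of sizes $\geq 3$). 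This drastically reduces the number of possible cycle structures at each rank.

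For each surviving profile, I would argue as follows. First, by Lemma~\ref{lem:general}\eqref{lem:sizes} we have $d\geq n$ whenever $k_n\geq 1$, which together with $m\leq 6$ restricts $d$. Then for each $d$ I would reconstruct the rack: the action of $\Inn X$ on $X$ is determined by the cycle type plus the transitivity coming from $X$ being a conjugacy class, so in low ranks one may fix $\varphi_1$ and propagate through the relation $\varphi_{i\trid j}=\varphi_i\varphi_j\varphi_i^{-1}$ of \eqref{eq:conjugation}, using the indecomposability criterion Lemma~\ref{lem:cs_indecomposable} together with the crossed set relations of Remark~\ref{rem:crossed_set} and the auxiliary Lemmas~\ref{lem:racks_commute}, \ref{lem:racks_abcd}, \ref{lem:involutions_abc}. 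In each profile the constraints become rigid enough to pin down $X$ up to isomorphism as either a dihedral rack, $\mathcal{T}$, $\mathcal{A}$, $\mathcal{B}$, $\mathcal{C}$, or an affine rack over $\mathbb{F}_5$ or $\mathbb{F}_7$; the involutive profiles land in $\{\mathbb{D}_3,\mathcal{A},\mathcal{C},\mathrm{Aff}(5,2)\text{-type}\}$ thanks to Corollary~\ref{cor:orbit4} and Lemma~\ref{lem:invrack}, whereas profiles with $k_n\geq 1$ for odd $n\geq 3$ are handled using Corollary~\ref{cor:orbit3} and comparison with the affine models.

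I expect the main obstacle to be the mid-rank cases, in particular $d\in\{5,6,7\}$ with several nonzero $k_n$ simultaneously, where the numerics alone admit several candidate profiles and one has to rule out "fake" profiles that cannot be realized by any conjugacy class. Here the plan is to combine cycle-type rigidity from Remark~\ref{rem:inner} with a careful local analysis of triples $(i,j,k)$ using Lemma~\ref{lem:racks_commute} and Lemma~\ref{lem:racks_abcd} to derive contradictions, isolating in each case a distinguished subrack that must already exhaust $X$ by Lemma~\ref{lem:cs_indecomposable}. Once each case is either identified with a rack on the list or excluded, the equivalence of the conditions follows.
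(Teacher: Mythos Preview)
Your plan matches the paper's proof: branch on $m=\sum_{n\ge3}k_n\in\{2,\dots,6\}$ via Lemma~\ref{lem:about_k2}, then on the cycle type of $\varphi_1$, reconstruct the remaining $\varphi_i$ through \eqref{eq:conjugation} and the auxiliary lemmas you cite, and finally invoke Lemma~\ref{lem:cs_indecomposable} to pin down $d$. Three details need adjusting before this will run: the profile $(k_3,k_4,\dots)$ constrains but does \emph{not} determine the cycle type of $\varphi_1$ (Remark~\ref{rem:orbits} only forces orbit sizes to be constant along each cycle of $\varphi_1$, so for $m=4$ the paper must treat both $(2\,3)(4\,5)$ and $(2\,3\,4\,5)$ separately, and several profiles occur under each); the inequality $m\le 6$ gives no a priori upper bound on $d$ since $k_2=d-1-m$ is unconstrained, so $d$ is fixed only a posteriori by Lemma~\ref{lem:cs_indecomposable} once a closed set of permutations has been produced; and $\mathrm{Aff}(5,2)$, $\mathrm{Aff}(5,3)$ are not involutive (each $\varphi_i$ is a $4$-cycle), so they arise in the $\varphi_1=(2\,3\,4\,5)$ branch of $m=4$ rather than through Corollary~\ref{cor:orbit4} or Lemma~\ref{lem:invrack}.
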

\begin{enumerate}
\item $\sum_{n\ge3}\frac{n-2}{2n}k_{n}\leq1.$
\item The rack $X$ is one of the racks listed in Table \ref{tab:racks}.
\end{enumerate}

\begin{proof}
First we prove that $(1)$ implies $(2)$. 
Since $X$ generates $G$, Lemma \ref{lem:about_k2} yields that 
$k_{3}+k_{4}+\cdots=m$, where
$2\leq m\leq6$. We split the proof in several subsections. The rack $\mathbb{D}_3$
appears when $m=2$, see Subsection \ref{sub:case_m=00003D3}.
The rack $\mathcal{T}$ appears in 
Subsection \ref{sub:case_m=00003D4}. The affine racks of $\mathbb{F}_{5}$
(resp. $\mathbb{F}_{7}$) appear in Subsection \ref{sub:case_m=00003D5}
(resp. \ref{sub:case_m=00003D7}). The rack $\mathcal{B}$ 
appears in Subsection \ref{sub:case_m=00003D5}.
The rack $\mathcal{A}$ 
(resp. $\mathcal{C}$) 
appears in Subsection \ref{sub:case_m=00003D5}
(resp. \ref{sub:case_m=00003D7}).

Now we prove that $(2)$ implies $(1)$. These racks are explained in Examples
\ref{exa:racks} and \ref{exa:affine}. The computations appear 
inside Subsections \ref{sub:case_m=00003D3}, \ref{sub:case_m=00003D4}, 
\ref{sub:case_m=00003D5} and \ref{sub:case_m=00003D7}. 
Also, these computations can be easily done with \textsf{GAP} 
\cite{GAP, RIG}. It turns out that $k_n=0$ in all examples, where $n>5$.
In Table~\ref{tab:racks} we record all numbers
$k_n$ with $2\le n\le 4$.
\end{proof}

\begin{table}[ht]
\caption{Racks satisfying Condition \eqref{eq:QRcondition}}
\begin{center}
\label{tab:racks}
\begin{tabular}{|c|c|c|c|c|c|c|}
\hline 
Rack & $d$ & $k_{2}$ & $k_{3}$ & $k_{4}$ & $\mathcal{S}$\tabularnewline
\hline 
$\mathbb{D}_{3}$ & 3 & 0 & 2 & 0 & $\frac{1}{3}$\tabularnewline
\hline 
$\mathcal{T}$ & 4 & 0 & 3 & 0 & $\frac{1}{2}$\tabularnewline
\hline 
$\mathrm{Aff}(5,2)$ & 5 & 0 & 0 & 4 & $1$\tabularnewline
\hline 
$\mathrm{Aff}(5,3)$ & 5 & 0 & 0 & 4 & $1$\tabularnewline
\hline 
$\mathcal{A}$ & 6 & 1 & 4 & 0 & $\frac{2}{3}$\tabularnewline
\hline 
$\mathcal{B}$ & 6 & 1 & 4 & 0 & $\frac{2}{3}$\tabularnewline
\hline 
$\mathrm{Aff}(7,3)$ & 7 & 0 & 6 & 0 & $1$\tabularnewline
\hline 
$\mathrm{Aff}(7,5)$ & 7 & 0 & 6 & 0 & $1$\tabularnewline
\hline 
$\mathcal{C}$ & 10 & 3 & 6 & 0 & $1$\tabularnewline
\hline
\end{tabular}
\par\end{center}
\end{table}

\section{The proof of Theorem \ref{thm:racks}}

This section is devoted to the proof of Theorem \ref{thm:racks}.

\subsection{The case $k_{3}+k_{4}+\cdots=2$.\label{sub:case_m=00003D3}}

By Lemma \ref{lem:general}\eqref{lem:fixedpoints},\eqref{lem:counting_formula}
every permutation $\varphi_{i}$, where $i\in X$, 
is a transposition. 
Without loss of generality 
we may assume that $\varphi_1=(2\,3)$. 
Hence $\#\orbit(1,i)>2$ if and only if $i\in\{2,3\}$. 
Moreover, 
\[
\#\orbit(1,3)=\#(1\trid\orbit(1,2))=\#\orbit(1,2)
\]
by Remark \ref{rem:orbits}.
Let $n=\#\orbit(1,2)$. 
Then
\begin{gather}
\label{case:m=2}
k_n=2 \text{ and }k_m=0 \text{ for all $m\ge 3$, $m\ne n$}.
\end{gather}
Lemma \ref{lem:general}\eqref{lem:sizes} implies that $d\geq n$.
We split the proof in three steps.

\textit{Step 1.} $n=3$.
The definition of $c$ implies that
\[ \orbit(1,2)=\obl 3~1~2\obr . \]
Hence $\varphi_{1}=(2\,3)$, $\varphi_{2}=(1\,3)$ and $\varphi_{3}=(1\,2)$.
Thus $X=\{1,2,3\}$ by Lemma~\ref{lem:cs_indecomposable}
with $Y_1=Y_2=\{1,2,3\}$.
Therefore $X$ is the dihedral rack of 3 elements.

\textit{Step 2.} $n=4$.
Corollary \ref{cor:orbit4} with $(x,y)=(1,3)$
implies that $1\trid(3\trid1)=3\trid1$. 
Since $1\trid2\ne2$ and $1\trid3\ne3$ we conclude that
$3\trid1\notin\{1,2,3\}$. Let $4=3\trid1$. Then
\[
\orbit (1,2)= \obl 4~3~1~2 \obr .
\]
Therefore $\varphi_{4}=(2\,3)$ and $\varphi_{2}=\varphi_{3}=(1\,4)$. 
Since $X$ is indecomposable, this is a contradiction to
Lemma \ref{lem:cs_indecomposable} with $Y_1=\{2,3\}$, $Y_2=\{1,4\}$.

\textit{Step 3.} $n>4$. Let $y_4,y_5,\dots,y_n\in X$ such that
\[ \orbit (1,2)=\obl y_n~\dots y_5~y_4~3~1~2 \obr .\]
On the one hand, by acting with $1$ we obtain that
\[ \orbit (1,3)=\obl y_n~\dots y_5~y_4~2~1~3 \obr .\]
On the other hand,
$\orbit (1,3)=\obl y_4~y_5~\dots y_n~2~1~3 \obr $ by
Lemma~\ref{lem:invrack} and Equation~\eqref{eq:orbitsymm}.
This is a contradiction to $y_4\not=y_n$.

\subsection{The case $k_{3}+k_{4}+\cdots=3$\label{sub:case_m=00003D4}}

By Lemma \ref{lem:general}\eqref{lem:fixedpoints},\eqref{lem:counting_formula}
we may assume 
without loss of generality that $\varphi_{1}=(2\,3\,4)$. 
Let $n=\#\orbit(1,2)$. 
By Remark \ref{rem:orbits} we conclude that
\[
n=\#\orbit(1,3)=\#\orbit(1,4).
\]
Therefore
$k_n=3$ and $k_m=0$ for all $m\ge 3$ with $m\not=n$. 
Remark~\ref{rem:kn_bounds} implies that 
$n\in\{3,4,5,6\}$.
We split the proof in two steps.

\textit{Step 1.} $n=3$. Then $\orbit (1,2)=\obl 3~1~2\obr $, $\orbit
(1,3)=\obl 4~1~3\obr $, and $\orbit (1,4)=\obl 2~1~4\obr $.
Since all permutations $\varphi _i$ with $i\in X$ have the same cycle
structure, we conclude that
\[
\varphi_{2}=(3\,1\,4),\quad
\varphi_{3}=(4\,1\,2),\quad
\varphi_{4}=(2\,1\,3).
\]
Since $X$ is indecomposable,
Lemma \ref{lem:cs_indecomposable} with $Y_1=Y_2=\{1,2,3,4\}$
implies that $d=4$. 
Then $X$ is the rack $\mathcal{T}$,
the rack associated to
the vertices of the tetrahedron. 
The isomorphism $X\to\mathcal{T}$ is given by
$i\mapsto\pi_i$.

\textit{Step 2.} $n\ge 4$. Since $1\trid 2\not=2$, it follows that
$2\trid 1\not=1$, and hence there exist $i,j\in X$ with $\varphi
_2=(1\,i\,j)$. Since $n\ge 4$, we conclude that $\orbit (1,4)=
\obl \cdots ~i~2~1~4\obr $ and $i\notin \{1,2,4\}$, $j\notin \{1,2,i\}$.
Up to renaming the variables we may assume that $i\in \{3,5\}$.

\textit{Step 2.1.}
Assume that $\varphi_2=(1\,3\,j)$. 
By conjugation with $\varphi_1$ and by Equation~\eqref{eq:conjugation}
we obtain that
$\varphi_3=(1\,4\,(1\trid j))$.
In the same way, since $2\trid 1=3$, we conclude that
$\varphi_3=\varphi_2\varphi_1\varphi_2^{-1}
=(2\,j\,(2\trid 4))$. Comparing the two formulas for $\varphi _3$ implies that
$\varphi _3=(1\,4\,2)=(2\,1\,4)$, a contradiction to $j\not=1$.

\textit{Step 2.2.}
Assume that $\varphi_2=(1\,5\,j)$, where $j\notin \{1,2,5\}$. 
By conjugation with $\varphi_1$ 
we obtain that $\varphi_3=(1\,5\,(1\trid j))$.
Since $\varphi _3^2\varphi _2(1)=1$,
Lemma~\ref{lem:conjugation}
gives that $\varphi _3^2\varphi _2\varphi _1=\varphi _1\varphi _3^2\varphi
_2$. Evaluating this equation at $5$ yields that
\begin{align} \label{eq:133j=33j}
  \varphi _3^2(j)=\varphi _1\varphi _3^2(j).
\end{align}
Thus $\varphi _3^2(j)\notin \{2,3,4\}$.
Therefore $1\trid j=j$, since otherwise $\varphi _3(j)=j$ which
would be a contradiction to Equation~\eqref{eq:133j=33j}.
Conjugation of $\varphi _2$ by powers of $\varphi _1$ yields that
$\varphi_2=\varphi_3=\varphi_4=(1\,5\,j)$ and $j\notin \{1,2,3,4,5\}$.
Since $X$ is indecomposable, this is a contradiction
to Lemma~\ref{lem:cs_indecomposable} with
$Y_1=\{2,3,4\}$, $Y_2=\{1,5,j\}$.

\subsection{The case $k_{3}+k_{4}+\cdots=4$\label{sub:case_m=00003D5}}

By Lemma \ref{lem:general}\eqref{lem:fixedpoints},\eqref{lem:counting_formula}
we know that $\varphi_{1}$ has $d-4$
fixed points. Therefore, we have to consider two cases: $\varphi_{1}=(2\,3)(4\,5)$
or $\varphi_{1}=(2\,3\,4\,5)$.

\textit{Step 1.} $\varphi_{1}=(2\,3)(4\,5)$.

By Remark \ref{rem:orbits} we may assume that
$\#\orbit(1,2)=\#\orbit(1,3)=p$ and
$\#\orbit(1,4)=\#\orbit(1,5)=q$, where $3\leq p\leq q$. If $p=q$ then $k_{p}=4$ and then
$p\in\{3,4\}$ by Remark \ref{rem:kn_bounds}. 
If $p<q$ then $k_p=k_q=2$.
Hence $p\leq4$ by Relation~\eqref{eq:QRcondition}.
A closer look at Relation \eqref{eq:QRcondition} gives that one of the
following holds.
\begin{itemize}
\item $k_{3}=4$ and $k_{n}=0$ for $n>3$.
\item $k_{4}=4$ and $k_{n}=0$ for $n\ge 3$, $n\not=4$.
\item $k_{3}=k_{q}=2$, where $q\in \{4,5,6\}$, and $k_{n}=0$ for $n\ge 4$,
  $n\not=q$.
\end{itemize}
We divide the classification of these cases into two steps. In the first
step we assume that $k_3\ge 2$ and in the second step we consider the
remaining case when $k_4=4$.

\textit{Step 1.1.} $k_3\geq 2$.
Then $\orbit(1,2)=\obl 3~1~2\obr $
and $\orbit(1,3)=\obl 2~1~3\obr $, and hence up to renaming
we may assume that $\varphi _2\in \{(1\,3)(4\,5),(1\,3)(4\,6),(1\,3)(6\,7)\}$.
However, if
$2\trid6=7$ then
\begin{align*}
  7=1\trid7=(2\trid 3)\trid 7\overseteqnum{eq:conjugation}{=}
  2\trid (3\trid 6)=2\trid ( (1\trid 2)\trid 6)
  \overseteqnum{eq:conjugation}{=}2\trid (1\trid (2\trid 6))=6,
\end{align*}
a contradiction.

Next we show that $\varphi_2\not=(1\,3)(4\,5)$. Indeed, assume that $2\trid
4=5$ and let $i=4\trid 1$. Then $4\trid i=1$ since $X$ is involutive.
Moreover,
$\varphi _3=\varphi _{2\trid 1}=(1\,2)(4\,5)$
by Equation~\eqref{eq:conjugation}.
First, $i\not=1$ by Remark~\ref{rem:crossed_set}
since $1\trid 4\not=4$.
Second, Lemma \ref{lem:involutions_abc} with $(a,b,c)=(1,4,1\trid i)$
implies that $i\notin\{2,3\}$. Third, $i\not=4$ since $\varphi _4$ is
injective and $4\trid 4=4$.
Finally, Lemma \ref{lem:racks_abcd} with	$(a,b,c,d)=(1,2,4,i)$ implies that
$i\notin\{5,6\}$, a contradiction.

By the above we conclude that $\varphi_2=(1\,3)(4\,6)$, and hence
Equation~\eqref{eq:conjugation} gives that
$\varphi_3=\varphi _2\varphi _1\varphi _2^{-1}=(1\,2)(5\,6)$. 

\begin{lem}
\label{lem:auxiliar2_m=4}
The orbit $\orbit(1,4)$ has size 3.
\end{lem}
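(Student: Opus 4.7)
My approach is by contradiction: suppose $n := \#\orbit(1,4) \geq 4$, so $n \in \{4,5,6\}$ by Remark~\ref{rem:kn_bounds}. The key observation is that $\#\orbit(1,4) = 3$ is equivalent to $\varphi_4(1) = 5$. Indeed, $c(1,4) = (5,1)$ and $c^2(1,4) = (\varphi_5(1),5)$, so $c^3(1,4) = (1,4)$ if and only if $\varphi_5(1) = 4$ and $\varphi_4(5) = 1$; since $\varphi_5 = \varphi_1\varphi_4\varphi_1$, both conditions amount to $\varphi_4(1) = 5$. It therefore suffices to prove $\varphi_4(1) = 5$.

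I would first constrain the possible values of $y := \varphi_4(1)$. The quandle axiom gives $\varphi_4(4) = 4$, while from $\varphi_3(4) = 4$ and Remark~\ref{rem:crossed_set}\eqref{cs} we obtain $\varphi_4(3) = 3$. On the other hand $\varphi_1(4) = 5 \ne 4$ and $\varphi_2(4) = 6 \ne 4$, so by the crossed set property $\varphi_4$ fixes neither $1$ nor $2$. Since all $\varphi_i$ have the same cycle structure (Remark~\ref{rem:inner}) and this is two disjoint transpositions, the support of $\varphi_4$ equals $\{1,2\}\cup\{u,v\}$ with $u,v \in X \setminus \{1,2,3,4\}$, whence $y \in \{2\} \cup (X \setminus \{1,2,3,4\})$.

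The remaining work is a case analysis on $y$. If $n = 4$, Corollary~\ref{cor:orbit4} applied to $(1,4)$ forces $\varphi_4(5) = 5$ and $\varphi_1(y) = y$, which excludes $y = 2$ and restricts $y$ to fixed points of $\varphi_1$. In each of the surviving cases, I would compute $\varphi_5 = \varphi_1\varphi_4\varphi_1$ and $\varphi_6 = \varphi_3\varphi_5\varphi_3$ from the hypothesised form of $\varphi_4$, iterate $c$ on $(1,4)$ to read off the orbit, and deduce a contradiction. Here I would use the commutation pattern $1\text{--}6$, $2\text{--}5$, $3\text{--}4$ extracted from $\varphi_1,\varphi_2,\varphi_3$, together with Lemma~\ref{lem:racks_commute} or Lemma~\ref{lem:racks_abcd}, to force extra equalities among $\varphi_4,\varphi_5,\varphi_6$.

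The main obstacle I anticipate is the subcase $y \in X \setminus \{1,\ldots,6\}$, since a new element of $X$ then enters the orbit. In this case I plan to carry the new element through the iterates of $c$ and combine the closure condition $c^n(1,4) = (1,4)$ with the two-transposition cycle structure of each $\varphi_i$ to reach a contradiction, either with Relation~\eqref{eq:QRcondition} (because too many new orbits are produced) or with the indecomposability of $X$, checked via Lemma~\ref{lem:cs_indecomposable}.
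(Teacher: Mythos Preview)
Your reduction to a case analysis on $y=4\trid 1$ is exactly what the paper does, and your preliminary constraints on $y$ are correct. However, the paper does not stratify by $n=\#\orbit(1,4)$ at all: it shows directly that each value $y\in\{2,6,7\}$ (the last standing for any $y\notin\{1,\dots,6\}$ after renaming) leads to a contradiction independent of $n$. Your use of Corollary~\ref{cor:orbit4} to rule out $y=2$ only when $n=4$ leaves that case open for $n\in\{5,6\}$ and forces a second pass; the paper instead disposes of $y=2$ in one stroke by observing that conjugation by $\varphi_1$ and $\varphi_2$ gives $5\trid 1=3$ and $6\trid 3=2$, so $\{1,2,3\}$ is stable under every $\varphi_i$ (for $i>6$ this follows from the crossed-set property since $\varphi_1,\varphi_2,\varphi_3$ all fix $i$), contradicting indecomposability via Lemma~\ref{lem:rack_indecomposable}.

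You also misjudge where the difficulty lies. The case you single out as the main obstacle, $y\notin\{1,\dots,6\}$, is in fact the easiest: Lemma~\ref{lem:racks_commute} with $(a,x,y,z)=(3,4,1,7)$ applies immediately because $\varphi_3=(1\,2)(5\,6)$ fixes both $4$ and $7$, yet $3\trid 1=2\ne 1$. No tracking of $c$-iterates or appeal to \eqref{eq:QRcondition} is needed. The case requiring the most computation is $y=6$: there one finds $4\trid 2=\varphi_3(6)=5$, hence $\varphi_4=(1\,6)(2\,5)$, and then $\varphi_5=(1\,6)(3\,4)$, $\varphi_6=(3\,4)(2\,5)$ by conjugation; the contradiction is that $\varphi_4\varphi_5\varphi_4^{-1}=(1\,6)(3\,4)\ne\varphi_2$, although $4\trid 5=2$ would demand equality by \eqref{eq:conjugation}. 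So your plan is sound, but dropping the split by $n$ and reassigning effort among the three values of $y$ would make it match the paper's argument and shorten it considerably.
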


\begin{proof}
Assume that $\#\orbit(1,4)>3$. 
Then $4\trid1\ne5$. Indeed, if $4\trid1=5$ then $4\trid 5=1$ and hence
$\orbit(1,4)=\obl 5~1~4\obr $, a contradiction.

Remark \ref{rem:crossed_set} and (R1) yield that $4\trid 1\notin \{1,4\}$.
Moreover, $4\trid1\not=3$. Indeed, otherwise $4\trid 3=1$ since $X$ is
involutive. However, $3\trid4=4$ and Remark~\ref{rem:crossed_set}
imply that $4\trid 3=3$, a contradiction.

By the above we conclude that $4\trid1\in\{2,6,7\}$.
If $4\trid1=2$ then 
\[ 6\trid3=\varphi_2(4\trid1)=2,\quad
5\trid1=\varphi_1(4\trid1)=3. \]
Then Lemma \ref{lem:rack_indecomposable} with $Y=\{1,2,3\}$ yields
a contradiction since
$X$ is indecomposable.
Lemma \ref{lem:racks_commute} with $(a,x,y,z)=(3,4,1,7)$ implies
that $4\trid1\ne 7$. 
Finally, if $4\trid1=6$ then
$4\trid2=\varphi_3(4\trid1)=5$ and hence
\[ \varphi _4=(1\,6)(2\,5),\quad
   \varphi _5=(1\,6)(3\,4),\quad
   \varphi _6=(3\,4)(2\,5)
\]
by Equation~\eqref{eq:conjugation} using conjugation with $\varphi _1$
and $\varphi _2$, respectively. Then
\[ \varphi _4\varphi _5\varphi _4^{-1}= (1\,6)(3\,4)\not=\varphi _2,\]
a contradiction to
Equation~\eqref{eq:conjugation}. Thus $\# \orbit (1,4)=3$.
\end{proof}

Lemma \ref{lem:auxiliar2_m=4} gives that $k_3=4$
and $k_n=0$ for all $n>3$. Thus
$\#\orbit(1,i)=3$ for all $i\in \{2,3,4,5\}$ and hence
\[ \orbit (1,2)=\obl 3~1~2\obr,\quad
   \orbit (1,3)=\obl 2~1~3\obr,\quad
   \orbit (1,4)=\obl 5~1~4\obr,\quad
   \orbit (1,5)=\obl 4~1~5\obr . \]
Then $4\trid2=(3\trid4)\trid(3\trid1)=3\trid(4\trid1)=3\trid5=6$.
We have
$\varphi_{4}=(1\,5)(2\,6)$, $\varphi_{5}=(1\,4)(3\,6)$ and
$\varphi_{6}=(2\,4)(3\,5)$. Since $X$ is indecomposable, Lemma
\ref{lem:cs_indecomposable} with $Y_1=Y_2=\{1,2,\dots,6\}$ implies that $d=6$.
This rack is isomorphic to $\mathcal{A}$. The isomorphism $X\to\mathcal{A}$ 
is given by $i\mapsto\pi_{p(i)}$, where $p=(1\,2)(4\,5)$.

\textit{Step 1.2.} $k_4=4$ and $k_n=0$ for all $n\ge 3$ with $n\not=4$.
Then the orbits $\orbit(1,2)$ and $\orbit(1,3)$ have size $4$.
Since $X$ is involutive, we conclude from
Corollary \ref{cor:orbit4} that $1$ and $2\trid 1$ commute, and hence we may
assume that $2\trid1=6$. Conjugation by $\varphi _1$ yields that $3\trid 1=6$,
and hence
\[ \orbit (1,2)=\obl 6~3~1~2\obr,\quad
   \orbit (1,3)=\obl 6~2~1~3\obr .
\]
Then $2\trid \orbit (1,2)=\obl 1~2\trid 3~6~2\obr =\obl 6~2~1~2\trid 3\obr $,
that is, $2\trid 3=3$.
Up to renaming we obtain that
$\varphi _2\in \{(1\,6)(4\,5),(1\,6)(4\,7),(1\,6)(7\,8)\}$.
The same arguments applied to $\orbit(1,4)$ and $\orbit (1,5)$, which also have
size $4$,
give that $4\trid 1=5\trid 1\in X\setminus \{1,2,3,4,5\}$ and
$4\trid 5=5$.
If $4\trid 1=6$ then Lemma \ref{lem:rack_indecomposable}
with $Y=\{1,6\}$ gives a contradiction
since $X$ is indecomposable.
Therefore
\begin{align} \label{eq:4trid1}
  4\trid 1=5\trid 1\in X\setminus \{1,2,3,4,5,6\}.
\end{align}

\begin{caseenv}
\item Assume that $\varphi_2=(1\,6)(4\,5)$.
  Then $\varphi_3=\varphi _{1\trid 2}=\varphi _2$
  and $\varphi_6=\varphi_{2\trid 1}=\varphi_1$ by
  Equation~\eqref{eq:conjugation}.
  By the above we may assume that $4\trid1=7$. Since
  neither $2$ nor $3$ commutes with $4$,
  we conclude that
  $\varphi_4=(1\,7)(2\,3)$. 
  Then $\varphi_5=\varphi_{1\trid 4}=\varphi_4$
  and $\varphi_7=\varphi_{4\trid 1}=\varphi_1$.
  Then $X\trid \{4,5\}\subseteq \{4,5\}$ which is a contradiction
  to Lemma \ref{lem:rack_indecomposable}.

\item Assume that $\varphi_2=(1\,6)(7\,8)$. Then
  $\varphi_3=\varphi_{1\trid 2}=\varphi_2$
  and $\varphi _6=\varphi _{2\trid 1}=\varphi _1$.
  Since $2\trid7\ne 7$ and $3\trid 7\ne 7$, we conclude that
  $\varphi_7\in \{(2\,3)(i\,j),(2\,i)(3\,j)\}$ for some $i,j\in X\setminus
  \{2,3,7\}$ with $i\not=j$.
  If $7\trid2=3$ then $\varphi _8=\varphi _{2\trid 7}=(2\,3)(2\trid i\,2\trid
  j)$, and hence $X\trid \{2,3\}\subseteq \{2,3\}$ in contradiction to Lemma
	\ref{lem:rack_indecomposable}.
	Therefore $\varphi_7=(2\,i)(3\,j)$. 
	Applying $\varphi_1$ we conclude that $i,j\in\{4,5\}$. 
	Let $k\in\{2,3\}$ such that $7\trid k=4$. Then 
	$4\trid 6=(7\trid k)\trid(7\trid6)=
	7\trid(k\trid6)=7\trid1=1$, which contradicts to \eqref{eq:4trid1}.
\item Assume that $\varphi_2=(1\,6)(4\,7)$.
  It follows that $\varphi_3=\varphi _{1\trid 2}=(1\,6)(5\,7)$ and
  $\varphi_6=\varphi _{2\trid 1}=(2\,3)(5\,7)$. Moreover, $\varphi _6=\varphi
  _{3\trid 1}=(2\,3)(4\,7)$, a contradiction.
\end{caseenv}

We conclude that there are no racks satisfying the assumption in Step~1.2.
The only solution of Step~1 is the rack $\mathcal{A}$.

\vspace{\baselineskip}

\textit{Step 2.} $\varphi_ 1=(2\,3\,4\,5)$.
Let $m=\#\orbit(1,2)$. Remark \ref{rem:orbits} implies that
$k_m=4$ and that $k_n=0$ for all $n\ge 3$ with $n\not=m$.
By Remark \ref{rem:kn_bounds} it follows that $m\in \{3,4\}$.

\textit{Step 2.1.} $k_3=4$ and $k_n=0$ for all $n>3$.
Then
\[ \orbit (1,2)=\obl 3~1~2\obr,\quad
\orbit (1,3)=\obl 4~1~3\obr,\quad
\orbit (1,4)=\obl 5~1~4\obr,\quad
\orbit (1,5)=\obl 2~1~5\obr .
\]
Therefore
$\varphi_2=(3\,1\,5\,i)$ for some $i\in X$, and without loss of generality we
may assume that $i\in \{4,6\}$.

\begin{caseenv}
\item If $2\trid5=4$ then $\varphi _3=\varphi _{1\trid 2}=
  (4\,1\,2\,5)$ and hence $\varphi _2=\varphi _{3\trid 1}=(5\,3\,1\,4)$,
  a contradiction.
\item If $2\trid5=6$ then
  $\varphi_2=(1\,5\,6\,3)$,
  and by applying $\varphi_1$ we obtain that
  $\varphi_3=(1\,2\,6\,4)$,
  $\varphi_{4}=(1\,3\,6\,5)$, $\varphi_{5}=(1\,4\,6\,2)$.
  Moreover, $\varphi _6=\varphi _{2\trid 5}=(2\,5\,4\,3)$.
  Since $X$ is indecomposasble, Lemma \ref{lem:cs_indecomposable} with 
  $Y_1=Y_2=\{1,2,\dots ,6\}$ implies that $d=6$. 
  This rack is 
  $\mathcal{B}$. 
\end{caseenv}

\textit{Step 2.2.} $k_{4}=4$ and $k_{n}=0$ for all $n\ge 3$ with $n\not=4$.
Since $1\trid 2=3$, it follows that
$\orbit (1,2)=\obl i~3~1~2\obr $ for some $i\in X\setminus \{1,2,3\}$.
We may assume that $i\in\{4,5,6\}$.

\begin{caseenv}
\item $3\trid1=4$. Then
  $\orbit (1,2)=\obl 1~2~4~3\obr $, and
  by conjugation with $\varphi _1$ we conclude that
\[
  \orbit (1,3)=\obl 1~3~5~4\obr , \quad
  \orbit (1,4)=\obl 1~4~2~5\obr , \quad
  \orbit (1,5)=\obl 1~5~3~2\obr .
\]
Therefore $\varphi_2=(5\,4\,1\,3)$. Conjugation with $\varphi _1$
yields that
$\varphi_3=(2\,5\,1\,4)$, $\varphi_{4}=(3\,2\,1\,5)$ and
$\varphi_{5}=(4\,3\,1\,2)$.
Since $X$ is indecomposable, Lemma \ref{lem:cs_indecomposable}
with $Y_1=Y_2=\{1,2,3,4,5\}$ implies that $d=5$. 
This rack is the affine rack associated with $(\mathbb{F}_{5},3)$.

\item $3\trid1=5$. As in the previous case we obtain that
  \begin{align*}
    \orbit (1,2)=&\obl 1~2~5~3\obr , &
    \orbit (1,3)=&\obl 1~3~2~4\obr ,\\
    \orbit (1,4)=&\obl 1~4~3~5\obr , &
    \orbit (1,5)=&\obl 1~5~4~2\obr 
  \end{align*}
  and that
  \begin{align*}
    \varphi_2=(5\,1\,4\,3),\quad
    \varphi_3=(2\,1\,5\,4),\quad
    \varphi_4=(3\,1\,2\,5),\quad
    \varphi_5=(4\,1\,3\,2).
  \end{align*}
  Since $X$ is indecomposable,
  Lemma \ref{lem:cs_indecomposable} with $Y_1=Y_2=\{1,2,\dots ,5\}$
  implies that
  $d=5$.
  This rack is the affine rack associated to $(\mathbb{F}_{5},2)$.

\item $3\trid1=6$. Then $\orbit (1,2)=\obl 1~2~6~3\obr $.
  Conjugation with $\varphi _1$ gives that
  $\orbit (1,3)=\obl 1~3~6~4\obr $. Hence $3\trid 1=6$ and $3\trid 6=1$,
  a contradiction since $\varphi _3$ is a $4$-cycle.
\end{caseenv}

\subsection{The case $k_{3}+k_{4}+\cdots=5$\label{sub:case_m=00003D6}}

By Lemma \ref{lem:general}\eqref{lem:fixedpoints},\eqref{lem:counting_formula}
we may assume that $\varphi_{1}\in \{(2\,3\,4\,5\,6),
(2\,3)(4\,5\,6)\}$.

\textit{Step 1.} $\varphi_{1}=(2\,3\,4\,5\,6)$.
Remark \ref{rem:orbits} implies that $k_m=5$ for some $m\ge 3$ and that
$k_n=0$ for all $n\ge 3$ with $n\not=m$.
By Remark~\ref{rem:kn_bounds} we obtain that $m=3$.
Thus $\#\orbit(i,j)=3$ for all $i,j\in X$ with $i\trid j\not=j$.
As in Subsection~\ref{sub:case_m=00003D5}, Step 2.1.\ we conclude that
$\varphi_2=(3\,1\,6\,\cdots)$, $\varphi_3=(4\,1\,2\,\cdots)$,
$\varphi_4=(5\,1\,3\,\cdots)$, $\varphi_5=(6\,1\,4\,\cdots)$
and $\varphi_6=(2\,1\,5\,\cdots)$.
Therefore $2\trid6\in\{4,5,7\}$.  Since
\begin{gather}
\label{eq:case1_m=5}
6\trid(2\trid6)=(6\trid2)\trid6=1\trid6=2
\end{gather}
and $\varphi _6$ is a $5$-cycle, it follows that
$2\trid6\ne5$.  If $2\trid6=7$, applying
$\varphi_1$ we have that $3\trid2=4\trid3=5\trid4=6\trid5=7$, which contradicts
\eqref{eq:case1_m=5} since $\varphi _6$ is a $5$-cycle.
Therefore $2\trid6=4$ and by applying $\varphi_1$ we obtain that
$3\trid2=5$, $4\trid3=6$, $5\trid4=2$ and $6\trid5=3$. 
Moreover, $2\trid 5\ne 5$ by Remark \ref{rem:crossed_set} and since
$5\trid 2\ne2$.
Hence
$\varphi_{2}=(3\,1\,6\,4\,5)$, $\varphi_{3}=(1\,2\,5\,6\,4)$,
$\varphi_{4}=(1\,3\,6\,2\,5)$, $\varphi_{5}=(1\,4\,2\,3\,6)$ and
$\varphi_{6}=(1\,5\,3\,4\,2)$. However there is no such rack,
since for example $3\trid(6\trid4)\ne(3\trid6)\trid(3\trid4)$.

\textit{Step 2.} $\varphi_{1}=(2\,3)(4\,5\,6)$.
Let $p=\#\orbit(1,2)$ and let $q=\#\orbit(1,4)$. Since $1\trid 2\not=2$ and
$1\trid 4\not=4$, it follows that $p,q\geq3$. 
Remark \ref{rem:orbits} implies that $k_q\geq3$. 
If $p=q$ then $k_p=5$, and therefore $p=3$ by Remark \ref{rem:kn_bounds}.
If $p\ne q$ then
$k_p=2$ and $k_q=3$, since $k_3+k_4+\cdots=5$. In this case
$p=4$, $q=3$ by \eqref{eq:QRcondition}. Indeed, if $p<q$ then
$\frac{p-2}{2p}k_p+\frac{q-2}{2q}k_q\ge 
\frac{3-2}{6}2+\frac{4-2}{8}3=\frac{13}{12}>1$, and if $q<p$ then
$\frac{p-2}{2p}k_p+\frac{q-2}{2q}k_q\ge 
\frac{4-2}{8}2+\frac{3-2}{6}3=1$, and equality holds if and only if $q=3$,
$p=4$.
This means that 
\begin{gather}
  \orbit (1,4)=\obl 5~1~4 \obr,\quad
  \orbit (1,5)=\obl 6~1~5 \obr,\quad
  \orbit (1,6)=\obl 4~1~6 \obr.
\end{gather}
Hence
$\varphi_4=(5\,1\,6)(\cdot\cdot)$, $\varphi_5=(6\,1\,4)(\cdot\cdot)$ and
$\varphi_6=(4\,1\,5)(\cdot\cdot)$.
Since $\varphi_1^3=(2\,3)$, it follows that $\varphi _1^3(6)=6$ and hence
$\varphi _1^3\varphi _6 \varphi _1^{-3}=\varphi _6$. Therefore
$\varphi_6=(1\,5\,4)(2\,3)$ or $\varphi_6=(1\,5\,4)(7\,8)$.

Assume first that $\varphi_6=(1\,5\,4)(7\,8)$.
Then $6\trid2=2$, and hence $2\trid6=6$
by Remark \ref{rem:crossed_set}. 
By applying $\varphi_1^2=(4\,6\,5)$ twice
we obtain that
$2\trid5=5$ and $2\trid4=4$.
Then
\[
4=2\trid4=2\trid(5\trid1)=(2\trid5)\trid(2\trid1)
=5\trid(2\trid1),
\]
and hence
$2\trid1=1$, which contradicts $1\trid2=3$ and Remark~\ref{rem:crossed_set}.

Assume now that $\varphi_6=(1\,5\,4)(2\,3)$. Then $2\trid 6\not=6$,
and by conjugation with $\varphi _1^2$ we obtain that $2\trid 5\not=5$,
$2\trid 4\not=4$.
Thus there exists $i\in X$ such that $\varphi _2$ permutes $\{1,i,4,5,6\}$.
Since $\varphi _1^2(2)=2$, Lemma~\ref{lem:conjugation} gives that
$\varphi _1^2\varphi_2\varphi_1^{-2}=\varphi _2$. Therefore
$\varphi _2\in \{(1\,i)(4\,5\,6),(1\,i)(4\,6\,5)\}$, and we obtain a
contradiction to $\varphi _6^2\varphi _2\varphi
_6^{-2}=\varphi _2$, where the latter holds
by Lemma~\ref{lem:conjugation} and since
$\varphi _6^2(2)=2$.

\subsection{The case $k_{3}+k_{4}+\cdots=6$\label{sub:case_m=00003D7}}

By Lemma \ref{lem:general}\eqref{lem:fixedpoints},\ref{lem:counting_formula}
we obtain that $\varphi_{1}\in \{(2\,3\,4\,5\,6\,7),
(2\,3)(4\,5\,6\,7),(2\,3\,4)(5\,6\,7),(2\,3)(4\,5)(6\,7)\}$.

\begin{lem}
\label{lem:solutions_m=6}
We have $k_{3}=6$ and $k_{n}=0$ for all $n>3$. 
\end{lem}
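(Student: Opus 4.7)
The plan is to split on the cycle type of $\varphi_1$, which by the preceding discussion is one of $(6)$, $(2,4)$, $(3,3)$, or $(2,2,2)$, and in each case to use Remark~\ref{rem:orbits} to translate the cycle decomposition into a partition of the integer $6=k_3+k_4+\cdots$ according to the orbit sizes $\#\orbit(1,j)$. The key observation is that $\varphi_1$ fixes $1$ (since $1\trid 1=1$ in the quandle $X$), so Remark~\ref{rem:orbits} ensures that $\#\orbit(1,j)$ is constant on each $\varphi_1$-cycle. Since the six elements moved by $\varphi_1$ are precisely those $j$ with $\#\orbit(1,j)\geq 3$, the cycle lengths of $\varphi_1$ prescribe how the total $6$ can be partitioned among the $k_n$.

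In the $(6)$-case, all six moved elements share one common orbit size $n\geq 3$, so $k_n=6$, and \eqref{eq:QRcondition} gives $\frac{n-2}{2n}\cdot 6\leq 1$, forcing $n=3$. For $\varphi_1=(2\,3)(4\,5\,6\,7)$, write $p$ and $q$ for the common orbit sizes on the two cycles. Either $p=q$ and we reduce to the previous case, or $p\neq q$ with $k_p=2$ and $k_q=4$, in which case direct inspection shows that $\frac{p-2}{p}+2\frac{q-2}{q}\leq 1$ has no solution with $p,q\geq 3$. The $(3,3)$-case is analogous: $p=q$ again forces $k_3=6$, while $p\neq q$ yields $k_p=k_q=3$, and the minimum value of $\frac{3(p-2)}{2p}+\frac{3(q-2)}{2q}$ over distinct $p,q\geq 3$ already exceeds $1$.

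The bulk of the bookkeeping lies in $\varphi_1=(2\,3)(4\,5)(6\,7)$, where three orbit sizes $p,q,r\geq 3$ must be considered. If all three coincide, then $k_p=6$ and again $p=3$. If exactly two coincide, say $p=q\neq r$, then $k_p=4$ and $k_r=2$, and the inequality $2\frac{p-2}{p}+\frac{r-2}{r}\leq 1$ is inconsistent (for $p=3$ it forces $r\leq 3$, contradicting $r\neq p$, and for $p\geq 4$ the first summand alone is at least $1$). If $p,q,r$ are all distinct, then $k_p=k_q=k_r=2$ and the sum $\frac{p-2}{p}+\frac{q-2}{q}+\frac{r-2}{r}$ is at least $\frac{1}{3}+\frac{1}{2}+\frac{3}{5}=\frac{43}{30}>1$, a contradiction. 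In every case only $k_3=6$ survives. The argument is entirely finite arithmetic once the cycle-type reduction is in place, and I do not foresee any conceptual obstacle beyond the case split itself.
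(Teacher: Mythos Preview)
Your proof is correct, but it takes a more laborious route than the paper. The paper's argument avoids the cycle-type split entirely: it simply observes that the left side of \eqref{eq:QRcondition} is a sum of six weights, one for each $j$ with $\#\orbit(1,j)\geq 3$, where the weight of $j$ is $\frac{s-2}{2s}$ with $s=\#\orbit(1,j)$. Since this weight function is strictly increasing in $s$ and equals $\tfrac{1}{6}$ at $s=3$, the sum of six such weights can be at most $1$ only if every weight is exactly $\tfrac{1}{6}$, i.e.\ every orbit has size $3$. Your approach instead passes through the four possible cycle types of $\varphi_1$ and, for each, uses Remark~\ref{rem:orbits} to group the six $j$'s into blocks with common orbit size before checking \eqref{eq:QRcondition} arithmetically. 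Both arguments are valid; the paper's is shorter and does not require knowing the cycle structure of $\varphi_1$ in advance, while yours makes the constraint imposed by each cycle type more explicit and would generalise more readily if one wanted to track which cycle types are compatible with a relaxed version of the inequality.
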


\begin{proof}
  Let us say that an orbit $\orbit (1,i)$ with $i\in X\setminus \{1\}$
  has weight
  $\frac{s-2}{2s}$, where $s$ is the size of the orbit. Then
  the left hand side of \eqref{eq:QRcondition} is just the weight sum
  of the orbits $\orbit (1,i)$ of size at least $3$, where $i\in X\setminus
  \{1\}$. By assumption there are $6$ such orbits, and the smallest weight is
  $1/6$ which appears if the orbit size is $3$.
  Hence \eqref{eq:QRcondition} implies that all weights are $1/6$, that
  is, all orbits have size $3$. This proves the claim.
\end{proof}

\textit{Step 1.} $\varphi_{1}=(2\,3\,4\,5\,6\,7)$.
By Lemma \ref{lem:solutions_m=6} and 
Corollary~\ref{cor:orbit3}
we obtain that $\varphi_2=(3\,1\,7\cdots)$, $\varphi_3=(4\,1\,2\cdots)$, 
$\varphi_4=(5\,1\,3\cdots)$, $\varphi_5=(6\,1\,4\cdots)$,
$\varphi_6=(7\,1\,5\cdots)$
and $\varphi_7=(2\,1\,6\cdots)$.
Since
\[
7=2\trid1=2\trid(3\trid4)=(2\trid3)\trid(2\trid4)=1\trid(2\trid4),
\]
it follows that $2\trid4=6$.
Moreover, $2\trid5\ne5$. Indeed, otherwise
\[
7=2\trid1=2\trid(4\trid5)=(2\trid4)\trid(2\trid5)=6\trid5\ne7,
\]
a contradiction.
Therefore $\varphi _2\in \{(3\,1\,7\,4\,6\,5),(3\,1\,7\,5\,4\,6)\}$.
By conjugation with $\varphi _1$ one obtains all permutations $\varphi _i$
with $i\in \{3,4,5,6,7\}$. Since $X$ is indecomposable,
Lemma~\ref{lem:cs_indecomposable} with $Y_1=Y_2=\{1,2,\dots ,7\}$
implies that $d=7$.

\begin{caseenv}
\item Assume that $\varphi_{2}=(3\,1\,7\,4\,6\,5)$. Then
$\varphi_{3}=(4\,1\,2\,5\,7\,6)$,
$\varphi_{4}=(5\,1\,3\,6\,2\,7)$,
$\varphi_{5}=(6\,1\,4\,7\,3\,2)$,
$\varphi_{6}=(7\,1\,5\,2\,4\,3)$ and
$\varphi_{7}=(2\,1\,6\,3\,5\,4)$.
This rack is the affine rack $\mathrm{Aff}(7,5)$.

\item Assume that $\varphi_{2}=(3\,1\,7\,5\,4\,6)$. Then
$\varphi_{3}=(4\,1\,2\,6\,5\,7)$,
$\varphi_{4}=(5\,1\,3\,7\,6\,2)$,
$\varphi_{5}=(6\,1\,4\,2\,7\,3)$,
$\varphi_{6}=(7\,1\,5\,3\,2\,4)$ and
$\varphi_{7}=(2\,1\,6\,4\,3\,5)$.
This rack is the affine rack $\mathrm{Aff}(7,3)$.
\end{caseenv}

\textit{Step 2.} $\varphi_{1}=(2\,3\,4)(5\,6\,7)$.
By Lemma \ref{lem:solutions_m=6} and 
Corollary~\ref{cor:orbit3}
we obtain that
$\varphi_{2}=(3\,1\,4)(\cdots)$, $\varphi_{3}=(4\,1\,2)(\cdots)$, 
$\varphi_{4}=(2\,1\,3)(\cdots)$,
$\varphi_{5}=(6\,1\,7)(\cdots)$, $\varphi_{6}=(7\,1\,5)(\cdots)$ and
$\varphi_7=(5\,1\,6)(\cdots)$.
Now we prove two lemmas and prove that they contradict to
$\varphi_{1}=(2\,3\,4)(5\,6\,7)$.
Let $x,y,z\in X$ such that $\varphi_{2}=(3\,1\,4)(x\, y\, z)$.

\begin{lem}
\label{lem:auxiliar1_m=6}
If $1\trid y=y$ then $3\trid x=x$. Moreover, $1\trid x\ne x$.
\end{lem}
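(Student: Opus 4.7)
The strategy is to exploit two consequences of the hypothesis $1\trid y=y$: first, the commutation $\varphi_y\varphi_1=\varphi_1\varphi_y$, which is immediate from Equation~\eqref{eq:conjugation}, and second, the observation that $\varphi_3=\varphi_1\varphi_2\varphi_1^{-1}$ fixes $y$, so that its cycle decomposition can be read off from that of $\varphi_2$ by conjugation. All non-trivial orbits have size $3$ by Lemma~\ref{lem:solutions_m=6}, so Corollary~\ref{cor:orbit3} applies liberally.

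The plan for $1\trid x\neq x$ is a direct argument. From $\varphi_2=(3\,1\,4)(x\,y\,z)$ we have $2\trid x=y$, and Corollary~\ref{cor:orbit3} gives $y\trid 2=x$, i.e.\ $\varphi_y(2)=x$. Supposing for contradiction that $1\trid x=x$, the commutation yields
\[
\varphi_y(3)=\varphi_y\varphi_1(2)=\varphi_1\varphi_y(2)=\varphi_1(x)=x,
\]
contradicting injectivity of $\varphi_y$ since $\varphi_y(2)=x$ as well.

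For $3\trid x=x$, conjugation of $\varphi_2$ by $\varphi_1$ via Equation~\eqref{eq:conjugation} together with $\varphi_1(y)=y$ gives
\[
\varphi_3=(4\,1\,2)(\varphi_1(x)\,y\,\varphi_1(z))
\]
plus fixed points. So it suffices to check $x\notin\{4,1,2,\varphi_1(x),y,\varphi_1(z)\}$. The inequalities $x\neq 1,3,4$ hold because the two cycles of $\varphi_2$ are disjoint, $x\neq 2$ because $2$ is a fixed point of $\varphi_2$, $x\neq y$ is immediate, and $x\neq\varphi_1(x)$ is exactly the statement $1\trid x\neq x$ just proved.

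The main obstacle, and the key point, is ruling out $x=\varphi_1(z)$. The slick way is: if $x=\varphi_1(z)$ then $1\trid z=x$, and since $x\neq z$ the orbit $\orbit(1,z)$ has size $3$; applying Corollary~\ref{cor:orbit3} to $1\trid z=x$ gives $z\trid x=1$. On the other hand, the cycle $(x\,y\,z)$ of $\varphi_2$ gives $2\trid z=x$, and the same corollary yields $z\trid x=2$. Since $1\neq 2$, this is a contradiction, which eliminates the last possibility and forces $\varphi_3(x)=x$.
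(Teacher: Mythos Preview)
Your argument is correct, but the route is longer than it needs to be. The paper proves the two claims in the opposite order. For $3\trid x=x$, a single application of (R2) does the job:
\[
2\trid(3\trid x)=(2\trid 3)\trid(2\trid x)=1\trid y=y=2\trid x,
\]
so $3\trid x=x$ by injectivity of $\varphi_2$. Then $1\trid x\ne x$ drops out immediately: if $1\trid x=x$ then $\varphi_1^2$ fixes both $3$-preimage and $x$, so applying $\varphi_1^2$ to the equation $3\trid x=x$ gives $2\trid x=x$, i.e.\ $x=y$, a contradiction. This avoids your explicit cycle decomposition of $\varphi_3$ and the six-case check, and in particular sidesteps the separate elimination of $x=\varphi_1(z)$.

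Your approach---commutation of $\varphi_1$ and $\varphi_y$ to get $1\trid x\ne x$, then conjugating $\varphi_2$ to read off the support of $\varphi_3$---is sound and self-contained, and the handling of $x=\varphi_1(z)$ via the double application of Corollary~\ref{cor:orbit3} is a nice trick. But it trades one line of (R2) for a page of casework; the paper's ordering is the more efficient one here.
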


\begin{proof}
From $2\trid(3\trid x)=(2\trid3)\trid(2\trid x)=1\trid y=y$ and from (R1)
we conclude that
$3\trid x=x$.
If $1\trid x=x$ then
$x=\varphi _1^2(3\trid x)=\varphi _1^2(3)\trid \varphi _1^2(x)=2\trid x=y$,
a contradiction.
\end{proof}

\begin{lem}
\label{lem:auxiliar2_m=6}
We have $2\trid5\ne6$ and $2\trid 5\not=7$.
\end{lem}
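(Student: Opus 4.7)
The plan is to argue by contradiction in each case, exploiting the fact that by Lemma \ref{lem:solutions_m=6} every nontrivial orbit has size exactly $3$, so Corollary \ref{cor:orbit3} applies to any relation $i\trid j=k$ with $i\trid j\neq j$. This essentially turns the three-cycle structure of the orbit into a small system of equations that must be consistent with the already determined data on $\varphi_5$.

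Concretely, suppose first that $2\trid 5=6$. Since $6\neq 5$, the orbit $\orbit (2,5)$ is nontrivial and so has size $3$ by Lemma \ref{lem:solutions_m=6}. Corollary \ref{cor:orbit3} then forces $5\trid 6=2$ and $6\trid 2=5$. However, in the normalization fixed at the beginning of Step~2 we already know that $\varphi _5=(6\,1\,7)(\cdots)$, which gives $5\trid 6=1\neq 2$, a contradiction. Suppose next that $2\trid 5=7$. The same argument shows $\orbit (2,5)$ has size $3$, and Corollary \ref{cor:orbit3} yields $5\trid 7=2$. But again $\varphi _5=(6\,1\,7)(\cdots)$ gives $5\trid 7=6\neq 2$, a contradiction. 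Thus $2\trid 5\notin \{6,7\}$.

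There is really no serious obstacle here: the whole point is that the cycle $(6\,1\,7)$ appearing in $\varphi _5$ (which was pinned down from $\varphi _1=(2\,3\,4)(5\,6\,7)$ together with Corollary \ref{cor:orbit3}) already records the values $5\trid 6$ and $5\trid 7$, and these are incompatible with the cyclic three-term identities forced by an orbit of size $3$ starting at $(2,5)$. The only thing to double-check is that the orbit $\orbit(2,5)$ cannot be of size $1$ or $2$ in the two cases considered, which is immediate since $2\trid 5\in \{6,7\}$ is different from $5$.
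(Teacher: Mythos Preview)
Your proof is correct and is essentially identical to the paper's own argument: both assume $2\trid 5\in\{6,7\}$, invoke Lemma~\ref{lem:solutions_m=6} to force $\#\orbit(2,5)=3$, apply Corollary~\ref{cor:orbit3} to obtain $5\trid 6=2$ (resp.\ $5\trid 7=2$), and contradict this against $\varphi_5=(6\,1\,7)(\cdots)$.
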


\begin{proof}
  If $2\trid 5=6$ then $5\trid 6=2$ by Corollary~\ref{cor:orbit3} and
  Lemma~\ref{lem:solutions_m=6}. This is a contradiction to $5\trid 6=1$.
  Similarly, if $2\trid 5=7$ then $5\trid 7=2$, a contradiction to $5\trid
  7=6$.
\end{proof}

Lemma~\ref{lem:auxiliar1_m=6} implies that at least two of $x,y,z$
are contained in $\{5,6,7\}$. By cyclic permutation of $x,y,z$ and of $5,6,7$
we may assume that $x,y\in \{5,6,7\}$ and that $x=5$.
Then Lemma~\ref{lem:auxiliar2_m=6} gives a contradiction to $2\trid x=y$.

We conclude that there are no racks satisfying the properties assumed in
Step~2.

\textit{Step 3.} $\varphi_{1}=(2\,3\,4\,5)(6\,7)$.
Lemma \ref{lem:solutions_m=6} and Corollary~\ref{cor:orbit3}
imply that
$\varphi_{2}=(3\,1\,5\,\cdot)(\cdot\,\cdot)$,
$\varphi_{3}=(4\,1\,2\,\cdot)(\cdot\,\cdot)$,
$\varphi_{4}=(5\,1\,3\,\cdot)(\cdot\,\cdot)$,
$\varphi_{5}=(2\,1\,4\,\cdot)(\cdot\,\cdot)$,
$\varphi_{6}=(1\,7)(\cdot\,\cdot\,\cdot\,\cdot)$
and $\varphi_{7}=(1\,6)(\cdot\,\cdot\,\cdot\,\cdot)$.
Since the role of $6$ and $7$ is exchangeable,
we may assume that $2\trid 5\in\{4,6,8\}$.
However, $2\trid 5\not=4$. Indeed, otherwise
$5\trid 4=2$ by Corollary~\ref{cor:orbit3}, which is a contradiction to 
$\varphi_{5}=(2\,1\,4\,\cdot)(\cdot\cdot)$.
Further, $2\trid 5\not=6$. Indeed, otherwise 
$\varphi_{2}=(3\,1\,5\,6)(\cdot\,\cdot)$ and
$5\trid 6=2$ by Corollary~\ref{cor:orbit3}. By applying $\varphi _1$
to the last equation
we obtain that $2\trid 7=3$ which is a contradiction to $2\trid 6=3$.
It follows that $2\trid 5=8$. By applying $\varphi _1^3$ we conclude that
$5\trid 4=8$.

Since 
$8=1\trid8=(5\trid2)\trid(5\trid4)=5\trid(2\trid4)$
and $5\trid4=8$, we obtain from (R1) that $2\trid4=4$.
Thus $\varphi_{2}\in \{(3\,1\,5\,8)(6\,7),
(3\,1\,5\,8)(6\,9),
(3\,1\,5\,8)(9\,10)\}$.

\textit{Step 3.1.} $\varphi_{2}=(3\,1\,5\,8)(6\,7)$.
Since $2\trid 6=7$, Corollary~\ref{cor:orbit3} implies that
$6\trid 7=2$, a contradiction.

\textit{Step 3.2.} $\varphi_{2}=(3\,1\,5\,8)(6\,9)$.
Then $6\trid 9=2$ by Corollary~\ref{cor:orbit3}. By applying $\varphi _1^2$
we obtain that $6\trid 9=4$, a contradiction.

\textit{Step 3.3.} $\varphi_{2}=(3\,1\,5\,8)(9\,10)$.
Then $9\trid 10=2$ by Corollary~\ref{cor:orbit3}. By applying $\varphi _1$
we obtain that $9\trid 10=3$, a contradiction.

It follows that there is no rack $X$ such that $k_3=6$ and $\varphi
_1=(2\,3\,4\,5)(6\,7)$.

\vspace{\baselineskip}

\textit{Step 4.} $\varphi_{1}=(2\,3)(4\,5)(6\,7)$.
Lemma \ref{lem:solutions_m=6} implies that
\begin{align*}
  \orbit (1,2)=\obl 3~1~2\obr ,\quad
  \orbit (1,3)=\obl 2~1~3\obr ,\quad
  \orbit (1,4)=\obl 5~1~4\obr ,\\
  \orbit (1,5)=\obl 4~1~5\obr ,\quad
  \orbit (1,6)=\obl 7~1~6\obr ,\quad
  \orbit (1,7)=\obl 6~1~7\obr .
\end{align*}
Hence, since $4\trid 5\not=2$, it follows from Corollary~\ref{cor:orbit3}
that $2\trid 4\not=5$. Similarly, $2\trid 6\not=7$. Further, $2\trid 8\not=9$
since otherwise
\[ 9=1\trid 9=\varphi _2\varphi _1(2)\trid \varphi _2\varphi _1(8)
   =\varphi _2\varphi _1(2\trid 8)=\varphi _2\varphi _1(9)=8,
\]
a contradiction. Hence, using our freedom to rename $4$, $5$, $6$, and $7$, we
may assume that
\[ \varphi _2\in \{ (1\,3)(4\,6)(5\,7),
   (1\,3)(4\,6)(5\,8),
   (1\,3)(4\,8)(5\,9),
   (1\,3)(4\,8)(6\,9) \}.
\]

\textit{Step 4.1.} $\varphi_2=(1\,3)(4\,6)(5\,7)$.
By conjugation with $\varphi _1$ we obtain that
$\varphi_3=(1\,2)(5\,7)(4\,6)$. Since $\#\orbit(2,4)=3$, it follows from
Corollary~\ref{cor:orbit3} that
$6\trid2=4$. Similarly, $\#\orbit(3,6)=3$ implies that $6\trid4=3$. 
This is a contradiction since $\varphi_6^2=\mathrm{id}$.

\textit{Step 4.2.} $\varphi_2=(1\,3)(4\,6)(5\,8)$.
Conjugation by $\varphi _1$ yields that
$\varphi_3=(1\,2)(5\,7)(4\,8)$, and then $\varphi _1=\varphi _{2\trid 3}
=(3\,2)(8\,7)(6\,5)$, a contradiction.

\textit{Step 4.3.} $\varphi_2=(1\,3)(4\,8)(5\,9)$.
As in the previous step we obtain that $\varphi _3=(1\,2)(5\,8)(4\,9)$
and $\varphi _1=\varphi _{2\trid 3}=(3\,2)(9\,4)(8\,5)$, a contradiction.

\textit{Step 4.4.} $\varphi_2=(1\,3)(4\,8)(6\,9)$.
Then conjugation by $\varphi _1$ yields that
$\varphi_3=(1\,2)(5\,8)(7\,9)$. Corollary~\ref{cor:orbit3} implies that
$\varphi_4=(1\,5)(2\,8)(\cdot\,\cdot )$
and
$\varphi_6=(1\,7)(2\,9)(\cdot\,\cdot )$.
Since $1\trid(8\trid9)=8\trid9$ by (R2) and $8\trid 9\not=1$ by
Corollary~\ref{cor:orbit3}, it follows that $8\trid9\in\{9,10\}$.

First we claim that $8\trid 9\not=9$. Indeed, otherwise $2\trid (8\trid
9)=2\trid 9$, and hence $4\trid 6=6$ by (R2). However, $\varphi _6\not=
 (5\,4\trid 7)(8\,4\trid 9)(\cdot \,\cdot )
 =\varphi _4\varphi _6\varphi _4^{-1}$, a contradiction.

The above arguments yield that $8\trid 9=10$. By conjugating with $\varphi _2$
we conclude that $4\trid 6=10$. It follows from Corollary~\ref{cor:orbit3}
and from (R2) that
\begin{align*}
  \varphi_1=&(2\,3)(4\,5)(6\,7),&
  \varphi_2=&(1\,3)(4\,8)(6\,9),&
  \varphi_3=&(1\,2)(5\,8)(7\,9),\\
  \varphi_4=&(1\,5)(2\,8)(6\,10),&
  \varphi_5=&(1\,4)(3\,8)(7\,10),&
  \varphi_6=&(1\,7)(2\,9)(4\,10),\\
  \varphi_7=&(1\,6)(3\,9)(5\,10),&
  \varphi_8=&(2\,4)(3\,5)(9\,10),&
  \varphi_9=&(2\,6)(3\,7)(8\,10),\\
  & & \varphi_{10}=&(4\,6)(5\,7)(8\,9).
\end{align*}
This rack is 
the rack $\mathcal{C}$.

\section{Yetter-Drinfeld modules}
\label{YD}

We refer to \cite{MR1913436} for an introduction to Yetter-Drinfeld modules
and Nichols algebras.

Let $G$ be a group. Let $g\in G$ and assume that the conjugacy
class $X$ of $g$ is finite and generates $G$.
Let $\fie $ be a field and let $V$ be a Yetter-Drinfeld module over $\fie G$.
Let $\delta:V\to \fie G\otimes V$ be the left coaction of $\fie G$ on $V$.
Then $V=\oplus_{g\in G}V_g$, where 
$V_g=\{v\in V\mid \delta{v}=g\otimes v\}$.
Moreover, $h V_g = V_{hgh^{-1}}$ for all $g,h\in G$.
Yetter-Drinfeld modules can also be studied in terms of racks and two-cocycles,
see \cite[Thm.\,4.14]{MR1994219}.

For any $g\in G$ and any representation $(\rho,W)$ of $C_{G}(g)$ let 
$M(g,\rho)=\mathrm{Ind}_{C_G(g)}^{G}\rho$ be the induced $G$-module. Then
$M(g,\rho)$ is a Yetter-Drinfeld module over $G$. The coaction of $\fie G$ 
on $M(g,\rho)$ is given by $\delta(h\otimes w)=hgh^{-1}\otimes(h\otimes w)$
for all $h\in G$ and $w\in W$.

We write $\NA(V)$ for the Nichols algebra of $V$ and $\NA_n(V)$ for the 
subspace of homogeneous elements of degree $n$, where $n\in\ndN$.

For all $g\in G$ and any linear functional $f\in V_g^*$ there exists a unique skew-derivation 
$\partial_f$ of the Nichols algebra $\NA(V)$ of $V$ such that 
\begin{align*}
\partial_f(v)&=f(v)&&\text{for all $v\in V_g$},\\
\partial_f(v)&=0&&\text{for all $v\in V_h$ with $h\in G\setminus\{g\}$},\\
\partial_f(xy)&=x\partial_f(y)+\partial_f(x)(gy)&&\text{for all $x,y\in \NA(V)$},
\end{align*}
see \cite[Proof of Lemma 3.5]{MR1779599}. 
If $\dim V_g=1$ for some $g\in G$ then we write $\partial_v$ for $\partial_{v^*}$, 
where $v\in V_g$ and $v^*$ is the dual basis vector of $v$.

\subsection{Relationship between Nichols algebras over different fields}

\begin{lem}
\label{lem:same_char}
Let $K$ be a field extension of $\fie$ and let $V_K=K\otimes_\fie V$. 
Then $V_K$ is a Yetter-Drinfeld module over $KG$ and 
any basis of $\NA(V)$ as a vector space over $\fie$ 
is a basis of $\NA(V_K)$ as a vector space over $K$. In particular, 
$\dim_K\NA(V_K)=\dim_\fie\NA(V)$.
\end{lem}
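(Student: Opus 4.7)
The plan is to reduce everything to the flatness (in fact freeness) of $K$ as a $\fie$-module and to the observation that all the structure maps defining a Nichols algebra are $\fie$-linear.

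First I would equip $V_K$ with its Yetter-Drinfeld structure: the $KG$-action is the $K$-linear extension of the $G$-action on $V$, and the coaction $\delta _K\colon V_K\to KG\otimes _K V_K$ is obtained from $\delta $ via the canonical identification $K\otimes _\fie (\fie G\otimes _\fie V)\cong KG\otimes _K V_K$. Both structure maps are obtained by applying $K\otimes _\fie (-)$ to the corresponding $\fie$-linear maps, and the Yetter-Drinfeld compatibility axiom is an equality of $\fie$-linear maps between $V\otimes V$ and $\fie G\otimes V\otimes V$; tensoring this equality with $K$ yields the analogous identity for $V_K$.

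Next I would argue that the Nichols algebra construction commutes with base change. Recall that $\NA(V)$ is the quotient of the tensor algebra $T(V)$ by $\bigoplus _{n\ge 2}\ker \mathfrak{S}_n$, where $\mathfrak{S}_n\colon V^{\otimes n}\to V^{\otimes n}$ is the quantum symmetrizer built from the braiding $c$ on $V\otimes V$. The canonical isomorphism $T(V_K)\cong K\otimes _\fie T(V)$ (which uses that $(K\otimes V)^{\otimes _K n}\cong K\otimes V^{\otimes n}$) identifies the braiding on $V_K$ with the $K$-linear extension of the braiding on $V$, and hence identifies the quantum symmetrizers on $V_K^{\otimes n}$ with $\mathrm{id}_K\otimes \mathfrak{S}_n$. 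Since $K$ is flat over $\fie$,
\[
\ker (\mathrm{id}_K\otimes \mathfrak{S}_n)=K\otimes _\fie \ker \mathfrak{S}_n,
\]
so taking the quotient yields a natural isomorphism $\NA(V_K)\cong K\otimes _\fie \NA(V)$ of graded $KG$-module algebras.

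From this isomorphism the remaining assertions are immediate: any $\fie$-basis of $\NA(V)$ becomes a $K$-basis of $K\otimes _\fie \NA(V)=\NA(V_K)$, and consequently $\dim _K\NA(V_K)=\dim _\fie \NA(V)$. The only point requiring a little care is the compatibility of the quantum symmetrizers with base change; once that is noted, flatness of the field extension does the rest, so I do not anticipate a substantial obstacle.
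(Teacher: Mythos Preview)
Your proposal is correct and follows essentially the same approach as the paper: both arguments rest on the description of the Nichols algebra via the quantum symmetrizer, and you have simply made explicit the flatness step (that $\ker(\mathrm{id}_K\otimes\mathfrak{S}_n)=K\otimes_\fie\ker\mathfrak{S}_n$) which the paper leaves to the reader.
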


\begin{proof}
It is clear that any basis of $\NA(V)$ is spanning $\NA(V_K)$ as a vector space over $K$.
The linear independence can be obtained from the description of the Nichols algebra in 
terms of the quantum symmetrizer.
\end{proof}

\begin{rem}
\label{rem:tau_p}
Assume that $\mathrm{char}\,\fie=0$ and that  
$V=M(g,\rho)$ for some $g\in G$ and a one-dimensional representation $\rho$ of $C_G(g)$ 
such that $\rho(h)\in\{-1,1\}$ for all $h\in C_G(g)$. Let $d=\dim V$ and 
let $p$ be a prime number. Fix $v\in V_g\setminus\{0\}$. Then there exist $g_1,\dots,g_d\in G$ such that 
$\{g_1v,\dots,g_dv\}$ is a basis of $V$. 
Let 
\begin{gather}
\label{eq:tau_p_V}
\tau_pV=\mathrm{span}_\ndZ\{g_1v,\dots,g_dv\}/\mathrm{span}_\ndZ\{pg_1v,\dots,pg_dv\}.
\end{gather}
Then $\tau_pV$ is a Yetter-Drinfeld module over $\mathbb{F}_pG$. Up to isomorphism 
the definition of $\tau_pV$ does not
depend on the choices of $g_1,\dots,g_d$ and $v$.
\end{rem}

The following result was proved in \cite{MR1994219} for $\fie=\ndC$. 
The proof of the theorem holds without any restriction on $\fie$.

\begin{thm}
\label{thm:AG}\cite[Theorem 6.4]{MR1994219}
Assume that $\NA(V)$ is finite-dimensional. Let $m\in\ndN$ such that $\dim\NA_m(V)=1$ and 
$\dim\NA_n(V)=0$ for all $n>m$. Let $\mathcal{J}\subseteq TV$ be an $\ndN$-graded Yetter-Drinfeld 
submodule over $\fie G$ such that $\mathcal{J}\cap \fie =\mathcal{J}\cap V=0$.
Assume 
that $\mathcal{J}$ is an ideal and a coideal of $TV$ such that $\dim T^mV/(\mathcal{J}\cap T^mV)=1$ 
and $\dim T^nV/(\mathcal{J}\cap T^nV)=0$ for all
$n>m$. Then $TV/\mathcal{J}\simeq\NA(V)$.
\end{thm}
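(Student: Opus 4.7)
The plan is to deduce the theorem from two ingredients: the universal property of the Nichols algebra, and Poincar\'e duality for finite-dimensional graded braided Hopf algebras with one-dimensional top degree. By construction, $\NA(V)=TV/\mathcal{I}(V)$, where $\mathcal{I}(V)$ is the (unique) maximal $\ndN$-graded Yetter-Drinfeld submodule of $TV$ that is simultaneously an ideal and a coideal and that intersects $\fie\oplus V$ trivially. Since $\mathcal{J}$ has exactly these properties, $\mathcal{J}\subseteq\mathcal{I}(V)$, and hence the identity on $TV$ descends to a surjective morphism of $\ndN$-graded braided Hopf algebras
\[
\pi\colon B:=TV/\mathcal{J}\twoheadrightarrow \NA(V)
\]
which is the identity in degrees $0$ and $1$.

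Next I would verify that $B$ is finite-dimensional: by hypothesis $\dim T^nV/(\mathcal{J}\cap T^nV)=0$ for $n>m$, so $B_n=0$ for $n>m$, and each $B_n$ is a quotient of the finite-dimensional space $T^nV$. In top degree, both $B_m$ and $\NA_m(V)$ are one-dimensional, so $\pi_m\colon B_m\to\NA_m(V)$ is already an isomorphism.

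The crucial input is the following Poincar\'e duality for braided Hopf algebras: a finite-dimensional graded connected braided Hopf algebra $B=\bigoplus_{n=0}^m B_n$ with $\dim B_m=1$ is Frobenius in the braided sense, and in particular the multiplication pairing $B_i\otimes B_{m-i}\to B_m$ is nondegenerate. Equivalently, for every nonzero $b\in B_i$ there exists $c\in B_{m-i}$ with $bc\ne 0$. This follows from the existence of a nondegenerate two-sided integral of $B$, with $B_m$ playing the role of the integral space.

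Granting this, the proof concludes quickly. Suppose $b\in B_i$ satisfies $\pi(b)=0$; if $b\ne0$, Poincar\'e duality applied to $B$ yields some $c\in B_{m-i}$ with $bc\ne 0$, yet $\pi(bc)=\pi(b)\pi(c)=0$, and since $\pi_m$ is injective this forces $bc=0$, a contradiction. Hence $\pi$ is injective in every degree, and thus an isomorphism. The main obstacle in this strategy is establishing the Poincar\'e duality in the braided setting; once that is available (it is standard but requires a careful use of the braiding when constructing and manipulating the integral), the rest of the argument is short and formal.
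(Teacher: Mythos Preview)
The paper does not supply its own proof of this theorem; it simply cites \cite[Theorem~6.4]{MR1994219} and remarks that the argument there goes through over any field $\fie$. Your proposal is correct and is in fact exactly the strategy of that reference: the universal property of $\NA(V)$ yields the surjection $\pi\colon B\twoheadrightarrow\NA(V)$, and the Frobenius (Poincar\'e duality) property of a finite-dimensional connected graded braided Hopf algebra with one-dimensional top degree upgrades the isomorphism $\pi_m$ to injectivity of $\pi$ in all degrees. The nondegeneracy of the pairing $B_i\otimes B_{m-i}\to B_m$ you invoke, and the fact that $B_m$ is the space of integrals, are precisely what is established and used in the cited proof.
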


\begin{thm}
\label{thm:diff_char}
Assume that $\fie=\ndQ$ and that  
$V=M(g,\rho)$ for some $g\in G$ and a one-dimensional representation $\rho$ of $C_G(g)$ 
such that $\rho(h)\in\{-1,1\}$ for all $h\in C_G(g)$.
Let $p$ be a prime number. 
Then the following hold.
\begin{enumerate}
\item $\dim_{\mathbb{F}_p}\NA_n(\tau_pV)\leq\dim_\fie\NA_n(V)$ for all $n\in\ndN$.
\item
  Let $m\in\ndN$ such that $\dim_\fie\NA_m(V)=1$ and $\dim_\fie\NA_n(V)=0$
  for all $n>m$. If $\dim_{\mathbb{F}_p}\NA_m(\tau_pV)\geq1$
  then $\dim_{\mathbb{F}_p}\NA_n(\tau_pV)=\dim_\fie\NA_n(V)$ for all $n\in\ndN$.
\end{enumerate}
\end{thm}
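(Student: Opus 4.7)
The plan is to construct an integral $\ndZ$-form of $V$ and of the defining ideal of $\NA(V)$, then transport these via reduction modulo $p$. Fix $v\in V_g\setminus\{0\}$ and coset representatives $g_1,\dots,g_d$ as in Remark \ref{rem:tau_p}, and set $L=\lspan[\ndZ]\{g_1v,\dots,g_dv\}\subset V$. Since $\rho(h)\in\{-1,1\}$ for all $h\in C_G(g)$, writing $g_ig_j=g_kh$ with $h\in C_G(g)$ yields $g_i(g_jv)=\pm g_kv\in L$, so $L$ is a $G$-stable $\ndZ$-sublattice, and the coaction $\delta(g_iv)=(g_igg_i^{-1})\otimes g_iv$ lies in $\ndZ G\otimes L$. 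Consequently the braiding $c(xv\otimes yv)=(xgx^{-1})(yv)\otimes xv$ restricts to $L\otimes_\ndZ L\to L\otimes_\ndZ L$, and every quantum symmetrizer $S_n\in\mathrm{End}(V^{\otimes n})$ restricts to an endomorphism of the free $\ndZ$-module $L^{\otimes n}$. By Remark \ref{rem:tau_p} we have $\tau_pV=L\otimes_\ndZ\mathbb{F}_p$, hence $(\tau_pV)^{\otimes n}=L^{\otimes n}\otimes_\ndZ\mathbb{F}_p$ and the quantum symmetrizer on $(\tau_pV)^{\otimes n}$ is $S_n\otimes_\ndZ\mathbb{F}_p$.

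For part (1), identifying $\NA_n(V)$ and $\NA_n(\tau_pV)$ with the images of $S_n$ over $\ndQ$ and $\mathbb{F}_p$, and using that the rank of an integer matrix never grows upon reduction modulo a prime, one obtains
\[
\dim_{\mathbb{F}_p}\NA_n(\tau_pV)=\mathrm{rank}_{\mathbb{F}_p}(S_n\otimes\mathbb{F}_p)\leq\mathrm{rank}_\ndZ(S_n\vert_{L^{\otimes n}})=\dim_\ndQ\NA_n(V).
\]

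For part (2), set $\mathcal{J}_\ndQ:=\ker(TV\twoheadrightarrow\NA(V))$, $T_\ndZ L:=\bigoplus_n L^{\otimes n}$ and $\mathcal{J}_\ndZ:=\mathcal{J}_\ndQ\cap T_\ndZ L$. Being saturated in the free $\ndZ$-module $T_\ndZ L$, the quotient $R:=T_\ndZ L/\mathcal{J}_\ndZ$ is $\ndZ$-free with graded ranks $\dim_\ndQ\NA_n(V)$. The integrality established above implies that $\mathcal{J}_\ndZ$ is a $G$-stable graded ideal of $T_\ndZ L$ and that the braided coproduct $\Delta$ of $TV$ preserves $T_\ndZ L$; the $\ndZ$-flatness of $R$ ensures that $\mathcal{J}_\ndZ\otimes T_\ndZ L+T_\ndZ L\otimes\mathcal{J}_\ndZ$ is saturated in $T_\ndZ L\otimes T_\ndZ L$, so $\mathcal{J}_\ndZ$ is also a coideal. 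Define $\mathcal{J}:=\mathcal{J}_\ndZ\otimes_\ndZ\mathbb{F}_p\subset T(\tau_pV)$; then $\mathcal{J}$ is a graded Yetter-Drinfeld sub-ideal-coideal concentrated in degrees $\geq2$, in particular $\mathcal{J}\cap\mathbb{F}_p=\mathcal{J}\cap\tau_pV=0$, and
\[
\dim_{\mathbb{F}_p}T^n(\tau_pV)/(\mathcal{J}\cap T^n(\tau_pV))=\dim_\ndQ\NA_n(V).
\]
Part (1) together with the hypothesis $\dim_{\mathbb{F}_p}\NA_m(\tau_pV)\geq1$ forces $\dim_{\mathbb{F}_p}\NA_m(\tau_pV)=1$ and $\dim_{\mathbb{F}_p}\NA_n(\tau_pV)=0$ for $n>m$, so all hypotheses of Theorem \ref{thm:AG}, applied to $\tau_pV$ and $\mathcal{J}$ over $\mathbb{F}_p$, are met. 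The theorem yields $T(\tau_pV)/\mathcal{J}\cong\NA(\tau_pV)$, and taking graded dimensions gives (2).

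The step I expect to require the most care is showing that $\mathcal{J}_\ndZ$ is a coideal of $T_\ndZ L$, not merely that $\mathcal{J}_\ndQ$ is a coideal of $TV$. This rests on two ingredients: the integrality of the braided coproduct $\Delta$ on $T_\ndZ L$ (a direct consequence of the braiding preserving $L\otimes_\ndZ L$), and the saturation of $\mathcal{J}_\ndZ\otimes T_\ndZ L+T_\ndZ L\otimes\mathcal{J}_\ndZ$ inside $T_\ndZ L\otimes T_\ndZ L$, which in turn follows from the $\ndZ$-freeness of $T_\ndZ L/\mathcal{J}_\ndZ$ built into the definition of $\mathcal{J}_\ndZ$ as the intersection of a $\ndQ$-subspace with a free $\ndZ$-module.
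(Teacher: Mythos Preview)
Your proof is correct and follows essentially the same route as the paper's: an integral lattice $L$ (called $W$ in the paper) on which the braiding and quantum symmetrizers act, the rank inequality for part (1), and for part (2) the intersection $\mathcal{J}_\ndZ=\mathcal{J}_\ndQ\cap T_\ndZ L$ (the paper's $U^n_\ndZ$), reduction modulo $p$, and Theorem~\ref{thm:AG}. The only notable difference is that you spell out the saturation argument showing $\mathcal{J}_\ndZ$ is a coideal, whereas the paper records this in one line and packages the freeness of $T_\ndZ L/\mathcal{J}_\ndZ$ as a separate lemma (Lemma~\ref{lem:ed}) via elementary divisors; you were right to flag that step as the one needing care.
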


\begin{proof}
Let $d=\dim V$ and let $g_1,\dots,g_d\in G$ such that $\{g_1v,\dots,g_dv\}$ is a basis of $V$. 
We write $v_i$ for $g_iv$ for all $i\in\{1,\dots,d\}$. 
The elements $v_1,\dots,v_d\in V$ generate an additive subgroup of $V$ which is denoted by $W$. 
For all $n\in\ndN$ let $W^{\otimes_\ndZ n}=W\otimes_\ndZ W\otimes_\ndZ\cdots\otimes_\ndZ W$ and 
let $f_p:W^{\otimes_\ndZ n}\to (\tau_pV)^{\otimes_{\mathbb{F}_p} n}$ be the 
canonical group homomorphism induced by the canonical map $\ndZ\to\mathbb{F}_p$. 

First we prove $(1)$. Let $n\in\ndN$ and 
let $\mathcal{B}$ be a subset of 
the $n$-fold tensor product $W^{\otimes_\ndZ n}$ consisting of tensor products 
of generators $v_i$, $i\in\{1,\dots,d\}$.
Assume that $f_p(\mathcal{B})$ is linearly independent in $\NA_n(\tau_pV)$. It suffices to show that 
$\mathcal{B}$ is linearly independent in $\NA_n(V)$. We give an indirect proof. 

Let $r\in\ndN$, $b_1,\dots,b_r\in\mathcal{B}$ and
$\lambda_1,\dots,\lambda_r\in\ndQ$ such that 
$\sum_{i=1}^r \lambda_i b_i=0$ in $\NA_n(V)$ 
and that $\lambda_1\ne0$.
Without loss of generality we may assume that 
$\lambda_i\in\ndZ$ for all $i\in\{1,\dots,r\}$ 
and that $\lambda_1\notin p\ndZ$. The assumption on $\rho$ implies that $W^{\otimes_\ndZ n}$ 
is stable under the action of the quantum symmetrizer. Moreover, the map $f_p$ commutes with 
the quantum symmetrizer by construction. 
Then the characterization of Nichols algebras in terms of 
quantum symmetrizers yields that $f_p(\mathcal{B})$ is linearly 
dependent in $\NA_n(\tau_pV)$, which proves the claim.

Now we prove $(2)$. For all $n\in\ndN$ with $n\geq 2$ let $U^n\subseteq V^{\otimes n}$ 
be the kernel of the quantum symmetrizer and let $U^n_\ndZ=U^n\cap W^{\otimes_\ndZ n}$. 
Then $\oplus_{n\geq2}U^n_\ndZ/pU^n_\ndZ$ is a Yetter-Drinfeld submodule of the tensor algebra $T(\tau_pV)$ of $\tau_pV$ 
by the assumption on $\rho$. Moreover, it is an ideal and a coideal 
of $T(\tau_pV)$ since $U$ is an ideal and a coideal of $TV$. Thus $B=T(\tau_pV)/\oplus_{n\geq2}U^n_\ndZ/pU^n_\ndZ$ 
is an $\ndN$-graded braided bialgebra in the category of Yetter-Drinfeld modules over $\mathbb{F}_pG$. 
Lemma \ref{lem:ed} yields that 
\begin{gather}
\label{eq:hilbert}
\mathcal{H}_{B}(t)=\mathcal{H}_{\NA(V)}(t),
\end{gather}
where $\mathcal{H}$ is the Hilbert series.
By assumption we know that $\dim B_m=1$ and that $\dim B_n=0$ for all $n>m$.
By $(1)$ and the assumption on $\NA_m(\tau_pV)$
we conclude that $\dim\NA_m(\tau_pV)=1$ and $\dim\NA_n(\tau_pV)=0$ for all
$n>m$. By Theorem \ref{thm:AG} with $\fie=\mathbb{F}_p$ it follows that
$\NA(\tau_pV)\simeq B$. Thus the claim holds by Equation \eqref{eq:hilbert}.
\end{proof}

\subsection{Quadratic relations}

Let $g\in G$ and let $(\rho,W)$ be a representation of $C_G(g)$.
We write $X$ for the conjugacy class of $g$ in $G$.
Assume that $V=M(g,\rho)$. Then $V_g\simeq W$ as $C_G(g)$-modules. Let $d$ be 
the number of elements of $X$ and let 
$e=\dim V_g$.  For any $H$-orbit $\orbit$ in $X\times X$, see Subsection \ref{ss:braided}, let 
\begin{gather}
\label{eq:VO2}
V_\orbit^{\otimes2}=\mathop{\oplus}_{(x,y)\in\orbit}V_x\otimes V_y.
\end{gather}
Then $V\otimes V=\oplus_{\orbit} V_\orbit^{\otimes2}$, where $\orbit$ is running over 
all $H$-orbits in $X\times X$. 

\begin{lem}
\label{lem:kernel}
Let $r\in V\otimes V$. For any $H$-orbit $\orbit$
let $r_\orbit$ be the projection of $r$ to $V_\orbit^{\otimes2}$.
If $(1+c)(r)=0$ then $(1+c)(r_\orbit)=0$ for all $\orbit$. 
\end{lem}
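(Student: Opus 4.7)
The plan is to show that the braiding $c$ preserves the decomposition $V\otimes V=\bigoplus_{\orbit} V_{\orbit}^{\otimes 2}$, so that the direct-sum decomposition is actually a decomposition as $H$-modules (where $H=\langle c\rangle $). Once this is established, $1+c$ preserves each summand $V_{\orbit}^{\otimes 2}$, and the projection $r\mapsto r_{\orbit}$ commutes with $1+c$; then the vanishing of $(1+c)(r)$ forces the vanishing of each $(1+c)(r_{\orbit})$ simply because the sum is direct.

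The key computation is that for $x,y\in X$, $v\in V_x$ and $w\in V_y$ the braiding on $V=M(g,\rho )$ gives $c(v\otimes w)=(x\cdot w)\otimes v\in V_{xyx^{-1}}\otimes V_x=V_{x\trid y}\otimes V_x$, so $c(V_x\otimes V_y)\subseteq V_{x\trid y}\otimes V_x$. Since $c(x,y)=(x\trid y,x)$, the pair $(x\trid y,x)$ lies in the same $H$-orbit as $(x,y)$, and hence $c(V_{\orbit}^{\otimes 2})\subseteq V_{\orbit}^{\otimes 2}$. The same argument applied to $c^{-1}$ (equivalently, to the inverse action of the generator of $H$) shows equality, so $V_{\orbit}^{\otimes 2}$ is a $c$-stable subspace.

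Given this, writing $r=\sum_{\orbit}r_{\orbit}$ with $r_{\orbit}\in V_{\orbit}^{\otimes 2}$, we have
\[
0=(1+c)(r)=\sum_{\orbit}(1+c)(r_{\orbit}),
\]
where each term lies in $V_{\orbit}^{\otimes 2}$. Because the sum $V\otimes V=\bigoplus_{\orbit}V_{\orbit}^{\otimes 2}$ is direct, each $(1+c)(r_{\orbit})$ must vanish, proving the lemma. There is no real obstacle here; the only thing to verify carefully is the stability of $V_{\orbit}^{\otimes 2}$ under $c$, which is immediate from the definition of the braiding on an induced Yetter–Drinfeld module and from the definition of the $H$-orbit.
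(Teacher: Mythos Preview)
Your proof is correct and follows exactly the same approach as the paper's proof: the paper simply states that $(1+c)(r_\orbit)\in V_\orbit^{\otimes 2}$ ``by construction'' and concludes, whereas you spell out explicitly why $c$ preserves each $V_\orbit^{\otimes 2}$ (via $c(V_x\otimes V_y)\subseteq V_{x\trid y}\otimes V_x$ and $(x\trid y,x)\in\orbit(x,y)$) before invoking the directness of the decomposition.
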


\begin{proof}
By construction, $(1+c)(r_\orbit)\in V_\orbit^{\otimes2}$ for any $H$-orbit $\orbit$. 
This implies the claim.
\end{proof}

Assume that $V$ is finite-dimensional and 
absolutely irreducible, that is, $K\otimes _\fie V$ is an irreducible
Yetter-Drinfeld module over $KG$ for any field extension $K$ of $\fie $.
In this case the Lemma of Schur implies that 
any central element in $C_G(h)$, where $h\in X$, acts on $V_h$ 
by a scalar. 

\begin{rem}
\label{rem:YD_ai}
A Yetter-Drinfeld module over $G$ is absolutely irreducible 
if and only if it is isomorphic to $M(h,\sigma)$, where $h\in G$
and $\sigma$ is an absolutely irreducible module of the centralizer $C_G(h)$.
\end{rem}

\begin{lem}
\label{lem:bound1}
Let $h\in X$ and let $\orbit=\orbit(h,h)$. Then
\[
\dim\big(\ker(1+c)\cap V_\orbit^{\otimes2}\big)\leq\frac{e(e+1)}{2}.
\]
\end{lem}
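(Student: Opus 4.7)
The plan is to reduce the problem to a direct calculation on the single space $V_h\otimes V_h$. First I would observe that because $X$ is a conjugacy class, it is a quandle (Remark~\ref{rem:crossed_set}), so $h\trid h=h$ and hence $c(h,h)=(h,h)$. Therefore the orbit $\orbit=\orbit(h,h)$ consists of the single pair $(h,h)$, and by definition~\eqref{eq:VO2} we have $V_\orbit^{\otimes 2}=V_h\otimes V_h$, a space of dimension $e^2$.

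Next I would describe the braiding on this piece. The Yetter-Drinfeld braiding is $c(v\otimes w)=v_{(-1)}\cdot w\otimes v_{(0)}$, so for $v,w\in V_h$ it takes the form $c(v\otimes w)=(h\cdot w)\otimes v$. Since $h$ lies in the center of $C_G(h)$ and $V$ is absolutely irreducible, the hypothesis recalled just before the lemma (the ``Schur lemma'' statement in the excerpt) guarantees that $h$ acts on $V_h$ by a scalar $\lambda\in\fie$. Hence $c$ restricted to $V_h\otimes V_h$ equals $\lambda\,\tau$, where $\tau$ is the flip $v\otimes w\mapsto w\otimes v$.

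The problem now reduces to bounding $\dim\ker(1+\lambda\tau)$ on $V_h\otimes V_h$. I would split into the three cases according to the value of $\lambda$. If $\lambda=-1$, then $1+c=1-\tau$ and its kernel is the space of symmetric $2$-tensors, of dimension $e(e+1)/2$. If $\lambda=1$, then $1+c=1+\tau$ and its kernel is the space of antisymmetric $2$-tensors, of dimension $e(e-1)/2$. If $\lambda\not\in\{1,-1\}$, then in a basis $e_1,\dots,e_e$ of $V_h$ the operator $1+c$ acts on each two-dimensional subspace $\lspan\{e_i\otimes e_j,e_j\otimes e_i\}$ ($i\ne j$) by the matrix $\bigl(\begin{smallmatrix}1&\lambda\\ \lambda&1\end{smallmatrix}\bigr)$, which is invertible since $1-\lambda^2\ne 0$, and it acts on each $\fie\,e_i\otimes e_i$ by $1+\lambda\ne 0$; hence the kernel is $0$. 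In every case the dimension is at most $e(e+1)/2$, which is the desired bound. The only subtle point is invoking absolute irreducibility to ensure that $h$ really acts by a scalar on $V_h$; this is the reason the lemma requires $V$ to be absolutely irreducible rather than merely irreducible.
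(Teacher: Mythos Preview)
Your argument is correct and reaches the same conclusion, but the route differs from the paper's. You compute $\ker(1+\lambda\tau)$ exactly by splitting into the three cases $\lambda=-1$, $\lambda=1$, $\lambda\notin\{1,-1\}$. The paper instead avoids any case distinction: after noting that $h$ acts on $V_h$ by a scalar $\lambda$, it simply observes that for a basis $v_1,\dots,v_e$ of $V_h$ and $j<k$ one has $(1+c)(v_j\otimes v_k)=v_j\otimes v_k+\lambda\,v_k\otimes v_j$, and since these images are linearly independent, the $e(e-1)/2$-dimensional subspace $\lspan\{v_j\otimes v_k\mid j<k\}$ meets $\ker(1+c)$ trivially. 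The bound $e^2-e(e-1)/2=e(e+1)/2$ follows immediately. This is shorter and uniform in all characteristics, whereas your case $\lambda=1$ (``antisymmetric tensors have dimension $e(e-1)/2$'') is literally false in characteristic~$2$; your overall conclusion still survives because that situation is already covered by your $\lambda=-1$ case, but the paper's argument sidesteps this awkwardness entirely. On the other hand, your approach yields the exact value of $\dim\ker(1+c)$ rather than just an upper bound, which is extra information the paper does not extract.
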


\begin{proof}
Let $v_{1},\dots ,v_{e}$ be a basis of $V_{h}$. 
Let $\lambda\in\fie$ such that $hv=\lambda v$ for all $v\in V_h$.
Then
\[
(1+c)(v_{j}\otimes v_{k})=v_{j}\otimes v_{k}+hv_{k}\otimes v_{j}=v_j\otimes v_{k}+\lambda v_{k}\otimes v_{j}
\] 
for all $j,k\in\{1,\dots,e\}$ with $j<k$. 
Hence $\lspan\{v_{j}\otimes v_{k}\mid j<k\}$ 
has trivial intersection with $\ker(1+c)$.
This completes the proof.
\end{proof}

\begin{lem}
\label{lem:bound2}
Let $h_1,h_2\in X$ with $h_1\ne h_2$ and let $\orbit=\orbit(h_1,h_2)$. Then
\[
\dim\big(\ker(1+c)\cap V_\orbit^{\otimes2}\big)\leq e^2.
\]
\end{lem}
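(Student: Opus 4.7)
The plan is to exploit the cyclic structure of the orbit $\orbit(h_1,h_2)$ under the action of $c$. Write $n=\#\orbit$ and set $(a_k,b_k)=c^k(h_1,h_2)$ for $k=0,1,\dots,n-1$, so that
\[
V_\orbit^{\otimes 2}=\bigoplus_{k=0}^{n-1} V_{a_k}\otimes V_{b_k}.
\]
Since $h_1\ne h_2$, all these summands are distinct, and the definition of $c$ shows that $c$ restricts to an isomorphism $V_{a_k}\otimes V_{b_k}\to V_{a_{k+1}}\otimes V_{b_{k+1}}$ (indices mod $n$). In particular $c$ cyclically permutes the $n$ summands.

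Next, I would decompose an arbitrary element $r\in V_\orbit^{\otimes 2}$ as $r=\sum_{k=0}^{n-1}r_k$ with $r_k\in V_{a_k}\otimes V_{b_k}$. Because $c(r_k)$ lies in $V_{a_{k+1}}\otimes V_{b_{k+1}}$, the condition $(1+c)(r)=0$ splits componentwise into
\[
r_{k+1}=-c(r_k)\qquad\text{for all }k\in\ndZ/n\ndZ.
\]
Thus an element of $\ker(1+c)\cap V_\orbit^{\otimes 2}$ is completely determined by the single component $r_0\in V_{a_0}\otimes V_{b_0}=V_{h_1}\otimes V_{h_2}$; conversely, iterating the recursion gives $r_k=(-c)^k(r_0)$, and the cyclic consistency $r_0=(-c)^n(r_0)$ is the only constraint on $r_0$.

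Therefore
\[
\dim\bigl(\ker(1+c)\cap V_\orbit^{\otimes 2}\bigr)
=\dim\bigl\{r_0\in V_{h_1}\otimes V_{h_2}\mid c^n(r_0)=(-1)^n r_0\bigr\},
\]
which is the dimension of an eigenspace of $c^n$ acting on $V_{h_1}\otimes V_{h_2}$. Since $\dim V_{h_1}=\dim V_{h_2}=e$, this eigenspace has dimension at most $e^2$, giving the claimed bound. There is essentially no obstacle here beyond bookkeeping; the argument parallels (but is cleaner than) Lemma \ref{lem:bound1}, where the identification $h_1=h_2$ forced one to mix indices inside a single tensor factor and thus produced the sharper bound $e(e+1)/2$.
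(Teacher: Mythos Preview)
Your proof is correct and follows essentially the same route as the paper: decompose $r$ into its components $r_{(g_1,g_2)}\in V_{g_1}\otimes V_{g_2}$, observe that $(1+c)(r)=0$ forces $c(r_{(g_1,g_2)})=-r_{c(g_1,g_2)}$, and conclude by transitivity of the $H$-action on $\orbit$ that $r$ is determined by its component in $V_{h_1}\otimes V_{h_2}$, which has dimension $e^2$. Your additional remark identifying the kernel with the $(-1)^n$-eigenspace of $c^n$ on $V_{h_1}\otimes V_{h_2}$ is not needed for the bound but anticipates exactly what the paper does in Example~\ref{exa:qr}.
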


\begin{proof}
Let $r\in\ker(1+c)\cap V_\orbit^{\otimes2}$ and for all $(g_1,g_2)\in\orbit$ 
let $r_{(g_1,g_2)}$ be the projection
of $r$ to $V_{g_1}\otimes V_{g_2}$.
Since $h_1\ne h_2$, we conclude that $c(g_1,g_2)\ne (g_1,g_2)$ 
for all $(g_1,g_2)\in\orbit$. 
Since $r\in\ker(1+c)$, it follows that $c(r_{(g_1,g_2)})=-r_{c(g_1,g_2)}$
for all $(g_1,g_2)\in\orbit$.
Thus the transitivity of the action of $H$ on $\orbit$
implies that $r$ is uniquely determined by $r_{(h_1,h_2)}$.
This completes the proof.
\end{proof}

Next we demostrate the calculation of quadratic relations on an example. 
This result is related to \cite[Lemma 2.2]{GG2009}. We prepare 
the example with the following lemma. 

\begin{lem}
\label{lem:qr}
Let $n\in\ndN$ and let $x,y\in X$, $v\in V_x$ and $w\in V_y$. Then 
\[
c^n(v\otimes w)=
\begin{cases}
(xy)^kx^{-k}v\otimes (xy)^ky^{-k}w & \text{ if $n=2k$, $k\in\ndN$}\\
(xy)^{k+1}y^{-k-1}w\otimes (xy)^kx^{-k}v & \text{ if $n=2k+1$, $k\in\ndN$}.
\end{cases}
\]
\end{lem}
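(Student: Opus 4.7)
My approach is straightforward induction on $n$. The two inputs are the explicit formula for the Yetter-Drinfeld braiding, $c(v'\otimes w')=g\cdot w'\otimes v'$ whenever $v'\in V_g$, and the compatibility $h\cdot V_g=V_{hgh^{-1}}$ for all $g,h\in G$, which lets me track the $G$-degree of each tensor factor after each application of $c$. Given $v\in V_x$ and $w\in V_y$, a one-line conjugation computation shows that the first factor on the right-hand side for even $n=2k$ lies in $V_{(xy)^kx(xy)^{-k}}$, and for odd $n=2k+1$ the first factor lies in $V_{(xy)^{k+1}y(xy)^{-k-1}}$; these are exactly the degrees needed to apply $c$ again.

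I would dispose of $n=0$ as trivial and check $n=1$ directly: $c(v\otimes w)=xw\otimes v=(xy)y^{-1}w\otimes v$, which matches the odd branch at $k=0$. For the inductive step I would pass from $n=2k$ to $n=2k+1$ by applying $c$ to the even formula, using the known degree of the first factor to left-multiply the second factor; the bookkeeping then reduces to the identity $(xy)^kx(xy)^{-k}\cdot (xy)^ky^{-k}=(xy)^kxy^{-k}=(xy)^{k+1}y^{-k-1}$. Passing from $n=2k+1$ to $n=2k+2$ is symmetric and hinges on the cancellation $(xy)^{k+1}y(xy)^{-k-1}\cdot (xy)^kx^{-k}=(xy)^{k+1}y\cdot y^{-1}x^{-1}\cdot x^{-k}=(xy)^{k+1}x^{-k-1}$, which immediately yields the even formula at $k+1$.

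I do not anticipate any genuine obstacle: the lemma is essentially bookkeeping, and the only content is that two short group-word telescopes collapse exactly as predicted by the stated formulas. The only point worth being careful about is keeping track of the $G$-degree of the leftmost tensor factor at each step, since that is what governs the action appearing when $c$ is applied. The result will then feed into the analysis of the quadratic relations of $\NA(V)$ in the example that follows.
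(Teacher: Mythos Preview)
Your proposal is correct and follows exactly the approach of the paper, which records only ``By induction on $n$.'' The degree tracking and the two telescoping identities you wrote down are precisely the routine bookkeeping that the paper suppresses, and they are correct as stated.
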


\begin{proof}
By induction on $n$.
\end{proof}

\begin{example}
\label{exa:qr}
Assume that $G=G_X$ and that $\dim V_g=1$. 

Let $x\in X$ and let $\orbit=\orbit(x,x)$. 
Then $\dim(V_x\otimes V_x)=1$, and $(1+c)(V_{\orbit}^{\otimes2})=0$ if and only if
$\rho(g)=-1$.

Let $x,y\in X$ with $x\ne y$. Let $\orbit=\orbit(x,y)$. The proof of Lemma \ref{lem:bound2} 
gives that $\dim(\ker(1+c)\cap V_\orbit^{\otimes2})\leq 1$. More precisely, 
let $r\in V_\orbit^{\otimes2}$ and for all $(g_1,g_2)\in\orbit$ let 
$r_{(g_1,g_2)}$ be the projection of $r$ to $V_{g_1}\otimes V_{g_2}$. 
Then $r\in\ker(1+c)$ if and only if $r_{c^n(x,y)}=(-1)^n c^n(r_{(x,y)})$ for all $n\in\ndN$. 
Hence $\dim(\ker(1+c)\cap V_\orbit^{\otimes2})=1$ if and only if 
\begin{gather}
\label{eq:cn}
\dim\big(\ker(c^m-(-1)^m)\cap V_x\otimes V_y\big)=1, 
\end{gather}
where $m=\#\orbit$. Assume now that $x=g$ and $y=hgh^{-1}$,
where $h\in G_X$ such that $y=hxh^{-1}$. Then
Lemma \ref{lem:qr} implies that
Equation \eqref{eq:cn} is equivalent to
\begin{gather}
\label{eq:rho}
\rho\big(\mathrm{Prod}(hg,h^{-1}g;m)\,\mathrm{Prod}(h^{-1}g,hg;m)\big)
=(-1)^m\rho(g)^m.
\end{gather}
Assuming the latter one further obtains that
\[
r=\lambda\sum_{n=0}^{m-1}(-1)^n c^n(v\otimes hv),
\]
where $\lambda\in\fie$ and $v\in V_g\setminus\{0\}$.
\end{example}

Recall the definition of $k_n$ and $l_n$, where $n\in\ndN$, from Subsection \ref{ss:braided}.

\begin{prop}
\label{prop:number}
The dimension of $\ker(1+c)$ is at most 
\begin{gather}
\label{eq:number}
d\frac{e(e+1)}{2}+l_{2}e^{2}+l_{3}e^{2}+\cdots
\end{gather}
\end{prop}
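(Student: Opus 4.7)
The plan is to decompose $\ker(1+c)$ according to the $H$-orbit decomposition of $X\times X$ and then apply the already-proved bounds on each orbit-component separately.

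First, I would use the decomposition $V\otimes V=\oplus _\orbit V_\orbit ^{\otimes 2}$ from \eqref{eq:VO2}, where $\orbit $ runs over $H$-orbits in $X\times X$. By Lemma~\ref{lem:kernel}, if $(1+c)(r)=0$ then $(1+c)(r_\orbit )=0$ for every orbit $\orbit $, so projection to each summand restricts to a map $\ker(1+c)\to \ker(1+c)\cap V_\orbit ^{\otimes 2}$, and these projections assemble into an isomorphism
\[
\ker(1+c)=\bigoplus _{\orbit } \bigl(\ker(1+c)\cap V_\orbit ^{\otimes 2}\bigr).
\]
Hence $\dim \ker(1+c)=\sum _\orbit \dim \bigl(\ker(1+c)\cap V_\orbit ^{\otimes 2}\bigr)$, and it suffices to bound each summand.

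Next I would split the orbits into two classes. The singleton orbits are precisely the $\orbit (h,h)$ with $h\in X$, and by Remark~\ref{rem:orbits} there are exactly $d$ of them. For each of these, Lemma~\ref{lem:bound1} yields the bound $e(e+1)/2$, contributing at most $d\,e(e+1)/2$ in total. The remaining orbits $\orbit (h_1,h_2)$ with $h_1\ne h_2$ are counted by the numbers $l_n$ for $n\ge 2$ (namely, $l_n$ is the number of orbits of size $n$). For each such orbit Lemma~\ref{lem:bound2} gives the bound $e^2$, so the total contribution is at most $(l_2+l_3+\cdots )e^2$. Adding the two contributions gives exactly the bound \eqref{eq:number}.

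There is no real obstacle here: the lemmas have done the heavy lifting, and what remains is the bookkeeping of combining the orbit decomposition of $V\otimes V$ with the counting formula from Remark~\ref{rem:orbits}. The only subtlety worth being careful about is to match the two regimes (singleton versus non-singleton orbits) with the correct weighting $e(e+1)/2$ and $e^2$ respectively, and to recognize that the singleton orbits account for precisely the $d$ copies of $V_h\otimes V_h$.
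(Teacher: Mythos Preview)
Your proposal is correct and follows essentially the same approach as the paper: the paper's proof is the single sentence ``Follows from Lemmas~\ref{lem:kernel}, \ref{lem:bound1} and \ref{lem:bound2},'' and you have simply unpacked that sentence, using the orbit decomposition of $V\otimes V$ together with the two orbitwise bounds and the count of singleton versus non-singleton orbits.
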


\begin{proof}
Follows from Lemmas \ref{lem:kernel}, \ref{lem:bound1} and \ref{lem:bound2}.
\end{proof}

\begin{rem}
\label{rem:lk}
Lemma \ref{lem:l_n} implies that 
\[
d\frac{e(e+1)}{2}+l_{2}e^{2}+l_{3}e^{2}+\cdots=e\left(\frac{d(e+1)}{2}+\frac{dk_{2}}{2}e+\frac{dk_{3}}{3}e+\cdots\right).
\]
\end{rem}

\begin{cor}
\label{cor:YD_condition}
Assume that $\dim\ker(1+c)\geq \dim V(\dim V-1)/2$. Then
\begin{equation}
\label{eq:YD_condition}
\sum_{n\geq3}\frac{n-2}{2n}k_{n}\leq\frac{1}{e}.
\end{equation}
\end{cor}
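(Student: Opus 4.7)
The plan is to chain the upper bound on $\dim\ker(1+c)$ supplied by Proposition~\ref{prop:number} (rewritten through Remark~\ref{rem:lk}) against the hypothesized lower bound $\dim V(\dim V-1)/2$, and then apply Lemma~\ref{lem:general}(4) to convert the resulting inequality into the desired form. Since $V=M(g,\rho)=\mathrm{Ind}_{C_G(g)}^G\rho$, we have $\dim V = de$, where $d=\#X$ and $e=\dim V_g=\dim W$.

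First I would combine Proposition~\ref{prop:number} with Remark~\ref{rem:lk} (noting that $l_1=d$ is separated off explicitly in the $d\cdot e(e+1)/2$ term) to get
\[
\dim\ker(1+c)\;\le\;\frac{de(e+1)}{2}+de^2\sum_{n\ge 2}\frac{k_n}{n}.
\]
Confronting this with the assumption $\frac{de(de-1)}{2}\le\dim\ker(1+c)$ and dividing through by $de$ gives
\[
\frac{de-1}{2}\;\le\;\frac{e+1}{2}+e\sum_{n\ge 2}\frac{k_n}{n},
\]
which after rearrangement becomes
\[
\frac{d-1}{2}-\frac{1}{e}\;\le\;\sum_{n\ge 2}\frac{k_n}{n}.
\]

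The final step is to rewrite the right hand side using Lemma~\ref{lem:general}(4), which says $\sum_{n\ge 2}k_n=d-1$. This yields
\[
\sum_{n\ge 2}\frac{k_n}{n}=\sum_{n\ge 2}\frac{k_n}{2}-\sum_{n\ge 2}\frac{n-2}{2n}k_n=\frac{d-1}{2}-\sum_{n\ge 3}\frac{n-2}{2n}k_n,
\]
where the last equality holds because the summand for $n=2$ is zero. Substituting back and cancelling $(d-1)/2$ on both sides produces the desired inequality $\sum_{n\ge 3}\frac{n-2}{2n}k_n\le\frac{1}{e}$. I do not foresee a substantive obstacle: the argument is an elementary chain of prior estimates together with the algebraic identity that isolates the $n=2$ contribution from the $n\ge 3$ tail. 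The only points requiring care are the correct reading of $\dim V=de$ for the induced module and the use of Lemma~\ref{lem:l_n} (via Remark~\ref{rem:lk}) to pass from the $l_n$'s to the $k_n$'s.
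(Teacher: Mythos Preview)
Your proposal is correct and follows essentially the same route as the paper: combine Proposition~\ref{prop:number} and Remark~\ref{rem:lk} with the hypothesis $\dim\ker(1+c)\ge de(de-1)/2$, then invoke Lemma~\ref{lem:general}\eqref{lem:counting_formula} to turn the resulting inequality into \eqref{eq:YD_condition}. The only cosmetic difference is that the paper phrases the last step as applying the counting formula to the left hand side, whereas you rewrite the sum $\sum_{n\ge 2}k_n/n$ on the right; these are the same algebraic manipulation.
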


\begin{proof}
Recall that $\dim V=de$. 
Since $\dim\ker(1+c)\geq de(de-1)/2$, Proposition \ref{prop:number} and Remark \ref{rem:lk} imply that 
\[
\frac{1}{2}de(de-1)\leq e\left(\frac{d(e+1)}{2}+\frac{dk_{2}}{2}e+\frac{dk_{3}}{3}e+\cdots\right).
\]
Lemma \ref{lem:general}\eqref{lem:counting_formula} applied to the left hand side of this inequality 
yields the claim.
\end{proof}



For all $n\in \ndN $ let $(n)_t=1+t+t^2+\cdots +t^{n-1}$.

\begin{thm}
\label{thm:YD}
Let $G$ be a group, $\fie$ a field, and $V$ a Yetter-Drinfeld module over the group algebra $\fie G$. 
Assume that $V$ is finite-dimensional and absolutely irreducible. 
Then $V\simeq M(g,\rho)$ for some $g\in G$ and an absolutely simple representation $\rho$ of $C_G(g)$.
Let $d_V=\dim V$.
The following are equivalent.
\begin{enumerate}
\item $\dim\mathfrak{B}_2(V)\leq\frac{d_V(d_V+1)}{2}$.
\item $\dim V_g=1$, the conjugacy class of $g$ 
is isomorphic as a rack to one of the racks listed in Table \ref{tab:racks}, and
the representation of $C_G(g)$ is given in Table \ref{tab:candc}, where 
$p:G_X\to G$ is the canonical projection and $g=px_1$.
\item There exist $n_1,n_2,\ldots ,n_{d_V}\in\ndN $
  such that the Hilbert series of $\NA(V)$ factorizes as
  $\mathcal{H}_{\mathfrak{B}(V)}(t)=(n_1)_t(n_2)_t\cdots(n_{d_V})_t$.
\end{enumerate}
\end{thm}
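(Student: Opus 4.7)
The plan is to prove the chain (1) $\Leftrightarrow$ (2) $\Leftrightarrow$ (3). The preliminary assertion that $V \simeq M(g,\rho)$ for some $g \in G$ and absolutely simple $\rho$ of $C_G(g)$ is immediate from Remark \ref{rem:YD_ai}, so the work is to characterize when (1) holds.

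For (1) $\Rightarrow$ (2), I would first convert the hypothesis into a lower bound on the kernel of the braiding-based symmetrizer via the identity $\dim \mathfrak{B}_2(V) = d_V^2 - \dim \ker(1+c)$. This gives $\dim \ker(1+c) \geq d_V(d_V-1)/2$, which is precisely the hypothesis of Corollary \ref{cor:YD_condition} and yields
\[
\sum_{n \geq 3}\frac{n-2}{2n} k_n \leq \frac{1}{e}, \qquad e = \dim V_g.
\]
If $e = 1$, Theorem \ref{thm:racks} immediately identifies the conjugacy class of $g$ as one of the nine racks in Table \ref{tab:racks}. For each such rack, the explicit quadratic-relation criterion of Example \ref{exa:qr}, in particular Equation \eqref{eq:rho}, pins down exactly which one-dimensional representations $\rho$ of $C_G(g)$ attain the required kernel dimension; these are the entries of Table \ref{tab:candc}. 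If instead $e \geq 2$, the estimate tightens to $\sum \leq 1/2$ and Theorem \ref{thm:racks} restricts the rack to $\mathbb{D}_3$ or $\mathcal{T}$; for each of these I would analyze the centralizer structure and its absolutely irreducible representations of dimension $\geq 2$, sharpening Lemmas \ref{lem:bound1} and \ref{lem:bound2} at each orbit to show that the required kernel bound cannot be met.

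For (2) $\Rightarrow$ (3) one invokes, rack by rack, the Hilbert series computations of Propositions \ref{pro:D3_main}, \ref{pro:T_main}, \ref{pro:aff_main}, \ref{pro:A_main}, \ref{pro:B_main}, and \ref{pro:C_main} collected in Table \ref{tab:nichols}; each exhibits $\mathcal{H}_{\mathfrak{B}(V)}(t) = \prod_{i=1}^{d_V}(n_i)_t$ with all $n_i \geq 2$. For (3) $\Rightarrow$ (1) the coefficient of $t^2$ in such a product is
\[
\dim \mathfrak{B}_2(V) = \binom{d_V}{2} + \#\{i : n_i \geq 3\} \leq \binom{d_V}{2} + d_V = \frac{d_V(d_V+1)}{2},
\]
closing the cycle.

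The main obstacle is the case-by-case analysis inside (1) $\Rightarrow$ (2): eliminating $e \geq 2$ on $\mathbb{D}_3$ and $\mathcal{T}$, and, for each rack in Table \ref{tab:racks}, matching the solutions of \eqref{eq:rho} with the list of admissible representations in Table \ref{tab:candc}. The direction (2) $\Rightarrow$ (3) in turn depends on the substantive computations behind the propositions of Table \ref{tab:nichols}, including the use of Theorem \ref{thm:diff_char} to transport a characteristic $0$ calculation to the new characteristic $2$ example of Proposition \ref{pro:Tchar2}.
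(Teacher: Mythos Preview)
Your overall architecture matches the paper's proof: the chain $(1)\Rightarrow(2)\Rightarrow(3)\Rightarrow(1)$, the reduction of (1) to Corollary~\ref{cor:YD_condition} and then Theorem~\ref{thm:racks}, the delegation of $(2)\Rightarrow(3)$ to the propositions in Section~\ref{s:examples}, and the Hilbert-series coefficient computation for $(3)\Rightarrow(1)$ are all exactly as in the paper.

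The one substantive divergence is your treatment of the case $e\geq 2$. You propose to analyze the absolutely irreducible $C_G(g)$-representations of dimension $\geq 2$ and then sharpen the orbit-by-orbit bounds of Lemmas~\ref{lem:bound1} and~\ref{lem:bound2} to exclude them. The paper's argument is much shorter and avoids any such analysis: once the rack is known to be $\mathbb{D}_3$ or $\mathcal{T}$, Lemmas~\ref{lem:D3_cen} and~\ref{lem:T_cen} show that $C_{G_X}(x_1)$ is abelian, and Remark~\ref{rem:centralizer} then forces $C_G(g)$ to be abelian as well. An abelian group has no absolutely irreducible representations of dimension $\geq 2$, so $e\geq 2$ is impossible outright. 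Your route would presumably arrive at the same conclusion upon ``analyzing the centralizer structure'', but the sharpening of the kernel bounds you anticipate is never needed.

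One small point on $(3)\Rightarrow(1)$: your formula $\dim\NA_2(V)=\binom{d_V}{2}+\#\{i:n_i\geq 3\}$ is correct only once you know all $n_i\geq 2$. This is forced, since the coefficient of $t$ in $\prod_i(n_i)_t$ equals $\#\{i:n_i\geq 2\}$ and must agree with $\dim\NA_1(V)=d_V$; you should state this explicitly.
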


\begin{proof}
First we prove that $(1)$ implies $(2)$. Let $e=\dim V_g$. Since 
\[
\NA_2(V)\simeq V\otimes V/\ker(1+c),
\]
claim $(1)$ implies that $\dim\ker(1+c)\geq d_V(d_V-1)/{2}$.
By Corollary \ref{cor:YD_condition} we obtain that
\[
\sum_{n\geq3}\frac{n-2}{2n}k_{n}\leq\frac{1}{e}.
\]
If $e=1$ then by Theorem \ref{thm:racks} the conjugacy class of $g$ 
is isomorphic as a rack to one of the racks listed in Table \ref{tab:racks}.
The claim on the representation of $C_G(g)$ is proved case by case in 
Section \ref{s:examples}.

Assume that $e\geq 2$. Then from Theorem \ref{thm:racks} and the last column 
of Table \ref{tab:racks} we conclude that 
the conjugacy class of $g$ is isomorphic as a rack to $\mathbb{D}_3$
or $\mathcal{T}$. 
Let $p:G_X\to G$ be the canonical projection and let $x\in p^{-1}g$. 
Remark \ref{rem:centralizer} implies that $p^{-1}C_G(g)=C_{G_X}(x)$. 
The centralizer $C_{G_X}(x)$ is abelian 
by Lemmas \ref{lem:D3_cen} and \ref{lem:T_cen}, and hence $C_G(g)$ is abelian.
Since $V$ is absolutely irreducible, Remark \ref{rem:YD_ai} implies that 
$e=\dim V_g=1$, a contradiction.

The implication $(2)\Rightarrow(3)$ is proved case by case in Section \ref{s:examples}. 
 
Finally, $(3)$ implies $(1)$ since $\dim\NA_2(V)$ is the coefficient of $t^2$ in 
the Hilbert series $\mathcal{H}_{\NA(V)}(t)$.
\end{proof}

\begin{table}[ht]
\caption{Centralizers and characters}
\begin{center}
\label{tab:candc}
\begin{tabular}{|c|c|c|}
\hline 
Rack & Generators of $C_{G}(px_1)$ & Linear character $\rho$ on $C_{G}(px_1)$ \tabularnewline
\hline 
$\mathbb{D}_3$ & $px_1$ & $\rho(px_1)=-1$\\
\hline 
$\mathcal{T}$ & $px_1,\;p(x_4x_2)$ & $\rho(px_1)=-1,\;\rho(p(x_4x_2))=1$\\
\hline 
$\mathrm{Aff}(5,2)$ & $px_1$ & $\rho(px_1)=-1$\\
\hline 
$\mathrm{Aff}(5,3)$ & $px_1$ & $\rho(px_1)=-1$\\
\hline 
$\mathcal{A}$ & $px_1,\;px_4$& $\rho(px_1)=-1,\;\rho(px_4)=\pm1$\\
\hline 
$\mathcal{B}$ & $px_1,\;px_6$ & $\rho(px_1)=\rho(px_6)=-1$\\
\hline
$\mathrm{Aff}(7,3)$ & $px_1$ & $\rho(px_1)=-1$\\
\hline 
$\mathrm{Aff}(7,5)$ & $px_1$ & $\rho(px_1)=-1$\\
\hline 
$\mathcal{C}$ & $px_1,\;px_8,\;px_9$ & $\rho(px_1)=-1,\;\rho(px_8)=\rho(px_9)=\pm1$\\
\hline
\end{tabular}
\end{center}
\end{table}

\section{Examples}

\label{s:examples}
Let $\fie$ be a field and let $X$ be an indecomposable injective finite rack. 
Recall that $G_X$ is the enveloping group of $X$. 
Identify $X$ with $\{1,\dots,d\}$, where $d=\#X$, and write $x_i\in G_X$ for 
the image of $i\in X$ in $G_X$.

\begin{lem}
\label{lem:rho(g)=-1}
Let $\rho$ be a linear character of $C_{G_X}(x_1)$. 
Let $V=M(x_1,\rho)$ and $d_V=\dim V$. Assume that $d_V>1$. 
If $\dim\NA_2(V)\leq d_V(d_V+1)/2$ then
$\rho(x_1)=-1$. 
\end{lem}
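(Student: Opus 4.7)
The plan is to exploit the orbit decomposition $V\otimes V=\bigoplus_\orbit V_\orbit^{\otimes 2}$ from Subsection \ref{ss:braided} and Equation~\eqref{eq:VO2}, together with the pointwise bounds of Lemmas \ref{lem:kernel}, \ref{lem:bound1}, \ref{lem:bound2}. Since $\NA_2(V)=V\otimes V/\ker(1+c)$, the hypothesis $\dim\NA_2(V)\le d_V(d_V+1)/2$ is equivalent to the lower bound
\[
\dim\ker(1+c)\ge \frac{d_V(d_V-1)}{2}.
\]
I would then argue by contradiction, showing that if $\rho(x_1)\ne -1$ this lower bound fails.

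First I would unpack the structure of $V$. Since $\rho$ is one-dimensional, $\dim V_h=1$ for every $h$ in the conjugacy class of $x_1$ in $G_X$. Because $X$ is injective and indecomposable, Corollary \ref{cor:Inn(X)} and the fact that $X$ is closed under conjugation in $G_X$ imply that this conjugacy class is exactly (the image of) $X$, so $d_V=\#X=:d$. For each $h\in X$ choose a basis vector $v_h\in V_h$ using the induced module description: if $h=k x_1 k^{-1}$, let $v_h=k\otimes 1$. A direct computation in $M(x_1,\rho)$ gives
\[
h\cdot v_h=(hk)\otimes 1=k(k^{-1}hk)\otimes 1=kx_1\otimes 1=\rho(x_1)\,v_h,
\]
where the last equality uses $x_1\in C_{G_X}(x_1)$. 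Consequently
\[
(1+c)(v_h\otimes v_h)=v_h\otimes v_h+h\cdot v_h\otimes v_h=\bigl(1+\rho(x_1)\bigr)v_h\otimes v_h.
\]

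Now suppose for contradiction that $\rho(x_1)\ne -1$. Then for every $h\in X$ the diagonal orbit $\orbit(h,h)$ contributes trivially: $\ker(1+c)\cap V_{\orbit(h,h)}^{\otimes 2}=0$. For each non-diagonal $H$-orbit $\orbit$, Lemma \ref{lem:bound2} with $e=1$ gives $\dim\bigl(\ker(1+c)\cap V_\orbit^{\otimes 2}\bigr)\le 1$. Combining these bounds through Lemma \ref{lem:kernel} yields
\[
\dim\ker(1+c)\le\sum_{n\ge 2}l_n.
\]
Equation~\eqref{eq:Lcounting_formula} reads $d+2l_2+3l_3+\cdots=d^2$, hence $2\sum_{n\ge 2}l_n\le\sum_{n\ge 2}n\,l_n=d(d-1)$, so $\sum_{n\ge 2}l_n\le d(d-1)/2$ with equality if and only if $l_n=0$ for every $n\ge 3$.

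Combining the upper bound with the lower bound $\dim\ker(1+c)\ge d(d-1)/2$ forces equality throughout. Thus $l_n=0$ for all $n\ge 3$, so every orbit $\orbit(i,j)$ with $i\ne j$ has size two. By Remark \ref{rem:orbits} this means $i\trid j=j$ for all $i,j\in X$ with $i\ne j$; each $\varphi_i$ then fixes $X\setminus\{i\}$ pointwise and therefore fixes $i$ as well, so $\Inn X=\{\mathrm{id}\}$. This contradicts the indecomposability of $X$ together with $d>1$. Hence $\rho(x_1)=-1$.

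The only even mildly technical step is the verification $h\cdot v_h=\rho(x_1)v_h$ in the induced module; the remainder is bookkeeping with the orbit count and the two pointwise bounds already established.
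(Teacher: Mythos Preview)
Your argument is correct and follows the same route as the paper's proof: assuming $\rho(x_1)\ne -1$, the diagonal orbits contribute nothing to $\ker(1+c)$, the off-diagonal ones contribute at most $1$ each by Lemma~\ref{lem:bound2}, and then a count using Equation~\eqref{eq:Lcounting_formula} together with indecomposability of $X$ and $d>1$ yields the contradiction. The paper compresses your steps~6--11 into the single line ``Since $d_V>1$ and $X$ is indecomposable, it follows that $\dim\NA_2(V)>d_V(d_V+1)/2$'', whereas you spell out the equality case $l_n=0$ for $n\ge 3$ and explicitly derive that $\Inn X$ would be trivial; but the underlying counting is identical.
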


\begin{proof}
Assume that $\rho(x_1)\ne-1$. Then $v^2\ne 0$ in $\NA(V)$ for all 
$v\in V_x\setminus\{0\}$ and all $x\in X$.
Since $\dim V_x=1$ for all $x\in X$, we conclude that
\[
\dim\NA_2(V)\geq d_V^2 - \#\{\orbit(x,y)\mid x,y\in X,\,x\ne y\}
\]
by Lemmas \ref{lem:kernel} and \ref{lem:bound2}. 
Since $d_V>1$ and $X$ is indecomposable, it follows that 
$\dim\NA_2(V)>d_V(d_V+1)/2$, a contradiction.
\end{proof}


\subsection{The rack $\mathbb{D}_3$}

Let $X=\mathbb{D}_3$. Then $\overline{G_X}\simeq\mathbb{S}_3$. 
In fact, there is an isomorphism
$\overline{G_X}\to\mathbb{S}_3$ given by
\[
\pi x_1\mapsto(2\,3),\quad \pi x_2\mapsto(1\,2),\quad \pi x_3\mapsto(1\,3).
\]

\begin{lem}
\label{lem:D3_cen}
The centralizer of $x_1$ in $G_X$ is the cyclic group generated by $x_1$.
\end{lem}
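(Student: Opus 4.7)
The plan is to invoke Lemma~\ref{lem:centralizer}\eqref{lem:centralizer_cy}, which reduces the problem to checking that the centralizer of $\pi x_1$ in $\overline{G_X}$ is cyclic and generated by $\pi x_1$. Since each $\varphi_i$ is a transposition in $\mathbb{D}_3$, the order $n$ of $\varphi_1$ is $2$, so $\overline{G_X}=G_X/\langle x_1^2\rangle$ and by the identification stated just before the lemma we have $\overline{G_X}\simeq \mathbb{S}_3$ with $\pi x_1$ corresponding to the transposition $(2\,3)$.

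First I would compute $C_{\mathbb{S}_3}((2\,3))$: since a permutation commutes with a transposition if and only if it preserves its support set, and the only permutations in $\mathbb{S}_3$ preserving $\{2,3\}$ are $\mathrm{id}$ and $(2\,3)$ itself, this centralizer is $\{\mathrm{id},(2\,3)\}=\langle (2\,3)\rangle$. Therefore $C_{\overline{G_X}}(\pi x_1)$ is cyclic of order $2$, generated by $\pi x_1$.

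Then Lemma~\ref{lem:centralizer}\eqref{lem:centralizer_cy} applies directly and gives that $C_{G_X}(x_1)$ is cyclic, generated by $x_1$, which is the claim. No step here is really an obstacle; the only substantive ingredient is the presentation $\overline{G_X}\simeq \mathbb{S}_3$, which is given, and the trivial centralizer computation in $\mathbb{S}_3$.
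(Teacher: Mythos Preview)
Your proposal is correct and follows exactly the paper's approach: the paper's proof is the one-line statement that the result follows from Lemma~\ref{lem:centralizer}\eqref{lem:centralizer_cy}, since the centralizer of $\pi x_1$ in $\mathbb{S}_3$ is the cyclic group generated by $\pi x_1$. You have simply spelled out the centralizer computation in $\mathbb{S}_3$ explicitly.
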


\begin{proof}
Follows from Lemma \ref{lem:centralizer}\eqref{lem:centralizer_cy}, since the centralizer of $\pi x_1$ in $\mathbb{S}_3$
is the cyclic group generated by $\pi x_1$.
\end{proof}

\begin{rem}
\label{rem:D3}
Since $G_{G_X}(x_1)$ is abelian, any absolutely simple representation of $G_{G_X}(x_1)$ 
is a linear character. 
Since $C_{G_X}(x_1)$ is cyclic, any linear character on $C_{G_X}(x_1)$ is determined by
its action on $x_1$. 
\end{rem}

\begin{prop}
\label{pro:D3_main}
Let $\rho$ be an absolutely simple representation of $C_{G_X}(x_1)$. Let $V=M(x_1,\rho)$ 
and $d_V=\dim V$.
\begin{enumerate}
\item The representation $\rho$ is a linear character on $C_{G_X}(x_1)$ and hence $d_V=3$. 
Moreover, $\dim\NA_2(V)\leq d_V(d_V+1)/2$ if and only if $\rho(x_1)=-1$. 
\item Assume that $\rho(x_1)=-1$. Then the following hold.
\begin{enumerate}
\item $\mathcal{H}_{\NA(V)}(t)=(2)^2_t (3)_t$ and $\dim\NA(V)=12$.
\item Let $v_i\in V_{x_i}$ with $i\in X$ be non-zero elements. Then $v_1v_2v_1v_3$ is an integral of $\NA(V)$.
\end{enumerate}
\end{enumerate}
\end{prop}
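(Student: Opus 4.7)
The plan is to handle part (1) rapidly and then focus the main work on part (2). For (1), Lemma \ref{lem:D3_cen} gives that $C_{G_X}(x_1) = \langle x_1\rangle$ is cyclic, hence abelian; thus every absolutely simple representation of $C_{G_X}(x_1)$ is one-dimensional, so $\rho$ is a linear character. Since $X$ is injective, the conjugacy class of $x_1$ in $G_X$ has exactly $\#X = 3$ elements, whence $d_V = [G_X : C_{G_X}(x_1)] \cdot \dim \rho = 3$. The forward implication of the stated equivalence is Lemma \ref{lem:rho(g)=-1} applied to $V$. For the backward implication I would invoke part (2a): assuming $\rho(x_1) = -1$, the claimed Hilbert series $(2)_t^2(3)_t = 1 + 3t + 4t^2 + 3t^3 + t^4$ yields $\dim \NA_2(V) = 4 \le 6 = d_V(d_V+1)/2$.

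For (2a), assume $\rho(x_1) = -1$ and identify the quadratic relations. Combining Lemma \ref{lem:bound1} with $\rho(x_1) = -1$ forces $v_i^2 = 0$ in $\NA(V)$ for every $i\in X$ (three relations, one per diagonal orbit). The remaining $H$-orbits in $X\times X$ are the two orbits of size $3$ appearing in Table \ref{tab:racks}; Example \ref{exa:qr} with $m=3$ and the explicit description of the centralizer confirms that the cocycle condition \eqref{eq:rho} is satisfied, so each such orbit contributes one cyclic quadratic relation of the schematic form $v_i v_j + v_j v_k + v_k v_i = 0$ (with signs fixed by the braiding). Let $\mathcal{J} \subseteq TV$ be the two-sided ideal generated by these five quadratic elements. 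Then $\mathcal{J}$ is a graded Yetter-Drinfeld submodule of $TV$, and simultaneously an ideal and a coideal. The crux is to verify that the Hilbert series of the quotient $TV/\mathcal{J}$ coincides with $(2)_t^2(3)_t$; in particular, the top component in degree $4$ is one-dimensional and all higher degrees vanish. Theorem \ref{thm:AG} then yields $TV/\mathcal{J} \cong \NA(V)$, hence $\mathcal{H}_{\NA(V)}(t) = (2)_t^2(3)_t$ and $\dim \NA(V) = 12$.

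For (2b), once (2a) is available, $\NA_4(V)$ is one-dimensional and any non-zero element of it is automatically an integral. It therefore suffices to show that $v_1 v_2 v_1 v_3 \ne 0$ in $\NA(V)$. I would iteratively apply the skew-derivations $\partial_{v_i}$ introduced in Section \ref{YD}, whose Leibniz-type rule combined with $v_i^2 = 0$ and the cyclic quadratic relations reduces a suitable composition (for instance $\partial_{v_3}\partial_{v_1}\partial_{v_2}\partial_{v_1}$, up to sign bookkeeping) to a non-zero scalar; this forces $v_1 v_2 v_1 v_3$ to be non-zero.

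The main obstacle is the Hilbert series computation for $TV/\mathcal{J}$ in (2a): this is essentially the classical Fomin--Kirillov verification and, although finite and in principle mechanical, requires either a careful combinatorial argument controlling the normal-form monomials modulo $\mathcal{J}$ or a computer-algebra confirmation via \textsf{GAP}/\textsf{RIG} as indicated in the proof of Theorem \ref{thm:racks}.
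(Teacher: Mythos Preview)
Your outline is sound and matches the paper's argument for part~(1) essentially verbatim (the paper phrases it via Remark~\ref{rem:D3} rather than Lemma~\ref{lem:D3_cen}, but the content is identical). For part~(2) the paper simply cites \cite[Example~6.4]{MR1800714} and gives no further details, whereas you reconstruct the argument behind that citation: identify the five quadratic relations, form the ideal $\mathcal{J}$, compute $\mathcal{H}_{TV/\mathcal{J}}$, and invoke Theorem~\ref{thm:AG}. That is a legitimate and more self-contained route.

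One logical point deserves tightening. Theorem~\ref{thm:AG} takes as \emph{hypothesis} that $\NA(V)$ is finite-dimensional with $\dim\NA_m(V)=1$ at its top degree~$m$; knowing only that $TV/\mathcal{J}$ has Hilbert series $(2)_t^2(3)_t$ does not by itself supply this. You need the skew-derivation computation from your~(2b) \emph{before} you can apply Theorem~\ref{thm:AG}: from $\mathcal{J}\subseteq\ker(TV\to\NA(V))$ you get $\dim\NA_n(V)\le\dim(TV/\mathcal{J})_n$, hence $\NA(V)$ is finite-dimensional with $\dim\NA_4(V)\le 1$; the non-vanishing $\partial_{v_3}\partial_{v_1}\partial_{v_2}\partial_{v_1}(v_1v_2v_1v_3)\ne 0$ then forces $\dim\NA_4(V)=1$, and only now are the hypotheses of Theorem~\ref{thm:AG} met. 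So (2b) feeds into (2a), not the other way around.
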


\begin{proof}
The representation $\rho$ is a linear character on $C_{G_X}(x_1)$ by Remark \ref{rem:D3}. 
Further, if $\dim\NA_2(V)\leq d_V(d_V+1)/2$ then $\rho(x_1)=-1$ by Lemma \ref{lem:rho(g)=-1}.

Assume now that $\rho(x_1)=-1$. It suffices to prove part $(2)$ of the claim. This follows 
from \cite[Example 6.4]{MR1800714}. 
\end{proof}

\subsection{The rack $\mathcal{T}$}

Let $X=\mathcal{T}$. Recall that in $G_X$ the following relations hold.
\begin{align}
\label{eq:GT_relations1}
x_1x_2&=x_4x_1=x_2x_4,& x_1x_4&=x_3x_1=x_4x_3,\\
\label{eq:GT_relations2}
x_1x_3&=x_2x_1=x_3x_2,& x_2x_3&=x_4x_2=x_3x_4.
\end{align}

\begin{lem}
\label{lem:T_cen}
The centralizer of $x_1$ in $G_X$ is the abelian group 
generated by $x_1$ and $x_4 x_2$.
These elements satisfy the relation $(x_4 x_2)^2=x_1^4$.
\end{lem}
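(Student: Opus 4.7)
The plan is to separate the claim into the algebraic identities $[x_1,x_4x_2]=1$ and $(x_4x_2)^2=x_1^4$ on the one hand, which are direct consequences of \eqref{eq:GT_relations1}--\eqref{eq:GT_relations2} together with Lemma~\ref{lem:powers}, and on the other hand the reverse inclusion $C_{G_X}(x_1)\subseteq\langle x_1,x_4x_2\rangle$, for which I would invoke Lemma~\ref{lem:centralizer} after an analysis of the finite quotient $\overline{G_X}=G_X/\langle x_1^3\rangle$.

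For the commutation, using $x_1x_4=x_3x_1$, $x_1x_2=x_4x_1$ and $x_3x_4=x_4x_2$ successively I would write
\[
x_1(x_4x_2)=(x_3x_1)x_2=x_3(x_4x_1)=(x_3x_4)x_1=(x_4x_2)x_1.
\]
For the relation, $x_2x_4=x_1x_2$ followed by $x_4x_1=x_1x_2$ gives
\[
(x_4x_2)^2=x_4(x_2x_4)x_2=x_4(x_1x_2)x_2=(x_4x_1)x_2^2=(x_1x_2)x_2^2=x_1x_2^3,
\]
and Lemma~\ref{lem:powers}, applied to the indecomposable rack $\mathcal{T}$ whose permutations $\varphi_i$ are all $3$-cycles, yields $x_2^3=x_1^3$, so $x_1x_2^3=x_1^4$. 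This already gives $\langle x_1,x_4x_2\rangle\subseteq C_{G_X}(x_1)$, together with its abelianness.

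For the reverse inclusion Lemma~\ref{lem:centralizer}(1) reduces the task to showing $C_{\overline{G_X}}(\pi x_1)=\langle\pi(x_4x_2)\rangle$. Two facts are immediate: first, $\pi$ is injective on $X$, since any coincidence $\pi x_i=\pi x_j$ would force $x_ix_j^{-1}\in\langle x_1^3\rangle$, violating the $\ndZ$-grading of $G_X$, so the conjugacy class of $\pi x_1$ in $\overline{G_X}$ has exactly $d=4$ elements and $[\overline{G_X}:C_{\overline{G_X}}(\pi x_1)]=4$; second, $\pi(x_4x_2)^2=\pi x_1$ has order $3$ in $\overline{G_X}$, hence $\langle\pi(x_4x_2)\rangle$ is a cyclic subgroup of $C_{\overline{G_X}}(\pi x_1)$ of order $3$ or $6$.

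The main obstacle is to pin down $|\overline{G_X}|=24$, which then forces $|C_{\overline{G_X}}(\pi x_1)|=6$, $\pi(x_4x_2)$ of order $6$, and therefore $C_{\overline{G_X}}(\pi x_1)=\langle\pi(x_4x_2)\rangle$. I would address this by constructing an explicit isomorphism $\overline{G_X}\simeq\mathrm{SL}(2,3)$, the unique non-split central extension of $\mathbb{A}_4\simeq\Inn X$ by $\ndZ/2$: assign to each $\pi x_i$ the order-$3$ lift of $\varphi_i$ in $\mathrm{SL}(2,3)$, check that the defining rack relations \eqref{eq:GT_relations1}--\eqref{eq:GT_relations2} are preserved, and prove injectivity using the normal form $x_1^{a_1}x_2^{a_2}x_3^{a_3}x_4^{a_4}$ with $a_i\in\{0,1,2\}$ supplied by the finiteness proof of $\overline{G_X}$. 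Under this isomorphism $\pi(x_4x_2)$ corresponds to an order-$6$ element (namely the nontrivial order-$6$ lift of $\varphi_4\varphi_2=\varphi_1^{-1}$), which completes the count. Lemma~\ref{lem:centralizer}(1)--(2) then delivers both $C_{G_X}(x_1)=\langle x_1,x_4x_2\rangle$ and the fact that it is abelian, finishing the proof.
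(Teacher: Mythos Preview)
Your verification of the two algebraic identities is correct and essentially identical to the paper's computation. The divergence is in how you establish $C_{G_X}(x_1)\subseteq\langle x_1,x_4x_2\rangle$.

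The paper bypasses $\overline{G_X}$ entirely. It argues directly in $G_X$: since the conjugacy class of $x_1$ has $4$ elements, $[G_X:C_{G_X}(x_1)]=4$; so with $C=\langle x_1,x_4x_2\rangle\subseteq C_{G_X}(x_1)$ already established, it suffices to show $[G_X:C]\le 4$. This is done by checking, using relations \eqref{eq:GT_relations1}--\eqref{eq:GT_relations2} and the centrality of $x_i^3$, that $G_X/C=\{C,x_2C,x_3C,x_4C\}$ (for instance $x_2^2C=x_2^2x_1C=x_2x_3x_2C=x_3x_4x_2C=x_3C$). That single index bound finishes the proof in a few lines.

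Your route through Lemma~\ref{lem:centralizer} and an identification $\overline{G_X}\simeq\mathrm{SL}(2,3)$ is conceptually sound but substantially longer, and the sketch has a gap: the ``normal form $x_1^{a_1}x_2^{a_2}x_3^{a_3}x_4^{a_4}$ with $a_i\in\{0,1,2\}$'' coming from the finiteness lemma only yields $|\overline{G_X}|\le 81$, not $\le 24$, so it does not by itself give injectivity of your map to $\mathrm{SL}(2,3)$. You would still need a further reduction of these words---and the most natural such reduction is exactly the coset enumeration modulo $C$ that the paper performs. (A small side remark: in the paper's convention for composition, $\varphi_4\varphi_2=(1\,4\,2)\ne(2\,4\,3)=\varphi_1^{-1}$, so your parenthetical identification is off, though this does not affect the logic.) The upshot is that your detour, once patched, essentially rediscovers the paper's index argument inside $\overline{G_X}$; doing it directly in $G_X$ as the paper does is both shorter and avoids Lemma~\ref{lem:centralizer} altogether.
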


\begin{proof}
First one checks that $x_4 x_2\in C_{G_X}(x_1)$ and hence $C=\langle x_4x_2,x_1\rangle$ 
is a subgroup of $C_{G_X}(x_1)$.
Lemmas \ref{lem:central} and \ref{lem:powers} imply that for all $i\in X$ 
the elements $x_i^3\in G_X$ are central and that $x_1^3=x_2^3=x_3^3=x_4^3$. 
By Equations \eqref{eq:GT_relations1} and \eqref{eq:GT_relations2} we conclude that 
\[
G_X/C=\{C,x_2C,x_3C,x_4C\}.
\]
Indeed, for example 
\[
x_2^2C=x_2^2x_1C=x_2x_3x_2C=x_3x_4x_2C=x_3C.
\]
Hence $[G_X:C]\leq 4$. 
Since $\#\mathcal{O}_{x_1}=4$, it follows that $C_{G_X}(x_1)=\langle x_1,x_4x_2\rangle$. 
Moreover, 
\[
(x_4 x_2)^2=x_4(x_2x_4)x_2=x_4^2(x_1x_2)=x_4^3x_1
\]
and Lemma \ref{lem:powers} implies that $x_4^3x_1=x_1^4$. This concludes the proof.
\end{proof}

\begin{prop}
\label{pro:T_main}
Let $\rho$ be an absolutely simple representation of $C_{G_X}(x_1)$. Let $V=M(x_1,\rho)$ 
and $d_V=\dim V$.
\begin{enumerate}
\item The representation $\rho$ is a linear character on $C_{G_X}(x_1)$, hence $d_V=4$. 
Moreover, $\dim\NA_2(V)\leq d_V(d_V+1)/2$ if and only if $\rho(x_1)=-1$ and $\rho(x_4x_2)=1$. 
\item Assume that $\mathrm{char}\,\fie\ne2$, $\rho(x_1)=-1$ and $\rho(x_4x_2)=1$. Then the following hold.
\begin{enumerate}
\item $\mathcal{H}_{\NA(V)}(t)=(2)^2_t (3)_t(6)_t$ and $\dim\NA(V)=72$.
\item Let $v_i\in V_{x_i}$ with $i\in X$ be non-zero elements. Then $v_1v_2v_1v_3v_2v_1v_3v_2v_4$ is an integral of $\NA(V)$.
\end{enumerate}
\item Assume that $\mathrm{char}\,\fie=2$, $\rho(x_1)=-1$ and $\rho(x_4x_2)=1$. Then the following hold.
\begin{enumerate}
\item $\mathcal{H}_{\NA(V)}(t)=(2)^2_t (3)^2_t$ and $\dim\NA(V)=36$.
\item Let $v_i\in V_{x_i}$ with $i\in X$ be non-zero elements. Then $v_1v_2v_1v_3v_2v_4$ is an integral of $\NA(V)$.
\end{enumerate}
\end{enumerate}
\end{prop}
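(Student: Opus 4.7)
The plan is to prove the three parts in sequence.

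Part (1) is a direct application of the orbit analysis already developed. By Lemma~\ref{lem:T_cen} the centralizer $C_{G_X}(x_1)$ is abelian (generated by $x_1$ and $x_4 x_2$ with $(x_4 x_2)^2 = x_1^4$), so any absolutely simple representation is one-dimensional and $d_V = \#X = 4$. The bound $\dim \NA_2(V) \leq d_V(d_V+1)/2 = 10$ is equivalent, via Proposition~\ref{prop:number} and Lemma~\ref{lem:kernel}, to $\dim \ker(1+c) \ge 6$. Since $k_3 = 3$ for $\mathcal{T}$, the orbits of the braided action on $X \times X$ split into four diagonal orbits of size one and four off-diagonal orbits of size three, contributing $16$ in total to $\dim V \otimes V$. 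By Example~\ref{exa:qr}, each diagonal orbit contributes $1$ to $\ker(1+c)$ iff $\rho(x_1) = -1$, and each off-diagonal orbit contributes $1$ iff condition~\eqref{eq:rho} holds for $m=3$. Lemma~\ref{lem:rho(g)=-1} forces $\rho(x_1)=-1$. For the off-diagonal orbit $\orbit(1,2)$ I would take $h = x_3$ (since $3 \trid 1 = 2$); using $x_3 x_1 x_3^{-1} = x_2$ and the relations~\eqref{eq:GT_relations1}--\eqref{eq:GT_relations2}, the product in~\eqref{eq:rho} simplifies to $(x_2 x_1)^3 = x_1^4 \cdot x_4 x_2$, yielding the condition $\rho(x_4 x_2) = 1$. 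The other three off-diagonal orbits give the same condition by rack symmetry. Since $\rho(x_4 x_2)^2 = \rho(x_1)^4 = 1$, the remaining alternative $\rho(x_4 x_2) = -1$ eliminates all off-diagonal contributions and gives $\dim \ker(1+c) = 4 < 6$; hence only $\rho(x_4 x_2) = 1$ is compatible with the bound, and in that case $\dim \NA_2(V) = 8 \leq 10$.

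For parts (2) and (3), the plan is to construct an $\ndN$-graded Yetter-Drinfeld submodule $\mathcal{J} \subseteq T(V)$ that is an ideal and a coideal whose quotient has the claimed Hilbert series; then Theorem~\ref{thm:AG} identifies $T(V)/\mathcal{J}$ with $\NA(V)$. In degree two, $\mathcal{J}$ is determined by part (1). For part~(2) in characteristic different from two the algebra is known from \cite{MR1800714,MR1994219}; the strategy is to identify no new relations in degrees below nine, verify by a skew-derivation computation that the proposed integral $v_1 v_2 v_1 v_3 v_2 v_1 v_3 v_2 v_4$ is primitive and non-zero in $\NA_9(V)$, and verify that the Hilbert series is $(2)_t^2 (3)_t (6)_t$, giving total dimension $72$. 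For part~(3) in characteristic two, an additional relation (coming from a $2$-divisible combination that becomes non-trivial in characteristic two) collapses the top degree from nine to six, producing Hilbert series $(2)_t^2(3)_t^2$ and dimension $36$, with integral $v_1 v_2 v_1 v_3 v_2 v_4$. Theorem~\ref{thm:diff_char} applied to the characteristic-zero Nichols algebra with $p=2$ provides the upper bound $\dim_{\mathbb{F}_2}\NA_n(\tau_2 V) \leq \dim_{\ndQ} \NA_n(V_{\ndQ})$, reducing the task in characteristic two to exhibiting enough linearly independent elements in each low degree.

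The principal obstacle is verifying in both parts that the candidate algebra defined by $\mathcal{J}$ has exactly the predicted Hilbert series. In practice this requires either a Gr\"obner-basis style reduction in the free algebra quotient or a direct argument with the quantum symmetrizer, and the identification of the extra characteristic-two relation in part~(3) is genuinely new work; I would support the computation by computer algebra as in \cite{GAP,RIG} and rely on Theorems~\ref{thm:AG} and~\ref{thm:diff_char} to formalize it. By contrast, the check that the proposed integrals are nonzero and primitive --- a direct computation with the skew-derivations $\partial_{v_i}$ --- is the cleanest portion of the argument and supplies the top-degree dimension needed to invoke Theorem~\ref{thm:AG}.
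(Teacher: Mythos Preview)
Your overall strategy is the paper's strategy: part~(1) via the abelian centralizer, Lemma~\ref{lem:rho(g)=-1}, and the orbit criterion of Example~\ref{exa:qr}; parts~(2) and~(3) via a presentation whose Hilbert series is computed combinatorially and then identified with $\NA(V)$ through Theorem~\ref{thm:AG} after checking that the top monomial survives a chain of skew-derivations. Your choice of the orbit $\orbit(1,2)$ with $h=x_3$ in part~(1) differs from the paper's choice of $\orbit(x_1,x_4)$ with $h=x_2$, but since the diagonal $G_X$-action is transitive on the size-$3$ orbits this is immaterial.

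Two points deserve correction. First, in part~(2) you cite the known result only over $\ndC$; you still owe an argument for arbitrary $\fie$ with $\mathrm{char}\,\fie\ne 2$. The paper's route is Lemma~\ref{lem:same_char} for characteristic zero, then the non-vanishing of the degree-$9$ monomial together with Theorem~\ref{thm:diff_char}(2) to pass to $\mathbb{F}_p$ for $p>2$, and Lemma~\ref{lem:same_char} again for extensions. Your plan as written skips this transfer.

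Second, and more substantively, your appeal to Theorem~\ref{thm:diff_char} in part~(3) does not do what you claim. The inequality $\dim_{\mathbb{F}_2}\NA_n(\tau_2V)\le\dim_\ndQ\NA_n(V_\ndQ)$ gives degree-wise bounds summing to $72$, not $36$, so it cannot ``reduce the task to exhibiting enough linearly independent elements'' for the smaller Hilbert series $(2)_t^2(3)_t^2$. The hypothesis of Theorem~\ref{thm:diff_char}(2) fails here precisely because the characteristic-$2$ algebra is strictly smaller. The paper does not use Theorem~\ref{thm:diff_char} at $p=2$ at all; instead (Proposition~\ref{pro:Tchar2}) it writes down the extra cubic relation $cad+bac+dab=0$, computes a basis of the quotient $B$ by the diamond lemma to get $\dim B=36$ with $\dim B_6=1$, and then verifies $\partial_{v_2}\partial_{v_1}\partial_{v_4}\partial_{v_1}\partial_{v_2}\partial_{v_3}(v_1v_2v_1v_3v_2v_4)=1$ so that $\NA_6(V)\ne 0$, whence $B\simeq\NA(V)$ by Theorem~\ref{thm:AG}. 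Your earlier sentence about constructing $\mathcal{J}$ and applying Theorem~\ref{thm:AG} is the right plan; the later sentence about Theorem~\ref{thm:diff_char} should simply be deleted.
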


\begin{proof}
The representation $\rho$ is a linear character on $C_{G_X}(x_1)$, since $C_{G_X}(x_1)$ is abelian. 
Further, if $\dim\NA_2(V)\leq d_V(d_V+1)/2$ then $\rho(x_1)=-1$ by Lemma \ref{lem:rho(g)=-1}. 
We prove that if $\dim\NA_2(V)\leq d_V(d_V+1)/2$ then $\rho(x_4x_2)=1$. Let $g=x_1$ and $h=x_2$. 
Then $hgh^{-1}=x_4$. 
Since $\#\orbit(x_1,x_4)=3$, Example \ref{exa:qr} implies 
that $\dim\big(\ker(1+c)\cap V_{\orbit(x_1,x_4)}^{\otimes2}\big)=1$ if and only if 
\[
\rho\big((x_2x_1x_2^{-1}x_1x_2x_1)(x_2^{-1}x_1x_2x_1x_2^{-1}x_1)\big)=-\rho(x_1)^3.
\]
By Equations \eqref{eq:GT_relations1}--\eqref{eq:GT_relations2} this is equivalent to $\rho(x_4x_2)\rho(x_1)=-1$. 
The group $G_X$ acts transitively on the set of orbits $\orbit(x,y)$ with $x\ne y$ by diagonal action. Hence 
$\dim\big(\ker(1+c)\cap V_{\orbit(x,y)}^{\otimes2}\big)=1$ for $x\ne y$ if and only if $\rho(x_4x_2)\rho(x_1)=-1$. 
The inequality $\dim\NA_2(V)\leq d_V(d_V+1)/2$ is equivalent 
to $\dim\ker(1+c)\geq 6$. Since there are four orbits $\orbit(x,y)$ with $x=y$ and
four orbits with $x\ne y$, the inequality $\dim\NA_2(V)\leq d_V(d_V+1)/2$ implies that 
$\rho(x_1)=-1$ and $\rho(x_4x_2)=1$. 
The remaining implication in part $(1)$ follows from $(2)$ and $(3)$. 

Now we prove $(2)$. For $\fie=\ndC$ the result is known, see \cite[Theorem 6.15]{MR1994219}. For 
other fields $\fie$ of characteristic zero the result follows from Lemma \ref{lem:same_char}. 
Assume now that $\mathrm{char}\,\fie >2$.
By the proof of \cite[Theorem 6.15]{MR1994219} the monomial $v_1v_2v_1v_3v_2v_1v_3v_2v_4$ 
is non-zero in $\NA(V)$. Therefore the claim in $(2)$ 
holds by Theorem \ref{thm:diff_char} for $\fie=\mathbb{F}_p$ with $p>2$ and by Lemma \ref{lem:same_char} 
for arbitrary fields $\fie$ of characteristic $p>2$.

The claims in $(3)$ follow from Proposition \ref{pro:Tchar2}.
\end{proof}

\begin{prop}
\label{pro:Tchar2}
Assume that $\mathrm{char}\,\fie=2$. 
\begin{enumerate}
\item Let $B$ be the algebra 
given by generators $a,b,c,d$ and defining relations
\begin{gather}
a^{2}=b^{2}=c^{2}=d^{2}=0\label{eq:T_rel1},\\
ba+db+ad=ca+bc+ab=da+cd+ac=cb+dc+bd=0\label{eq:T_rel2},\\
cad+bac+dab=0\label{eq:T_rel4_NEW}.
\end{gather}
Then a basis of $B$ is given by
\begin{gather*}
1,a,b,c,d,ab,ac,ad,ba,bc,bd,cb,cd,\\
aba,abc,abd,acb,acd,bac,bad,bcb,bcd,cbd,\\
abac,abad,abcb,abcd,acbd,bacb,bacd,bcbd,\\
abac,baba,cdab,cbdb,acbd,abacbd.\end{gather*}
\item 
Let $\rho$ be an absolutely simple representation of $C_{G_X}(x_1)$ such that 
$\rho(x_1)=-1$ and $\rho(x_4x_2)=1$. Let $V=M(x_1,\rho)$ and let 
$v\in V_{x_1}\setminus\{0\}$. Define 
\[
v_1=v,\quad v_2=x_3v,\quad v_3=x_4v,\quad v_4=x_2v. 
\]
Then the linear map $V\to B$ given by $(v_1,v_2,v_3,v_4)\mapsto (a,b,c,d)$ extends uniquely 
to an algebra isomorphism $\NA(V)\to B$. 
\end{enumerate}
\end{prop}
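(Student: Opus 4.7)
The plan is to deduce part (1) from a diamond-lemma computation and then to match $\NA(V)$ with $B$ via Theorem~\ref{thm:AG}. For part (1), I would order the free monoid on $a<b<c<d$ by degree-lex, read each defining relation in \eqref{eq:T_rel1}--\eqref{eq:T_rel4_NEW} as a rewriting rule eliminating its leading monomial (the four squares; a chosen product from each trinomial in \eqref{eq:T_rel2}; and one of the three cubics in \eqref{eq:T_rel4_NEW}), and verify confluence by resolving the finitely many composition ambiguities (of length at most five). The surviving normal forms match the listed basis, their count in each degree reproduces the coefficients $1,4,8,10,8,4,1$ of $(2)_t^2(3)_t^2$, and so $\dim B=36$ with a one-dimensional top component in degree $6$.

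For part (2), the elements $v_1,\dots,v_4$ form a basis of $V$ with $v_i\in V_{x_i}$, as is verified from \eqref{eq:GT_relations1}--\eqref{eq:GT_relations2}. The relations $v_i^2=0$ hold in $\NA(V)$ because each singleton orbit $\orbit(x_i,x_i)$ satisfies $(1+c)(v_i\otimes v_i)=(1+\rho(x_i))v_i\otimes v_i=0$ in characteristic two, irrespective of the sign of $\rho(x_i)$. Each of the four size-$3$ orbits $\orbit(x_i,x_j)$ with $i\neq j$ supplies, by Example~\ref{exa:qr} and Lemma~\ref{lem:T_cen}, a one-dimensional kernel of $1+c$ spanned by $v_i\otimes v_j+v_j\otimes v_k+v_k\otimes v_i$ with $x_i\trid x_j=x_k$; translating the four orbits into the letters $a,b,c,d$ gives exactly \eqref{eq:T_rel2}. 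The cubic relation \eqref{eq:T_rel4_NEW} requires a direct mod-$2$ computation of the braided symmetrizer $S_3$ on the $G_X$-homogeneous component of $V^{\otimes 3}$ to which $v_c\otimes v_a\otimes v_d$, $v_b\otimes v_a\otimes v_c$, $v_d\otimes v_a\otimes v_b$ all belong (the three group products $x_cx_ax_d$, $x_bx_ax_c$, $x_dx_ax_b$ coincide in $G_X$ by \eqref{eq:GT_relations1}--\eqref{eq:GT_relations2}). This is the main obstacle: the sum of these three tensors lies in $\ker S_3$ over $\mathbb{F}_2$ but not over $\ndQ$, and it is precisely this coincidence that makes the Nichols algebra degenerate only in characteristic two.

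Let $\mathcal{J}\subset TV$ be the ideal generated by the quadratic and cubic relations above. It is a Yetter-Drinfeld submodule of $TV$ since its generators are permuted up to sign by $\fie G_X$ via the rack structure of $\mathcal{T}$. It is also a coideal: the quadratic generators lie in $\ker(1+c)$ and are hence primitive in the braided tensor bialgebra $TV$, while a direct computation of the coproduct of the cubic generator, followed by reduction by the quadratic relations already in $\mathcal{J}$, places the non-trivial summands into $\mathcal{J}\otimes TV+TV\otimes\mathcal{J}$. By part (1), $TV/\mathcal{J}\cong B$ has dimension $36$ with a one-dimensional top component in degree $6$, spanned by the monomial claimed to be the integral. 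To apply Theorem~\ref{thm:AG} with $m=6$ it remains to check $\dim\NA_6(V)\ge 1$; for this I would iterate the skew derivations $\partial_{v_4}\partial_{v_2}\partial_{v_3}\partial_{v_1}\partial_{v_2}\partial_{v_1}$ on $v_1v_2v_1v_3v_2v_4$ using the Leibniz rule recalled in Section~\ref{YD} and verify that the result is a nonzero scalar in $\fie$. Theorem~\ref{thm:AG} then identifies $TV/\mathcal{J}=B$ with $\NA(V)$, which is the isomorphism asserted in (2) and simultaneously confirms the Hilbert series and the integral listed in Proposition~\ref{pro:T_main}(3).
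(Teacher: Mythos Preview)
Your approach is correct and essentially the same as the paper's: diamond lemma (or Gr\"obner basis) for part~(1), and for part~(2) verify that the degree-six monomial survives in $\NA(V)$ via an iterated skew derivation, then invoke Theorem~\ref{thm:AG}. The paper is much terser---it omits the verification that the defining relations of $B$ hold in $\NA(V)$ and the check that $\mathcal{J}$ is a coideal, stating only the derivation calculation $\partial_{v_2}\partial_{v_1}\partial_{v_4}\partial_{v_1}\partial_{v_2}\partial_{v_3}(v_1v_2v_1v_3v_2v_4)=1$ (a different sequence from yours, but either works). Your explicit treatment of the coideal condition and of the cubic relation via $\ker S_3$ over $\mathbb{F}_2$ fills in exactly the steps the paper leaves to the reader.
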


\begin{proof}
Part $(1)$ can be obtained using the diamond lemma \cite{MR506890} or by calculating a 
non-commutative Gr\"obner basis \cite{GBNP, GAP}. 
To prove part $(2)$ it is sufficient to see that $v_1v_2v_1v_3v_2v_4$ 
does not vanish in $\mathfrak{B}(V)$. 
For this purpose one can show that 
\[
\partial_{v_2}\partial_{v_1}\partial_{v_4}\partial_{v_1}\partial_{v_2}\partial_{v_3}(v_1v_2v_1v_3v_2v_4)=1.
\]
This completes the proof. 
\end{proof}

\subsection{The rack $\mathcal{A}$}

Let $X=\mathcal{A}$. Recall that in $G_X$ the following relations hold.
\begin{align}
\label{eq:GA_relations1}
x_1x_2&=x_3x_1=x_2x_3,& x_1x_3&=x_2x_1=x_3x_2,& x_1x_4&=x_4x_1,\\
\label{eq:GA_relations2}
x_1x_5&=x_6x_1=x_5x_6,& x_1x_6&=x_5x_1=x_6x_5, & x_2x_6&=x_6x_2,\\
\label{eq:GA_relations3}
x_2x_4&=x_5x_2=x_4x_5,& x_2x_5&=x_4x_2=x_5x_4, & x_3x_5&=x_5x_3,\\
\label{eq:GA_relations4}
x_3x_4&=x_6x_3=x_4x_6,& x_3x_6&=x_4x_3=x_6x_4.
\end{align}


\begin{lem}
\label{lem:A_cen}
The centralizer of $x_1$ in $G_X$ is the abelian group generated by $x_1$ and $x_4$.
These generators satisfy $x_1^2=x_4^2$.
\end{lem}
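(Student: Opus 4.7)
The plan is to mimic the proof of Lemma \ref{lem:T_cen}. From relation~(\ref{eq:GA_relations1}) one reads off $x_1 x_4 = x_4 x_1$, so $C := \langle x_1, x_4\rangle$ is an abelian subgroup of $C_{G_X}(x_1)$. Since every $\varphi_i$ is an involution, Lemma~\ref{lem:powers} (applied with $n=2$) gives $x_1^2 = x_j^2$ for all $j\in X$, and in particular $x_1^2 = x_4^2$, which is the relation asserted in the statement. This also shows that no new element is introduced into $C$ by the central squares.

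To finish I need $C = C_{G_X}(x_1)$, for which it suffices to show $[G_X : C] \leq 6$, because the orbit--stabilizer identity $[G_X : C_{G_X}(x_1)] = \#X = 6$ combined with $C \subseteq C_{G_X}(x_1)$ will then force equality. I would exhibit six left coset representatives, for instance $1, x_2, x_3, x_5, x_6, x_2 x_5$, and verify that left multiplication by each generator $x_i$ permutes the corresponding six cosets. Lemmas~\ref{lem:central} and~\ref{lem:powers} let one push every square $x_i^2$ into $C$, and then the conjugation relations~(\ref{eq:GA_relations1})--(\ref{eq:GA_relations4}) reduce any positive word in the $x_i$ to one of the chosen normal forms. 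A smoother variant is to pass to $\overline{G_X} = G_X/\langle x_1^2 \rangle$: by Lemma~\ref{lem:powers} all $\pi x_i$ become involutions, and one checks that the assignment $\pi x_i \mapsto \pi_i$ respects~(\ref{eq:GA_relations1})--(\ref{eq:GA_relations4}) and so defines a surjection $\overline{G_X} \twoheadrightarrow \mathbb{S}_4$. The same coset-counting bound $|\overline{G_X}| \leq 24$ then upgrades this to an isomorphism $\overline{G_X} \simeq \mathbb{S}_4$, under which $C_{\overline{G_X}}(\pi x_1) = \{\mathrm{id}, \pi_1, \pi_4, \pi_1\pi_4\} = \langle \pi x_1, \pi x_4 \rangle$ is the Klein four-group; Lemma~\ref{lem:centralizer}\eqref{lem:pi} then lifts the conclusion to $C_{G_X}(x_1) = \pi^{-1}\langle \pi x_1, \pi x_4\rangle = \langle x_1, x_4, x_1^2\rangle = C$.

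The main obstacle in either route is the index bound $[G_X : C] \leq 6$ (equivalently $|\overline{G_X}| \leq 24$): because $\mathcal{A}$ has six generators and considerably more relations than $\mathcal{T}$, the systematic rewriting of positive words into the six normal forms requires a more involved case analysis than in the proof of Lemma~\ref{lem:T_cen}, though it is purely mechanical and follows the same pattern.
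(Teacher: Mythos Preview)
Your overall strategy matches the paper's proof exactly: show $C=\langle x_1,x_4\rangle$ is abelian and contained in the centralizer, then establish $[G_X:C]\le 6$ via an explicit list of coset representatives, and conclude by orbit--stabilizer since $\#X=6$. The relation $x_1^2=x_4^2$ via Lemma~\ref{lem:powers} is also how the paper argues. Your alternative route through $\overline{G_X}\simeq\mathbb{S}_4$ and Lemma~\ref{lem:centralizer}\eqref{lem:pi} is a legitimate variant (parallel to the proof of Lemma~\ref{lem:D3_cen}), though as you note it still rests on the same index bound.

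There is one concrete slip: your proposed list of representatives $1,x_2,x_3,x_5,x_6,x_2x_5$ does not give six distinct cosets. Relation~\eqref{eq:GA_relations3} says $x_2x_5=x_5x_4$, and since $x_4\in C$ this forces $x_2x_5\,C=x_5\,C$, so your list collapses to five cosets. The paper uses $\{C,\,x_2C,\,x_3C,\,x_5C,\,x_6C,\,x_5x_3\,C\}$ instead; note that $x_5x_3=x_3x_5$ by~\eqref{eq:GA_relations3}, and this element genuinely lies in a new coset (its image in $\mathbb{S}_4$ is $(1\,3)(2\,4)$, which is not a transposition). With that correction your argument goes through.
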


\begin{proof}
By \eqref{eq:GA_relations1} we obtain that $\langle x_1,x_4\rangle$ is an abelian subgroup of $C_{G_X}(x_1)$.
Equations \eqref{eq:GA_relations1}--\eqref{eq:GA_relations4} imply that $[G_X:\langle x_1,x_4\rangle]\leq 6$. 
Indeed, let $C=\langle x_1,x_4\rangle$. Then $x_2x_6x_1=x_3x_5x_4$. This and similar calculations yield that 
\[
G_X/C=\{C,x_3C, x_2C, x_5x_3C, x_6C, x_5C\}.
\]
Since $\#\mathcal{O}_{x_1}=6$, it follows that $C_{G_X}(x_1)=\langle x_1,x_4\rangle$. 
Equation $x_1^2=x_4^2$ is obtained from Lemma \ref{lem:powers}. 
\end{proof}

\begin{prop}
\label{pro:A_main}
Let $\rho$ be an absolutely simple representation of $C_{G_X}(x_1)$. Let $V=M(x_1,\rho)$ 
and $d_V=\dim V$.
\begin{enumerate}
\item The representation $\rho$ is a linear character on $C_{G_X}(x_1)$, hence $d_V=6$. 
Moreover, $\dim\NA_2(V)\leq d_V(d_V+1)/2$ if and only if $\rho(x_1)=-1$. In this case $\rho(x_4)\in\{-1,1\}$. 
\item Assume that $\rho(x_1)=-1$. Then the following hold.
\begin{enumerate}
\item $\mathcal{H}_{\NA(V)}(t)=(2)^2_t (3)^2_t (4)^2_t$ and $\dim\NA(V)=576$.
\item Let $v_i\in V_{x_i}$ with $i\in X$ be non-zero elements. Then the monomial 
$v_1v_2v_1v_3v_4v_2v_1v_3v_4v_5v_1v_6$ is an integral of $\NA(V)$.
\end{enumerate}
\end{enumerate}
\end{prop}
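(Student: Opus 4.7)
The plan is to split the argument into the three claims (1), (2a), and (2b), treating (2a) and (2b) in tandem since the positive-characteristic part of (2a) depends on (2b).

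For part (1), I would first invoke Lemma \ref{lem:A_cen} to note that $C_{G_X}(x_1) = \langle x_1, x_4\rangle $ is abelian, so any absolutely simple $\rho $ is a linear character and $d_V = [G_X : C_{G_X}(x_1)] = 6$. The relation $x_1^2 = x_4^2$ then forces $\rho (x_4)^2 = \rho (x_1)^2$, so $\rho (x_1) = -1$ forces $\rho (x_4) \in \{\pm 1\}$ automatically. The direction $\dim \NA_2(V) \le d_V(d_V+1)/2 \Rightarrow \rho (x_1) = -1$ follows from Lemma \ref{lem:rho(g)=-1}. For the converse, assuming $\rho (x_1) = -1$, I would decompose $V \otimes V$ along the 17 $H$-orbits on $X \times X$ (six diagonal orbits, and by Lemma \ref{lem:l_n}, $l_2 = 3$ of size two and $l_3 = 8$ of size three) and show via Lemmas \ref{lem:kernel}, \ref{lem:bound1}, \ref{lem:bound2} together with Example \ref{exa:qr} that each piece $\ker (1+c) \cap V_{\orbit }^{\otimes 2}$ attains its maximal dimension $1$. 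For diagonal orbits this is automatic because $\rho $ evaluates to $-1$ on every $x_i \in X$, since $V = M(x_1, \rho )$ and $X$ is a single conjugacy class. For non-diagonal orbits it suffices by Remark \ref{rem:orbits} to check one representative $\orbit (x_1, y)$ per $G_X$-orbit, that is, one choice for each $y \in X \setminus \{x_1\}$, giving five orbit checks (one of size two and four of size three, matching $k_2 = 1$ and $k_3 = 4$). For each, I would simplify the product appearing in the criterion \eqref{eq:rho} using the relations \eqref{eq:GA_relations1}--\eqref{eq:GA_relations4} to an element of $\langle x_1, x_4 \rangle $, on which $\rho $ takes the required sign regardless of the choice $\rho (x_4) = \pm 1$. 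This yields $\dim \ker (1+c) = 17$, hence $\dim \NA_2(V) = 36 - 17 = 19 \le 21$.

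For part (2a), the plan mirrors the proof of Proposition \ref{pro:T_main}(2) exactly. Over $\fie = \ndC $ the Hilbert series $(2)^2_t (3)^2_t (4)^2_t$ and the dimension $576$ are established in \cite[Theorem 6.15]{MR1994219}. Lemma \ref{lem:same_char} extends the result to every field of characteristic zero. For $\fie $ of positive characteristic $p$, Theorem \ref{thm:diff_char} applies as soon as $\dim_{\mathbb{F}_p} \NA_{12}(\tau_p V) \ge 1$, which is furnished by part (2b). A further application of Lemma \ref{lem:same_char} then propagates the Hilbert series to all fields of characteristic $p$.

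For part (2b), the plan is to exhibit an iterated skew-derivation $D = \partial _{v_{j_{12}}} \cdots \partial _{v_{j_1}}$ with $D(v_1 v_2 v_1 v_3 v_4 v_2 v_1 v_3 v_4 v_5 v_1 v_6) = \pm 1$, computed over $\ndZ $ via the Leibniz rule so that the conclusion descends to every characteristic. A natural choice is to strip letters from the right, selecting each $j_i$ so that $\partial _{v_{j_i}}$ acts non-trivially on exactly one surviving occurrence at each stage. The main obstacle will be twofold: first, the orbit-by-orbit verification in part (1), where one must confirm independence of the value $\rho (x_4) \in \{\pm 1\}$ for each of the five non-diagonal orbit classes; and second, the combinatorial bookkeeping in part (2b), where the degree-12 monomial produces many intermediate summands under successive skew-derivations, so that only a carefully chosen evaluation sequence will keep the computation tractable. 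The Hilbert series itself requires no new presentation of the algebra, since it is inherited from \cite{MR1994219}.
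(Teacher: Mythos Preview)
Your approach is essentially the same as the paper's, with two small corrections worth noting.

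First, the citation in (2a) is wrong: \cite[Theorem~6.15]{MR1994219} treats the rack $\mathcal{T}$, not $\mathcal{A}$. The paper instead appeals to \cite[Example~6.4]{MR1800714} for $\rho(x_4)=-1$ and \cite[Definition~2.1]{MR1667680} for $\rho(x_4)=1$; these are the references that establish the Hilbert series over $\ndC$.

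Second, your orbit bookkeeping in (1) does more work than needed. The paper observes that the diagonal $G_X$-action on the set of $H$-orbits of each fixed size ($1$, $2$, or $3$) is transitive, so only \emph{one} non-diagonal orbit of each size needs to be checked, not all five $\orbit(x_1,y)$. Your five checks are of course sufficient, just redundant. Note also that ``one representative per $G_X$-orbit'' and ``one for each $y\in X\setminus\{x_1\}$'' are not the same thing: the four size-$3$ orbits $\orbit(x_1,y)$ already lie in a single $G_X$-orbit.

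For (2b) the paper does not rely on a stripping heuristic but simply records the explicit sequence
\[
\partial_{v_4}\partial_{v_2}\partial_{v_4}\partial_{v_1}\partial_{v_2}\partial_{v_4}\partial_{v_3}\partial_{v_4}\partial_{v_2}\partial_{v_5}\partial_{v_6}\partial_{v_5}
\]
and states that its value on the monomial is $-1$; since this is an integer computation it descends to all characteristics, exactly as you outline.
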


\begin{proof}
We first prove $(1)$. Since $C_{G_X}(x_1)$ is abelian by Lemma \ref{lem:A_cen}, we conclude that 
$\rho$ is a linear character. 
Lemma \ref{lem:rho(g)=-1} implies that $\rho(x_1)=-1$ and $\rho(x_4)\in\{-1,1\}$. 
The diagonal action of $G_X$ on the set of 
$H$-orbits of size $1$, $2$ and $3$, respectively, is transitive. 
Hence there is a quadratic relation of $\NA(V)$ corresponding to an $H$-orbit of a given size 
if and only if 
there is a quadratic relation of $\NA(V)$ for each $H$-orbit of this size. 
By Example \ref{exa:qr} the latter holds for all $H$-orbits.
The cases $\rho(x_4)=-1$ and $\rho(x_4)=1$, respectively, were discussed 
in \cite[Example 6.4]{MR1800714} and in \cite[Definition 2.1]{MR1667680}, respectively. In these  
papers also $\mathcal{H}_{\NA(V)}(t)$ was calculated. 
In both cases one can check that
\[
\partial_{v_4}\partial_{v_2}\partial_{v_4}\partial_{v_1}\partial_{v_2}\partial_{v_4}\partial_{v_3}\partial_{v_4}\partial_{v_2}\partial_{v_5}\partial_{v_6}\partial_{v_5}(v_1v_2v_1v_3v_4v_2v_1v_3v_4v_5v_1v_6)=-1.
\]
This completes the proof.
\end{proof}

\subsection{The rack $\mathcal{B}$}

Let $X=\mathcal{B}$. Then 
\begin{gather}
x_1^4=x_2^4=x_3^4=x_4^4=x_5^4=x_6^4,\\
x_3x_5=x_2x_4=x_1x_6
\end{gather}
in $G_X$.

\begin{lem}
The centralizer of $x_1$ in $G_X$ is the abelian subgroup generated by $x_1$
and $x_6$. These elements satisfy $x_1^4=x_6^4$.
\end{lem}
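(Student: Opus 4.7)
The plan is to follow the template established by Lemmas~\ref{lem:T_cen} and~\ref{lem:A_cen}. First I would check that $x_1$ and $x_6$ commute in $G_X$: since $\varphi_1=(2\,3\,4\,5)$ fixes $6$, we have $1\trid 6=6$ in the rack, and the defining relation $x_1 x_6 x_1^{-1}=x_{1\trid 6}$ in $G_X$ yields $x_1 x_6=x_6 x_1$. Hence $C:=\langle x_1,x_6\rangle$ is an abelian subgroup of $C_{G_X}(x_1)$. The equality $x_1^4=x_6^4$ is then immediate from Lemma~\ref{lem:powers}, since $\mathcal{B}$ is indecomposable and each permutation $\varphi_i$ is a $4$-cycle.

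Next I would prove the upper bound $[G_X:C]\leq 6$ by exhibiting six left coset representatives, in analogy with the list produced in the proof of Lemma~\ref{lem:A_cen}. Natural candidates are $C,\ x_2 C,\ x_3 C,\ x_4 C,\ x_5 C$ together with a sixth coset chosen after inspecting which products $x_i x_j$ fail to reduce to any of the first five. The verification relies on the three ``diagonal'' relations $x_1 x_6=x_2 x_4=x_3 x_5$ emphasised in the preamble to the lemma, all of which lie in $C$, together with the braiding relations $x_i x_j=x_{i\trid j}x_i$. For instance $2\trid 3=1$ gives $x_2 x_3=x_1 x_2$, so $x_2 x_3 C=x_2 C$; $3\trid 2=6$ gives $x_3 x_2=x_6 x_3$, so $x_3 x_2 C=x_3 C$; and continuing in this fashion every product $x_i x_j C$ collapses onto one of the chosen six cosets.

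For the matching lower bound, since $\mathcal{B}$ is realised as a conjugacy class in $\mathbb{S}_4$ generating $\mathbb{S}_4$, the observation of Subsection~\ref{ss:braided} (``Since $G$ is generated by $X$, there is a unique group epimorphism $G_X\to G$\dots\ Hence $X$ is an injective rack'') makes $\mathcal{B}$ injective. Consequently the $G_X$-conjugation orbit of $x_1$ coincides with $\{x_1,\dots,x_6\}$ and has exactly $d=6$ elements, so orbit--stabilizer yields $[G_X:C_{G_X}(x_1)]=6$. Combining this with $C\subseteq C_{G_X}(x_1)$ and the upper bound $[G_X:C]\leq 6$ forces $C=C_{G_X}(x_1)$, completing the proof.

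The principal obstacle is the coset enumeration in the second step: one must identify the correct sixth representative and then systematically verify that left multiplication by each generator $x_i$ permutes the six chosen cosets. What keeps the bookkeeping tractable is precisely the triple equality $x_1 x_6=x_2 x_4=x_3 x_5$, which at one stroke trivialises many of the reductions. Everything else is a finite, mechanical check of the type carried out in the proofs of Lemmas~\ref{lem:T_cen} and~\ref{lem:A_cen}.
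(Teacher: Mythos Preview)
Your proposal is correct and follows essentially the same approach as the paper, which in fact gives no details beyond ``The proof is similar to the proof of Lemma~\ref{lem:A_cen}.'' Your outline---commutativity of $x_1,x_6$ from $1\trid 6=6$, the relation $x_1^4=x_6^4$ from Lemma~\ref{lem:powers}, a coset enumeration exploiting $x_1x_6=x_2x_4=x_3x_5$ to bound $[G_X:C]\le 6$, and orbit--stabilizer for the matching lower bound---is exactly the template of Lemma~\ref{lem:A_cen} specialised to $\mathcal{B}$.
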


\begin{proof}
The proof is similar to the proof of Lemma \ref{lem:A_cen}.
\end{proof}

\begin{prop}
\label{pro:B_main}
Let $\rho$ be an absolutely simple representation of $C_{G_X}(x_1)$. Let $V=M(x_1,\rho)$ 
and $d_V=\dim V$.
\begin{enumerate}
\item The representation $\rho$ is a linear character on $C_{G_X}(x_1)$, hence $d_V=6$. 
Moreover, $\dim\NA_2(V)\leq d_V(d_V+1)/2$ if and only if $\rho(x_1)=\rho(x_6)=-1$. 
\item Assume that $\rho(x_1)=-1$ and $\rho(x_6)=-1$. Then the following hold.
\begin{enumerate}
\item $\mathcal{H}_{\NA(V)}(t)=(2)^2_t (3)^2_t (4)^2_t$ and $\dim\NA(V)=576$.
\item Let $v_i\in V_{x_i}$ with $i\in X$ be non-zero elements. Then the monomial 
$v_1v_2v_1v_3v_2v_1v_4v_3v_2v_5v_4v_6$ is an integral of $\NA(V)$.
\end{enumerate}
\end{enumerate}
\end{prop}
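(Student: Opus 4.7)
The plan is to mirror closely the argument of Proposition \ref{pro:A_main}. First I would invoke the preceding lemma to observe that $C_{G_X}(x_1)=\langle x_1,x_6\rangle$ is abelian, so any absolutely simple representation $\rho$ is a linear character and $d_V=\#X=6$.

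For the forward direction of (1), I would assume $\dim\NA_2(V)\le 21$, equivalently $\dim\ker(1+c)\ge 15$. Lemma~\ref{lem:rho(g)=-1} then forces $\rho(x_1)=-1$. Since $\mathcal{B}$ has $k_2=1$ and $k_3=4$, Lemma~\ref{lem:l_n} produces $3$ $H$-orbits of size $2$ and $8$ of size $3$ in $X\times X$; Proposition~\ref{prop:number} bounds each contribution to the kernel by $1$, and by $G_X$-transitivity on the $H$-orbits of a fixed size the existence of a quadratic relation is uniform per size. I would then test the size-$3$ orbit $\orbit(x_1,x_2)$ with conjugating element $h=x_3$: Example~\ref{exa:qr} reduces the existence of the quadratic relation to $\rho((x_2x_1)^3)=-\rho(x_1)^3$. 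A short manipulation inside $G_X$, using $x_ix_j=x_{i\trid j}x_i$ together with the identity $x_3x_5=x_1x_6$ supplied before the lemma, yields $(x_2x_1)^3=x_1^5x_6$, so the condition becomes $\rho(x_6)=-1$ once $\rho(x_1)=-1$. If this failed, at most $6+3=9$ quadratic relations would be available, giving $\dim\ker(1+c)\le 9$ and contradicting the assumption; hence $\rho(x_6)=-1$. The backward direction of (1) is immediate from (2)(a), since the claimed Hilbert series gives $\dim\NA_2(V)=19\le 21$.

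For (2)(a) my plan is either to invoke a prior computation of the Nichols algebra attached to the class of $4$-cycles in $\mathbb{S}_4$ (see \cite{MR1800714}) or, failing that, to present the explicit quadratic relations coming out of Example~\ref{exa:qr} for each of the $17$ $H$-orbits and then verify by a diamond-lemma / Gr\"obner-basis computation (executable with \textsf{GAP/GBNP}, cf.\ Proposition~\ref{pro:Tchar2}) that the resulting algebra has Hilbert series $(2)_t^2(3)_t^2(4)_t^2$ and top degree $12$. Theorem~\ref{thm:AG} then identifies this algebra with $\NA(V)$. For (2)(b) I observe that the proposed integral has degree $12$, matching the top degree of the Hilbert series; hence by Theorem~\ref{thm:AG} it suffices to verify that $v_1v_2v_1v_3v_2v_1v_4v_3v_2v_5v_4v_6$ is non-zero in $\NA(V)$, which I would do by applying a suitable iterated skew-derivation $\partial_{v_{j_1}}\cdots\partial_{v_{j_{12}}}$ and checking that it outputs a non-zero scalar, exactly as in the final step of the proof of Proposition~\ref{pro:A_main}.

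The step I expect to be the main obstacle is the Hilbert series computation in (2)(a): controlling the full ideal of relations, or equivalently verifying that the quotient by the quadratic ideal has no unexpected higher-degree relations, is a genuinely non-trivial non-commutative algebra calculation. The remaining ingredients — the $G_X$-identity $(x_2x_1)^3=x_1^5x_6$, the transitivity arguments on $H$-orbits, and the skew-derivation check for the integral — are routine bookkeeping.
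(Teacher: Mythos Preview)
Your plan matches the paper's proof almost exactly: the paper also says ``analogous to Proposition~\ref{pro:A_main}'', invokes the abelian centralizer to get a linear character, uses Lemma~\ref{lem:rho(g)=-1} and an Example~\ref{exa:qr} computation on a size-$3$ orbit to pin down the second value, and then verifies the integral by an explicit iterated skew-derivation (the paper records $\partial_{v_2}\partial_{v_1}\partial_{v_4}\partial_{v_3}\partial_{v_4}\partial_{v_6}\partial_{v_2}\partial_{v_6}\partial_{v_4}\partial_{v_3}\partial_{v_4}\partial_{v_5}$ applied to the monomial gives $-1$).

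Two small discrepancies are worth flagging. First, the Hilbert-series computation for (2)(a) is cited from \cite[Theorem~6.12]{MR1994219} (Andruskiewitsch--Gra\~na), not \cite{MR1800714}; your fallback Gr\"obner/diamond route is fine in principle but the literature already has the result. Second, you do not say how to pass from $\fie=\ndC$ (or $\ndQ$) to arbitrary fields. The paper handles this by ``for other fields see the proof of Proposition~\ref{pro:T_main}'', i.e.\ Lemma~\ref{lem:same_char} for field extensions and Theorem~\ref{thm:diff_char} for positive characteristic, using that the explicit skew-derivation evaluates to $-1$ so the top-degree element survives reduction mod $p$. You should add that step; otherwise (2) is only established over one field.
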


\begin{proof}
Analogous to the proof of Proposition \ref{pro:A_main} for $\fie=\ndC$.  The
Nichols algebra in part $(2)$ was studied in \cite[Theorem 6.12]{MR1994219}.
The existence of an integral of degree $12$ follows from the formula
\[
\partial_{v_2}\partial_{v_1}\partial_{v_4}\partial_{v_3}
\partial_{v_4}\partial_{v_6}\partial_{v_2}\partial_{v_6}
\partial_{v_4}\partial_{v_3}\partial_{v_4}\partial_{v_5}
(v_1v_2v_1v_3v_2v_1v_4v_3v_2v_5v_4v_6)=-1.
\]
For other fields see the proof of Proposition \ref{pro:T_main}. 
\end{proof}

\subsection{The rack $\mathcal{C}$}

Let $X=\mathcal{C}$. 

\begin{lem}
\label{lem:C_cen}
The centralizer of $x_1$ in $G_X$ is the non-abelian group 
$\langle x_1\rangle\times\langle x_8,x_9\rangle$. These generators
satisfy $x_1^2=x_8^2=x_9^2$ and $x_8x_9x_8=x_9x_8x_9$.
\end{lem}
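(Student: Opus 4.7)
My plan follows the template of Lemmas \ref{lem:T_cen} and \ref{lem:A_cen}. Set $C=\langle x_1,x_8,x_9\rangle$. I will first verify the asserted relations and the inclusion $C\subseteq C_{G_X}(x_1)$, and then show $[G_X:C]\leq 10$. Combined with $C\subseteq C_{G_X}(x_1)$ and $\#\mathcal{O}_{x_1}=10=[G_X:C_{G_X}(x_1)]$, this forces $C=C_{G_X}(x_1)$.

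The commutation relations $x_1x_8=x_8x_1$ and $x_1x_9=x_9x_1$ reflect $\pi_1\trid\pi_8=\pi_8$ and $\pi_1\trid\pi_9=\pi_9$, which hold because $(1\,2)$ commutes in $\mathbb{S}_5$ with $(3\,4)$ and $(3\,5)$; in particular $C\subseteq C_{G_X}(x_1)$. The equalities $x_1^2=x_8^2=x_9^2$ are instances of Lemma \ref{lem:powers}, as $\mathcal{C}$ is involutive and indecomposable of common order $2$. For the braid relation, both $\pi_8\trid\pi_9$ and $\pi_9\trid\pi_8$ equal $\pi_{10}=(4\,5)$, so $x_8x_9x_8^{-1}=x_{10}=x_9x_8x_9^{-1}$ in $G_X$; multiplying on the right by $x_8^2=x_9^2$ then yields $x_8x_9x_8=x_9x_8x_9$.

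For the index bound I use the canonical surjection $p:G_X\to\mathbb{S}_5$, $x_i\mapsto\pi_i$, which is well-defined because $\mathcal{C}$ is a conjugacy class in $\mathbb{S}_5$ and the defining relations of $G_X$ are precisely the conjugation relations satisfied by the $\pi_i$. Since $\mathcal{C}$ is a conjugacy class in $\mathbb{S}_5$, the rack is injective, and the conjugacy class $\mathcal{O}_{x_1}=\{x_1,\ldots,x_{10}\}$ in $G_X$ is mapped bijectively to the set of transpositions. By Remark \ref{rem:centralizer} this yields $C_{G_X}(x_1)=p^{-1}(C_{\mathbb{S}_5}(\pi_1))$ and hence $[G_X:C_{G_X}(x_1)]=[\mathbb{S}_5:C_{\mathbb{S}_5}(\pi_1)]=10$. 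Moreover $p(C)=\langle\pi_1,\pi_8,\pi_9\rangle=C_{\mathbb{S}_5}(\pi_1)$ (of order $12$), so everything reduces to the inequality $[G_X:C]\leq 10$.

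To establish this inequality I would exhibit ten coset representatives $g_1,\ldots ,g_{10}$ of $C$ in $G_X$, lifted from any choice of ten representatives of $\mathbb{S}_5/C_{\mathbb{S}_5}(\pi_1)$, and use the defining relations $x_ix_j=x_{i\trid j}x_i$ together with $x_i^2=x_1^2\in C$ to rewrite every word in the generators in the form $g_kc$ with $c\in C$, in the spirit of the coset enumeration at the end of the proof of Lemma \ref{lem:A_cen}. The main obstacle is the bookkeeping: $\mathcal{C}$ has ten generators and a much richer conjugation action than $\mathcal{A}$ or $\mathcal{B}$, so verifying that left multiplication by each $x_i$ permutes the ten proposed cosets is laborious. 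The verification can be carried out by hand from the explicit multiplication table of $\mathcal{C}$, or with computer algebra as used elsewhere in the paper. Once it is done, $[G_X:C]=10=[G_X:C_{G_X}(x_1)]$ together with $C\subseteq C_{G_X}(x_1)$ forces $C=C_{G_X}(x_1)$, completing the proof.
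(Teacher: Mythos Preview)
Your approach is correct and matches the paper's, whose proof reads in full ``The proof is similar to the proof of Lemma~\ref{lem:A_cen}.'' You have spelled out precisely that template---verifying $C=\langle x_1,x_8,x_9\rangle\subseteq C_{G_X}(x_1)$, establishing the stated relations via Lemma~\ref{lem:powers} and the rack structure, and reducing to a coset enumeration showing $[G_X:C]\le 10$; the non-abelianness (which you do not state explicitly) follows at once since $p(C)=C_{\mathbb{S}_5}(\pi_1)\cong\ndZ/2\times\mathbb{S}_3$ is non-abelian.
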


\begin{proof}
The proof is similar to the proof of Lemma \ref{lem:A_cen}.
\end{proof}

\begin{prop}
\label{pro:C_main}
Let $\rho$ be an absolutely simple representation of $C_{G_X}(x_1)$. Let $V=M(x_1,\rho)$ 
and $d_V=\dim V$.
\begin{enumerate}
\item We have $\dim\NA_2(V)\leq d_V(d_V+1)/2$ if and only if $\rho$ is a linear character such that 
$\rho(x_1)=-1$ and $\rho(x_8)=\rho(x_9)=\pm1$. 
\item Assume that $\rho$ is a linear character such that 
$\rho(x_1)=-1$ and $\rho(x_8)=\rho(x_9)=\pm1$. Then the following hold.
\begin{enumerate}
\item $\mathcal{H}_{\NA(V)}(t)=(4)^4_t (5)^2_t (6)^4_t$ and $\dim\NA(V)=8294400$.
\item Let $v_i\in V_{x_i}$ with $i\in X$ be non-zero elements. Then the monomial 
\begin{equation}
\label{eq:integralC}
\begin{aligned}
&v_1v_2v_1v_3v_4v_1v_2v_1v_4v_5v_3v_6v_1v_2v_1v_4v_1v_2v_1v_4\times\\
&\quad\quad v_6v_1v_2v_1v_5v_3v_6v_2v_4v_2v_6v_7v_3v_5v_3v_7v_8v_9v_8v_{10}
\end{aligned}
\end{equation}
is an integral of $\NA(V)$.
\end{enumerate}
\end{enumerate}
\end{prop}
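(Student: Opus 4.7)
The plan parallels the arguments for Propositions \ref{pro:A_main} and \ref{pro:B_main}, with the twist that $C_{G_X}(x_1)$ is nonabelian for $\mathcal{C}$ (Lemma \ref{lem:C_cen}), so $\rho$ is only \emph{a priori} an irreducible representation. I first argue that $\rho$ must be one-dimensional: from Table \ref{tab:racks} one has $\mathcal{S} = 1$ for $\mathcal{C}$, so if $e = \dim V_{x_1} \geq 2$ then Corollary \ref{cor:YD_condition} applied to the hypothesis $\dim\NA_2(V) \leq d_V(d_V+1)/2$ gives $1 \leq 1/e$, a contradiction. Thus $e = 1$, and Lemma \ref{lem:rho(g)=-1} forces $\rho(x_1) = -1$. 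The relations $x_8^2 = x_9^2 = x_1^2$ of Lemma \ref{lem:C_cen} give $\rho(x_8)^2 = \rho(x_9)^2 = 1$, and the braid relation $x_8 x_9 x_8 = x_9 x_8 x_9$, evaluated on a linear character, collapses to $\rho(x_8) = \rho(x_9)$. Conversely, for either sign choice I must verify that $\dim \NA_2(V) \leq 55$ is reached; using diagonal $G_X$-transitivity on $H$-orbits of a given size, it suffices to check the criterion of Example \ref{exa:qr} on one representative orbit of each of the three sizes $1,2,3$, in each case confirming that the relevant product $\mathrm{Prod}(hg,h^{-1}g;m)\mathrm{Prod}(h^{-1}g,hg;m) \in C_{G_X}(x_1)$ has $\rho$-value equal to the required sign independently of the choice $\rho(x_8) = \rho(x_9) \in \{\pm 1\}$.

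For part (2), when $\fie = \ndC$ the Hilbert series $(4)^4_t(5)^2_t(6)^4_t$ was determined in \cite{MR1667680} in the case $\rho(x_8) = \rho(x_9) = 1$ (the Fomin--Kirillov algebra $\mathcal{E}_5$) and in \cite{MR1994219} for $\rho(x_8) = \rho(x_9) = -1$; Lemma \ref{lem:same_char} extends both computations to every field of characteristic zero. For positive characteristic $p$, I apply Theorem \ref{thm:diff_char}(2) with $m = 40$, the top degree of $(4)^4_t(5)^2_t(6)^4_t$; it then suffices to show that the monomial \eqref{eq:integralC} does not vanish in $\NA(\tau_p V)$, which propagates the full Hilbert series and automatically certifies \eqref{eq:integralC} as an integral since it occupies the one-dimensional top-degree component.

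The main obstacle is the explicit verification that \eqref{eq:integralC} is non-zero in $\NA(V)$, amounting to exhibiting a sequence of $40$ skew-derivations collapsing it to a nonzero scalar in $\fie$. Its length makes by-hand treatment impractical, but the computation is routinely carried out in \textsf{GAP} using the package \textsf{RIG} \cite{RIG}; one builds the sequence by reading \eqref{eq:integralC} from the outside in, at each step applying $\partial_{v_i}$ so that $x_i$ matches the coaction weight of the current intermediate result, and iterating until only a scalar remains.
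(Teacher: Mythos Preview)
Your approach is essentially the paper's: the reduction to $e=1$ via $\mathcal{S}=1$ and Corollary~\ref{cor:YD_condition}, then Lemma~\ref{lem:rho(g)=-1} and the relations of Lemma~\ref{lem:C_cen}, then the citation-plus-Theorem~\ref{thm:diff_char} strategy for part~(2), all match the paper line for line (the paper phrases the $e=1$ step as an appeal to Theorem~\ref{thm:YD}, and obtains the converse in~(1) from part~(2) rather than by a direct Example~\ref{exa:qr} check, but these are cosmetic). Two small corrections are worth making. First, the reference for the case $\rho(x_8)=-1$ is \cite{zoo} (with a proof in \cite[Theorem~2.4]{GG2009}), not \cite{MR1994219}. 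Second, and more substantively, your description of the derivation calculation---``reading \eqref{eq:integralC} from the outside in, at each step matching the coaction weight''---does not reflect how the skew-derivations $\partial_{v_i}$ actually behave: because $\partial_f(xy)=x\partial_f(y)+\partial_f(x)(gy)$, applying $\partial_{v_{i_n}}$ to a monomial does not simply peel off the last letter, and indeed the paper's explicit $40$-term sequence (beginning from the right with $\partial_{v_9}$, not $\partial_{v_{10}}$) bears no resemblance to the monomial in reverse; the paper records that this particular sequence evaluates to $\rho(x_8)\in\{\pm1\}$, which is exactly what guarantees nonvanishing uniformly over every~$\mathbb{F}_p$ and hence makes the appeal to Theorem~\ref{thm:diff_char}(2) go through.
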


\begin{proof}
Assume that $\dim\NA_2(V)\leq d_V(d_V+1)/2$. Then $\rho$ is a linear character by
Theorem \ref{thm:YD}. Lemma \ref{lem:rho(g)=-1} yields that $\rho(x_1)=-1$. Equations 
$\rho(x_8)=\rho(x_9)=\pm1$ follow from Lemma \ref{lem:C_cen}. The remaining claim in $(1)$ 
follows from part $(2)$. 

The statement $(2)(a)$ was first given in \cite{MR1667680} in the case that $\rho(x_8)=1$ and 
in \cite{zoo} in the case that $\rho(x_8)=-1$. For a proof for $\fie=\ndC$ 
see for example \cite[Theorem 2.4]{GG2009}. For other fields proceed as in the proof of 
Proposition \ref{pro:T_main}. The evaluation of the product 
\begin{equation*}
\begin{aligned}
&\partial_{v_4} \partial_{v_1} \partial_{v_3} \partial_{v_5} \partial_{v_4} 
\partial_{v_2} \partial_{v_5} \partial_{v_4} \partial_{v_6} \partial_{v_7} 
\partial_{v_3} \partial_{v_9} \partial_{v_6} \partial_{v_{10}} \partial_{v_6} 
\partial_{v_5} \partial_{v_{10}} \partial_{v_6} \partial_{v_9}
\partial_{v_6}\times\\
&\quad\quad\partial_{v_7} \partial_{v_5} \partial_{v_6} \partial_{v_9} \partial_{v_8} 
\partial_{v_{10}} \partial_{v_7} \partial_{v_6} \partial_{v_5} \partial_{v_3} 
\partial_{v_5} \partial_{v_7} \partial_{v_9} \partial_{v_8} \partial_{v_{10}} 
\partial_{v_7} \partial_{v_{10}} \partial_{v_8} \partial_{v_{10}} \partial_{v_9}
\end{aligned}
\end{equation*}
of derivations at the monomial given in \eqref{eq:integralC} gives $\rho(x_8)$. 
Hence the monomial in \eqref{eq:integralC} is an integral.
\end{proof}

\subsection{Affine Racks}


Let $X$ be one of the affine racks listed in Table \ref{tab:racks}. 

\begin{lem}
The centralizer of $x_1$ in $G_X$ is the cyclic group
generated by $x_1$. 
\end{lem}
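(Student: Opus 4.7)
The plan is to invoke Lemma~\ref{lem:centralizer}\eqref{lem:centralizer_cy}, reducing the statement to showing that $C_{\overline{G_X}}(\pi x_1)$ is cyclic and generated by $\pi x_1$. This is the same strategy used for $\mathbb{D}_3$ in Lemma~\ref{lem:D3_cen}, and the affine rack case is a natural extension.

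The first step is to identify $\overline{G_X}$. Write $X = \mathrm{Aff}(q,\alpha)$ with $\alpha$ of multiplicative order $n$ in $\mathbb{F}_q^\times$. The inner group $\Inn X$ is the one-dimensional affine group $\mathbb{F}_q \rtimes \langle \alpha \rangle$ of order $qn$, since every translation $y \mapsto y + c$ can be written as $\varphi_i \varphi_0^{-1}$ for suitable $i$. By Corollary~\ref{cor:Inn(X)} and Lemma~\ref{lem:central} there is a canonical surjection $\overline{G_X} \twoheadrightarrow \Inn X$, and I would show it is an isomorphism by bounding $|\overline{G_X}| \le qn$. For this one rewrites every word in the $\pi x_i$ using the rack relations $\pi x_i \pi x_j = \pi x_{i \trid j} \pi x_i$ to pull all occurrences of $\pi x_0$ to the right, exploiting $(\pi x_i)^n = (\pi x_1)^n = 1$ (a consequence of Lemmas~\ref{lem:central} and~\ref{lem:powers}) to cap consecutive repetitions at $n-1$. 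Each element then has a normal form $t \cdot (\pi x_0)^k$ with $0 \le k < n$, where $t$ lies in the subgroup generated by the ``translations'' $\pi x_i (\pi x_0)^{-1}$; since conjugation by $\pi x_0$ permutes these generators via multiplication by $\alpha$, their subgroup has at most $q$ elements, giving the desired bound.

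Once $\overline{G_X} \cong \mathbb{F}_q \rtimes \langle \alpha \rangle$ is established, the centralizer of $\pi x_1$ is computed directly in the affine group: the map $\varphi_1 : y \mapsto \alpha y + (1-\alpha) \cdot 1$ has $y = 1$ as its unique fixed point, so any element commuting with $\varphi_1$ must permute its fixed-point set and therefore fix $1$. The stabilizer of $1$ in $\mathbb{F}_q \rtimes \langle \alpha \rangle$ consists of the maps $y \mapsto \alpha^k(y-1) + 1$ for $0 \le k < n$, which is the cyclic group of order $n$ generated by $\varphi_1 = \pi x_1$ itself. Applying Lemma~\ref{lem:centralizer}\eqref{lem:centralizer_cy} then yields $C_{G_X}(x_1) = \langle x_1 \rangle$.

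The main obstacle is the identification $\overline{G_X} \cong \Inn X$, equivalently the equality $Z(G_X) = \langle x_1^n \rangle$. This is a nontrivial feature of affine racks: in other cases of Table~\ref{tab:racks} extra elements in $C_{G_X}(x_1)$ appear (for instance $x_4 x_2$ in $\mathcal{T}$, see Lemma~\ref{lem:T_cen}), so the normal-form argument above must genuinely use the affine structure (the fact that conjugation by $\pi x_0$ on the translation subgroup generates the full cyclic action of $\langle \alpha \rangle$, forcing all extra commuting elements to collapse). As the paper notes, the finiteness of $\overline{G_X}$ combined with the explicit presentation makes this computation straightforward in \textsf{GAP}, and the four cases $q \in \{5,7\}$, $\alpha \in \{2,3\}$ or $\{3,5\}$ respectively can also be verified by hand.
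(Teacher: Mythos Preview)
Your approach is correct and matches the paper's own proof almost exactly: the paper also asserts that $\overline{G_X}$ is the affine group $C_q\rtimes C_n$ (stated for $\mathrm{Aff}(5,2)$ as $C_5\rtimes C_4$, with the other cases declared similar), observes that the centralizer of $\pi x_1$ there is the cyclic group $\langle\pi x_1\rangle$, and then invokes Lemma~\ref{lem:centralizer}\eqref{lem:centralizer_cy}. Your write-up simply fills in details the paper leaves implicit, in particular the normal-form bound for $|\overline{G_X}|$ and the fixed-point computation of the centralizer in the affine group.
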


\begin{proof}
Assume that $X=\mathrm{Aff}(5,2)$. 
Then 
$\overline{G_X}$ is isomorphic to the affine group $C_5\rtimes C_4$. 
The centralizer in $\overline{G_X}$ of $\pi x_1$ in the cyclic subgroup generated 
by $\pi x_1$. Then the claim follows from Lemma \ref{lem:centralizer}.
The proof for the other affine racks is similar.
\end{proof}

\begin{prop}
\label{pro:aff_main}
Let $\rho$ be an absolutely simple representation of $C_{G_X}(x_1)$. Let $V=M(x_1,\rho)$ 
and $d_V=\dim V$.
\begin{enumerate}
\item The representation $\rho$ is a linear character on $C_{G_X}(x_1)$ and hence $d_V=\#X$. 
Moreover, $\dim\NA_2(V)\leq d_V(d_V+1)/2$ if and only if $\rho(x_1)=-1$. 
\item Assume that $X=\mathrm{Aff}(5,2)$ or $X=\mathrm{Aff}(5,3)$ and that $\rho(x_1)=-1$. Then the following hold.
\begin{enumerate}
\item $\mathcal{H}_{\NA(V)}(t)=(4)^4_t (5)_t$ and $\dim\NA(V)=1280$.
\item Let $v_i\in V_{x_i}$ with $i\in X$ be non-zero elements. Then 
\[
v_1v_2v_1v_2v_3v_1v_2v_1v_3v_1v_4v_1v_4v_2v_3v_5
\]
is an integral of $\NA(V)$ for $X=\mathrm{Aff}(5,2)$ and 
\[
v_1 v_2 v_4 v_3 v_2 v_4 v_5 v_2 v_1 v_3 v_2 v_4 v_3 v_1 v_2 v_4 
\]
is an integral of $\NA(V)$ for $X=\mathrm{Aff}(5,3)$. 
\end{enumerate}
\item Assume that $X=\mathrm{Aff}(7,3)$ or $X=\mathrm{Aff}(7,5)$ and that $\rho(x_1)=-1$. Then the following hold.
\begin{enumerate}
\item $\mathcal{H}_{\NA(V)}(t)=(6)^6_t (7)_t$ and $\dim\NA(V)=326592$.
\item Let $v_i\in V_{x_i}$ with $i\in X$ be non-zero elements. Then 
\begin{equation}
\label{eq:integral_aff73}
\begin{aligned}
&v_1 v_2 v_1 v_3 v_1 v_2 v_1 v_3 v_1 v_2 v_1 v_3 v_4 v_2 v_1 v_4 v_2 v_3\times\\
&\quad\quad v_4 v_2 v_1 v_5 v_1 v_3 v_1 v_2 v_1 v_3 v_4 v_2 v_3 v_5 v_1 v_6 v_4 v_7
\end{aligned}
\end{equation}
is an integral of $\NA(V)$ for $X=\mathrm{Aff}(7,3)$ and
\begin{equation}
\label{eq:integral_aff75}
\begin{aligned}
&v_6 v_7 v_6 v_5 v_6 v_7 v_5 v_6 v_5 v_7 v_6 v_5 v_4 v_5 v_6 v_4 v_5 v_7 \times\\
&\quad\quad v_6 v_5 v_7 v_3 v_7 v_6 v_2 v_3 v_4 v_2 v_4 v_3 v_5 v_4 v_3 v_2 v_1 v_2
\end{aligned}
\end{equation}
is an integral of $\NA(V)$ for $X=\mathrm{Aff}(7,5)$. 
\end{enumerate}
\end{enumerate}
\end{prop}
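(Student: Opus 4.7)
The argument parallels the proofs of Propositions \ref{pro:A_main}, \ref{pro:B_main} and \ref{pro:C_main}, with the simplifying feature that the centralizer of $x_1$ is cyclic in all four cases.

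For part (1), the preceding lemma identifies $C_{G_X}(x_1)$ as the cyclic group $\langle x_1\rangle$. Being abelian, it admits only one-dimensional absolutely simple representations, so $\rho$ is a linear character and $d_V=[G_X:C_{G_X}(x_1)]=\#X$. The implication ``$\dim\NA_2(V)\leq d_V(d_V+1)/2\Rightarrow \rho(x_1)=-1$'' is Lemma \ref{lem:rho(g)=-1}. The converse is a direct consequence of parts (2) and (3), since the coefficient of $t^2$ in each stated Hilbert series $(4)^4_t(5)_t$ or $(6)^6_t(7)_t$ equals exactly $d_V(d_V+1)/2$ (namely $15$ for $d_V=5$ and $28$ for $d_V=7$).

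For parts (2) and (3), I would first handle the case $\fie=\ndC$. The Nichols algebras of the four affine racks with $\rho(x_1)=-1$ have already been studied and their Hilbert series computed in the literature, see for instance \cite{MR1800714, MR1994219}. Lemma \ref{lem:same_char} then transfers the conclusion to every field of characteristic zero. For a field of positive characteristic $p$, the hypotheses of Theorem \ref{thm:diff_char} are met with $\fie=\ndQ$: the centralizer $C_{G_X}(x_1)=\langle x_1\rangle$ is generated by an element on which $\rho$ acts as $-1$, so $\rho$ takes values in $\{-1,1\}$ automatically. Theorem \ref{thm:diff_char}(2) thus reduces the positive-characteristic case to exhibiting a single nonzero element in the top degree of $\NA(V_\ndQ)$ whose reduction modulo $p$ remains nonzero in $\NA(\tau_pV)$; this is exactly the role of the integrals displayed in the statement.

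The integrality of each listed monomial would be verified by exhibiting a composition of skew-derivations $\partial_{v_{i_1}}\partial_{v_{i_2}}\cdots$ that evaluates the monomial to a nonzero scalar in $\fie$, exactly as was done in the proofs of Propositions \ref{pro:A_main}--\ref{pro:C_main}. By the Leibniz rule recalled in Section \ref{YD}, this is a finite (if tedious) calculation that can be automated with computer algebra, for example via the package \textsf{RIG} for \textsf{GAP} \cite{GAP, RIG}. The main obstacle is the bookkeeping involved in producing a correct derivation sequence, since after each application of a $\partial_{v_i}$ one must track how the remaining derivations interact with the coaction of $\fie G_X$; once such a sequence is produced it simultaneously certifies nonvanishing in top degree and yields the claimed Hilbert series and dimension.
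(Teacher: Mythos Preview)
Your approach is essentially the same as the paper's: cyclic centralizer forces $\rho$ to be a linear character, Lemma~\ref{lem:rho(g)=-1} gives the forward direction of (1), the converse follows from (2) and (3), the case $\fie=\ndC$ is taken from the literature, and other fields are handled via Lemma~\ref{lem:same_char} and Theorem~\ref{thm:diff_char} (which is exactly what ``proceed as in the proof of Proposition~\ref{pro:T_main}'' means in the paper).

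Two minor points. First, your literature pointers are slightly off: the $\mathrm{Aff}(5,\cdot)$ cases are in \cite[Theorem 6.16]{MR1994219}, while the $\mathrm{Aff}(7,\cdot)$ cases are in \cite{zoo}; \cite{MR1800714} treats Coxeter-type examples, not the affine racks. Second, the paper does not leave the integral verification as a black-box computer check but records explicit derivation sequences and their scalar values ($1$, $1$, $1$, $-1$ respectively). This matters because Theorem~\ref{thm:diff_char}(2) requires nonvanishing of the top-degree element \emph{after reduction modulo every prime $p$}; knowing the scalar is $\pm 1$ over $\ndQ$ is precisely what guarantees this uniformly, whereas an unspecified nonzero rational would not. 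Your sketch is correct in spirit, but to complete it you must actually exhibit such sequences and confirm the resulting scalars are units in $\ndZ$.
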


\begin{proof}
Assume that $\dim\NA_2(V)\leq d_V(d_V+1)/2$. Since $C_{G_X}(x_1)$ is abelian, the representation 
$\rho$ is a linear character. Lemma \ref{lem:rho(g)=-1} yields that $\rho(x_1)=-1$. 
The remaining claim in $(1)$ follows from parts $(2)$ and $(3)$. 

The statement $(2)$ was proved in \cite[Theorem 6.16]{MR1994219} in the case that $X=\mathrm{Aff}(5,2)$ and 
$\fie=\ndC$. In the proof of \cite[Theorem 6.16]{MR1994219} it was shown that
the evaluation of the product 
\[
\partial_{v_3} \partial_{v_1} \partial_{v_3} \partial_{v_1} \partial_{v_2} 
\partial_{v_1} \partial_{v_2} \partial_{v_3} \partial_{v_4} \partial_{v_3} 
\partial_{v_5} \partial_{v_3} \partial_{v_4} \partial_{v_5} \partial_{v_4} 
\partial_{v_5}
\]
of derivations at $v_1v_2v_1v_2v_3v_1v_2v_1v_3v_1v_4v_1v_4v_2v_3v_5$ gives $1$.
Thus for fields $\fie\ne\ndC$ the claim can be proved as in the proof of 
Proposition \ref{pro:T_main}. The case $X=\mathrm{Aff}(5,3)$ is similar. 
The evaluation of the product 
\[
\partial_{v_2} \partial_{v_1} \partial_{v_2} \partial_{v_1} \partial_{v_3} 
\partial_{v_1} \partial_{v_4} \partial_{v_3} \partial_{v_5} \partial_{v_3} 
\partial_{v_5} \partial_{v_4} \partial_{v_3} \partial_{v_5} \partial_{v_3} 
\partial_{v_5}
\]
of derivations at $v_1 v_2 v_4 v_3 v_2 v_4 v_5 v_2 v_1 v_3 v_2 v_4 v_3 v_1 v_2 v_4$ gives $1$.

The statement $(3)$ was proved in \cite{zoo} in the case that $\fie=\ndC$. 
For other fields proceed as in the proof of 
Proposition \ref{pro:T_main}. 
For $X=\mathrm{Aff}(7,3)$ the evaluation of the product 
\begin{equation}
\begin{aligned}
&\partial_{v_2} \partial_{v_1} \partial_{v_2} \partial_{v_3} \partial_{v_4} 
\partial_{v_5} \partial_{v_3} \partial_{v_4} \partial_{v_2} \partial_{v_4} 
\partial_{v_3} \partial_{v_2} \partial_{v_6} \partial_{v_7} \partial_{v_3} 
\partial_{v_7} \partial_{v_5} \partial_{v_6} \times\\
&\quad\quad\partial_{v_7} \partial_{v_5} \partial_{v_4} \partial_{v_6} \partial_{v_5} \partial_{v_4} \partial_{v_5} 
\partial_{v_6} \partial_{v_7} \partial_{v_5} \partial_{v_6} \partial_{v_5} 
\partial_{v_7} \partial_{v_6} \partial_{v_5} \partial_{v_6} \partial_{v_7} 
\partial_{v_6}
\end{aligned}
\end{equation}
of derivations at the monomial given in \eqref{eq:integral_aff73} gives $1$.

For $X=\mathrm{Aff}(7,5)$ the evaluation of the product 
\begin{equation*}
\begin{aligned}
&\partial_{v_6} \partial_{v_2} \partial_{v_7} \partial_{v_1} \partial_{v_2} \partial_{v_1} 
\partial_{v_3} \partial_{v_6} \partial_{v_5} \partial_{v_4} \partial_{v_1} \partial_{v_2} 
\partial_{v_1} \partial_{v_3} \partial_{v_1} \partial_{v_5} \partial_{v_1} \partial_{v_2} \times\\
&\quad\quad\partial_{v_4} \partial_{v_2} \partial_{v_6} \partial_{v_1} \partial_{v_3} \partial_{v_4} 
\partial_{v_3} \partial_{v_5} \partial_{v_7} \partial_{v_4} \partial_{v_3} \partial_{v_1} 
\partial_{v_4} \partial_{v_1} \partial_{v_3} \partial_{v_1} \partial_{v_2} \partial_{v_1}
\end{aligned}
\end{equation*}
of derivations at the monomial given in \eqref{eq:integral_aff75} gives $-1$.
\end{proof}

\begin{appendix}
\section{An application of the elementary divisors theorem}

The following lemma, which is an application of the elementary divisors theorem, is needed for
the proof of Theorem \ref{thm:diff_char}.

\begin{lem}
\label{lem:ed}
Let $V$ be a finite-dimensional vector space over $\ndQ$ and let $U\subseteq V$ be a subspace. 
Let $W$ be the free $\ndZ$-module generated by a basis of $V$ and let $U_\ndZ=U\cap W$.
Then for all primes $p$ the quotient 
$U_\ndZ/pU_\ndZ$ is a vector space over $\mathbb{F}_p$ of dimension $\dim U$.
\end{lem}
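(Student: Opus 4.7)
The plan is to reduce the statement to the fact that $U_\ndZ$ is a free $\ndZ$-module of rank equal to $\dim_\ndQ U$. Once this is known, the conclusion is immediate: picking a $\ndZ$-basis of $U_\ndZ$ identifies $U_\ndZ\cong\ndZ^r$ with $r=\dim_\ndQ U$, and reduction modulo $p$ yields $U_\ndZ/pU_\ndZ\cong(\ndZ/p\ndZ)^r=\mathbb{F}_p^{\dim U}$ as an $\mathbb{F}_p$-vector space.

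First I would establish that $U_\ndZ$ is a free $\ndZ$-module of finite rank. Since $W$ is a free $\ndZ$-module of rank $n:=\dim_\ndQ V$ and $\ndZ$ is a principal ideal domain, every submodule of $W$ is free of rank at most $n$ (this is one form of the elementary divisors theorem: there exist a $\ndZ$-basis $f_1,\dots,f_n$ of $W$ and integers $d_1,\dots,d_r$ such that $d_1 f_1,\dots,d_r f_r$ form a $\ndZ$-basis of $U_\ndZ$). Applied to the subgroup $U_\ndZ=U\cap W\subseteq W$, this yields $U_\ndZ\cong\ndZ^r$ for some $r\leq n$.

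Next I would show that $r=\dim_\ndQ U$ by verifying $U=\ndQ\cdot U_\ndZ$. Given $u\in U$, expand $u=\sum c_i e_i$ with $c_i\in\ndQ$ in the fixed $\ndZ$-basis $(e_i)$ of $W$, and clear denominators: there is a positive integer $N$ with $Nu\in W$, so $Nu\in U\cap W=U_\ndZ$. Hence every element of $U$ is a $\ndQ$-multiple of an element of $U_\ndZ$, so any $\ndZ$-basis of $U_\ndZ$ is a $\ndQ$-spanning set of $U$; $\ndQ$-linear independence of this basis follows from its $\ndZ$-linear independence and the torsion-freeness of $W$. Therefore $r=\dim_\ndQ U$, and the claim follows as indicated in the first paragraph.

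There is no real obstacle here: the argument is entirely standard linear algebra over a PID, and the only point requiring care is the identification $U=\ndQ\cdot U_\ndZ$, which is handled by clearing denominators.
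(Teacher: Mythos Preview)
Your proof is correct and follows essentially the same approach as the paper: invoke the elementary divisors theorem to see that $U_\ndZ$ is free of some rank $r$, then identify $r=\dim_\ndQ U$ by clearing denominators. The paper additionally observes that the elementary divisors are all equal to $1$ (so $U_\ndZ$ is in fact a direct summand of $W$), but this extra fact is not needed for the stated conclusion.
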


\begin{proof}
Let $d=\dim V$. By the elementary divisors theorem, see \cite[Ch. III, Theorem 7.8]{L}, there exist a basis 
$\{e_1,\dots,e_d\}$ of $W$, an integer $m\in\ndN$ and integers $a_1,\dots,a_m>0$ such that 
$\{a_1e_1,\dots,a_me_m\}$ is a basis of $U_\ndZ$. Since $U$ is a vector space, it follows that 
$a_i=1$ for all $i\in\{1,\dots,m\}$. We conclude that $\{e_1,\dots,e_m\}$ is a basis of $U$ by 
definition of $U_\ndZ$. Thus $\dim(U_\ndZ/pU_\ndZ)=\dim U=m$.
\end{proof}

\end{appendix}

\begin{acknowledgement*}
I.H. was supported by the German Research Foundation via a Heisenberg fellowship. 
M.G. and L.V. was supported by CONICET. 
The third author thanks DAAD and Philipps-Universit\"at Marburg for the support 
of his visit in Marburg from December 2009 to March 2010.
\end{acknowledgement*}


\def\cprime{$'$}

\end{document}